%% file: ParaFreeHP_Algs_FinalDraft.tex
\documentclass{article}
\usepackage{graphicx} 
\input{my_preamble}

\usepackage{algorithm,hyperref}
\usepackage{algpseudocode}
\usepackage{float}    
\usepackage{booktabs} 
\usepackage{siunitx}  
\usepackage{wrapfig}

\title{Provable Parameter-Free Fixed-Point Algorithms with Linear Convergence Rates}
\author{
Quoc Tran-Dinh$^{*}$, Pham Ngoc Anh$^{\dagger}$, and Ha Manh Tien$^{\ddagger}$\\
$^{*}$Department of Statistics and Operations Research\\The University of North Carolina at Chapel Hill, USA; \\
$^{\dagger}$Laboratory of Applied Mathematics and Computing, \\
Posts and Telecommunications Institute of Technology, Hanoi, Vietnam;\\
$^{\ddagger}$Faculty of Basic Sciences and Foreign Languages,\\
Fire and Rescue Academy, Hanoi, Vietnam.
}
\date{June 2026}

\begin{document}

\maketitle
\begin{abstract}
In this paper, we develop provable parameter-free and adaptive fixed-point algorithms for contractive mappings, with an emphasis on automatically exploiting hidden contractivity without requiring prior knowledge of the contraction factor. 
Our first method is a completely parameter-free variant of the Halpern fixed-point iteration. 
It requires no line search, bisection, or prior estimate of the contraction factor, while retaining essentially the same per-iteration computational cost as classical fixed-point schemes. 
We establish explicit linear convergence rates for both the fixed-point residual and the distance to the unique fixed point. 
The second algorithm is an adaptive Halpern method that requires only an upper bound on the contraction factor and reduces to an existing adaptive Halpern scheme in the nonexpansive case. 
This method also enjoys explicit linear convergence guarantees. 
We further extend these ideas in two directions. 
First, by combining the proposed fixed-point schemes with Tikhonov regularization, we obtain a parameter-free method for solving co-coercive equations and establish an iteration complexity of $\mathcal{O}({\epsilon^{-1}\ln(\epsilon^{-1})})$ for computing an $\epsilon$-solution. 
Second, using the relation between Halpern iterations and Nesterov's accelerated fixed-point schemes, we derive parameter-free Nesterov's accelerated variants that inherit linear convergence in the contractive setting.
Numerical experiments on several examples demonstrate that the proposed algorithms are competitive with, and often outperform, existing adaptive fixed-point methods. In particular, the methods successfully exploit contractive behavior when it is present while remaining effective on nonexpansive problems. 
\end{abstract}

\section{Introduction}\label{sec:intro}
Developing parameter-free algorithms for optimization and related problems such as minimax optimization and variational inequalities has attracted renewed interest in recent years, largely due to their strong empirical performance across a broad range of applications, particularly in machine learning and data science \cite{Barzilai1988,carmon2022making,defazio2022parameter,defazio2023learning,Duchi2011,fercoq2019adaptive,ito2023parameter,KingmaB14,lan2026optimal,li2025simple,malitsky2020adaptive,orabona2016coin}. 
In contrast, the development of parameter-free methods for fixed-point problems has received considerably less attention. 
Although several parameter-free fixed-point algorithms have recently been proposed, their practical performance has been investigated only to a limited extent, with relatively few numerical studies reported in the literature (see, e.g., \cite{alakoya2021modified,diakonikolas2020halpern,lv2026preconditioned,malitsky2020forward,ogwo2025inertial,shen2026parameter,tan2022self}). 
Moreover, many of these methods still lack rigorous theoretical guarantees. 
Most existing results establish only asymptotic convergence, without providing explicit convergence rates or iteration-complexity estimates. 
This noticeable gap between the optimization and fixed-point literature motivates the present work.

\vspace{0.75ex}
\noindent\textbf{$\mathrm{(a)}$~Our motivation and research question.}
Apart from the growing success of parameter-free methods in optimization, our second motivation stems from a practical observation. 
In many applications of fixed-point methods, the underlying mapping is simply assumed to be nonexpansive, and algorithms designed for nonexpansive mappings are subsequently employed to approximate its fixed point. 
In practice, however, it is often difficult, and sometimes impossible, to determine whether the mapping is actually contractive, even locally. 
If the mapping is indeed contractive, then one would naturally expect an algorithm to enjoy a linear convergence rate. 
In contrast, for general nonexpansive mappings, the best possible convergence rate is typically no better than $\BigOs{1/k}$, which is significantly slower than linear convergence. 
This naturally leads to the following question:
\begin{center}
\textit{\textbf{Can we develop efficient parameter-free algorithms that automatically exploit hidden contractivity without requiring prior knowledge of the contraction factor?}}
\end{center}
To further investigate this question, we conduct a preliminary experiment on two representative mappings: (i) a linear mapping $T(x)=Q_1x+q_1$ and (ii) a nonlinear mapping $T(x)=c\arctan(Q_2x+q_2)$, where $c=0.99$, and $(Q_1,q_1)$ and $(Q_2,q_2)$ are chosen such that $T$ is $\rho$-contractive for some $\rho\in(0,1)$, but $\rho$ is close to one. 
We compare the seven adaptive algorithms proposed in this paper with Algorithm~3.1 of \cite{he2024convergence}, which is designed for nonexpansive mappings, and the geometric Halpern fixed-point method of \cite{park2022exact}, which assumes that the exact contraction factor $\rho$ is known.

Figure~\ref{fig:paired_convergence} plots the relative residual norms against the number of iterations. 
Here, the geometric Halpern method refers to the algorithm in \cite{park2022exact}, which uses the exact value of the contraction factor $\rho$.

\begin{figure}[!h]
	\centering
	\includegraphics[width=\textwidth]{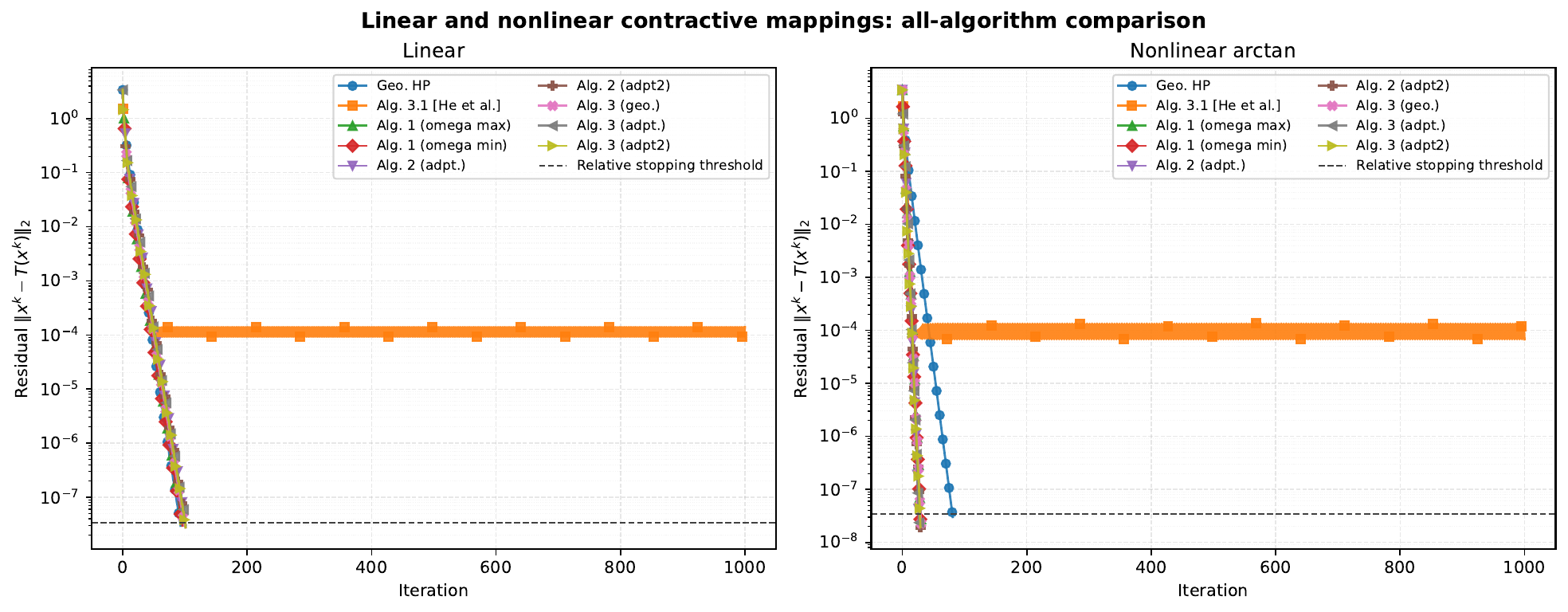}
	\caption{Convergence behavior of the nine algorithms for approximating a fixed point of $T$ on linear and nonlinear contractive mappings.}
	\label{fig:paired_convergence}
\end{figure}

As illustrated in Figure~\ref{fig:paired_convergence}, all seven of our proposed methods exhibit a clear linear convergence rate on both the linear and nonlinear test problems, reaching a relative residual tolerance of $10^{-8}$. 
The \texttt{Geo.~HP} is a Halpern fixed-point scheme using known contractive parameter $\rho$ also showed its linear convergence rate as theoretically stated in \cite{park2022exact}.
In contrast, Algorithm~3.1 of \cite{he2024convergence} initially displays a similar linear convergence trend but slows down considerably once the relative residual reaches approximately $10^{-4}$. 
These experiments indicate that our methods successfully exploit the hidden contractivity of the underlying mapping, thereby maintaining the expected linear convergence behavior while continuing to make steady progress toward high-accuracy solutions.

One may argue that, whenever $T$ is contractive, it is sufficient to apply the classical Banach-Picard (BP) iteration, which is guaranteed to converge linearly to the unique fixed point. 
While this is true under the assumption of global contractivity, such an assumption is often difficult to verify in practice. 
In many applications, it is unclear whether the underlying mapping is globally contractive, only locally contractive, or merely nonexpansive. 
Consequently, the BP iteration may fail to converge when initialized outside the region where contractivity holds. 
By contrast, the Halpern and Krasnosel'ski\v{\i}-Mann (KM) fixed-point iterations remain convergent under the much weaker assumption that $T$ is nonexpansive, making them substantially more robust in situations where the contractive behavior of $T$ is unknown. 
This observation motivates our focus on developing parameter-free Halpern- and KM-type methods that automatically exploit hidden contractivity whenever it is present while retaining the robustness of algorithms designed for nonexpansive mappings.

\vspace{0.75ex}
\noindent\textbf{$\mathrm{(b)}$~Problem statements.}
Let $T : \Hil \to \Hil$ be a mapping defined on a Hilbert space $\Hil$.
We are interested in approximating a fixed point of $T$, namely:
\begin{equation}\label{eq:FP}
\textrm{Find $x^{\star}\in \Hil$ such that:}~ x^{\star} = T(x^{\star}).
\tag{FP}
\end{equation}
Denote by $\mathrm{Fix}(T) := \sets{x^{\star} \in \Hil : x^{\star} = T(x^{\star}) }$ the set of fixed points of $T$.

Alternatively, we also consider the following root-finding problem:
\begin{equation}\label{eq:CE}
\textrm{Find $x^{\star}\in \Hil$ such that:}~ G(x^{\star}) = 0.
\tag{CE}
\end{equation}
where $G : \Hil \to \Hil$ is a single-valued mapping.
Denote by $\mathrm{zer}(G) := \sets{x^{\star} \in \Hil : G(x^{\star}) = 0}$ the solution set of \eqref{eq:CE}.
Throughout this paper, we assume that both $\mathrm{Fix}(T)$ and $\mathrm{zer}(G)$ are nonempty.

Problems \eqref{eq:FP} and \eqref{eq:CE} are closely related. Indeed, if $x^{\star} \in \mathrm{Fix}(T)$, then $x^{\star} \in \mathrm{zer}(\Id - T)$. 
Conversely, if $x^{\star} \in \mathrm{zer}(G)$, then $x^{\star} \in \mathrm{Fix}(\Id - \eta G)$ for any $\eta > 0$. 
Consequently, the fixed-point problem \eqref{eq:FP} can be viewed as an equivalent form of the root-finding problem \eqref{eq:CE}.

To develop numerical methods for solving \eqref{eq:FP}, it is common to assume that $T$ is either nonexpansive or $\rho$-contractive for some $\rho \in (0,1)$, see Section~\ref{sec:prelim_results} for definitions. 
Throughout this paper, we unify these two settings by referring to $T$ as a $\rho$-contractive mapping with $\rho \in (0,1]$, where the case $\rho=1$ corresponds to nonexpansive mappings. 
For the root-finding problem \eqref{eq:CE}, a standard assumption is that $G$ is $\beta$-co-coercive, i.e., $\iprods{G(x) - G(y), x - y} \geq \beta\norms{G(x) - G(y)}^2$ for all $x, y\in\Hil$.
In this case, we refer to \eqref{eq:CE} as a co-coercive equation.

The notions of nonexpansiveness and cocoercivity are closely connected. 
It is well-known that if $G$ is $\beta$-co-coercive, then $T := \Id - 2\beta G$ is nonexpansive. 
Conversely, if $T$ is nonexpansive, then $G := \Id - T$ is $\frac{1}{2}$-co-coercive. 
Therefore, solving a co-coercive equation is equivalent to approximating a fixed point of a nonexpansive mapping (or more generally, an averaged mapping), see \cite{Bauschke2011}.

For the fixed-point problem~\eqref{eq:FP}, if $T$ is globally contractive, i.e., $\norms{T(x) - T(y)} \leq \rho\norms{x - y}$ for all $x, y \in \Hil$ with $\rho \in (0, 1)$, then the classical Banach--Picard (BP) iteration \cite{Banach1922} converges linearly to the unique fixed point. 
When $T$ is only nonexpansive, two classical fixed-point methods are commonly employed. 
The first is the Krasnosel'ski\v{\i}-Mann (KM) iteration \cite{Krasnoselskii1955,mann1953mean}, which achieves the sublinear convergence rate $\BigOs{1/\sqrt{k}}$ in terms of the residual norm $\norms{x^k - T(x^k)}$. 
The second is the Halpern fixed-point iteration \cite{halpern1967fixed}, which is the primary focus of this paper.

For convenience, we first briefly recall the Halpern fixed-point iteration from \cite{halpern1967fixed}.
Starting from an initial point $x^0 \in \Hil$ and an anchor point $u\in \Hil$ (which may coincide with $x^0$), at each iteration $k \geq 0$, the Halpern fixed-point iteration updates
\begin{equation}\label{eq:HP_iteration_rule}
x^{k+1} = \lambda_k u + (1-\lambda_k)T(x^k),
\end{equation}
where $\lambda_k \in (0, 1)$ is a relaxation parameter. Various choices of $\lambda_k$ have been proposed to guarantee the convergence of \eqref{eq:HP_iteration_rule}, see, e.g., \cite{he2024convergence}. 
More recently, \cite{lieder2021convergence,sabach2017first} showed that the choice $\lambda_k = \frac{1}{k+2}$ yields the non-asymptotic convergence rate $\BigOs{1/k}$ for the residual norm $\norms{x^k - T(x^k)}$. 
Furthermore, this rate is known to be optimal, see, for example, \cite{lieder2021convergence,park2022exact}.

The development of parameter-free and adaptive Halpern methods is relatively recent. 
Diakonikolas \cite{diakonikolas2020halpern} appears to be among the first to propose a parameter-free variant of \eqref{eq:HP_iteration_rule} for solving the co-coercive equation \eqref{eq:CE}. 
However, the proposed method still relies on a line-search procedure usually used in optimization and VIPs. 
More recently, He et al.~\cite{he2024convergence} proposed the following simple adaptive update rule for $\lambda_k$ in \eqref{eq:HP_iteration_rule}, which avoids both line search and bisection:
\begin{equation}\label{eq:adaptive_stepsize}
\lambda_k = \frac{1}{1+\varphi_k}, \quad\text{where} \quad \varphi_k = 1 + \frac{2\iprods{x^{k-1} - T(x^{k-1}), x^0 - x^{k-1}}}{\norms{x^{k-1} - T(x^{k-1})}^2}.
\end{equation}
Under this update rule, the authors established a non-asymptotic convergence rate of $\BigOs{1/k}$ for the residual norm and proved that the sequence $\sets{x^k}$ converges strongly to the projection of $x^0$ onto the fixed-point set. 
However, this adaptive rule is intentionally and specifically designed for nonexpansive mappings and therefore still achieves only a sublinear convergence rate.

Motivated by and complementing the work of \cite{diakonikolas2020halpern,he2024convergence}, we develop in this paper two new algorithms for approximating a fixed point of \eqref{eq:FP} when $T$ is assumed to be $\rho$-contractive, though we do not need to know $\rho$. 
The first algorithm is inspired by the geometric Halpern method of \cite{park2022exact}, while the second builds upon the adaptive parameter update introduced in \cite{he2024convergence}. 
Both methods are designed to exploit the contractivity of $T$ and are shown to achieve explicit linear convergence rates.

\vspace{0.75ex}
\noindent\textbf{$\mathrm{(c)}$~Our contributions.}
To this end, the main contributions of this paper are summarized as follows.

\begin{compactitem}
\item[$\mathrm{(i)}$]
We propose a new parameter-free Halpern fixed-point algorithm for approximating a fixed point of \eqref{eq:FP}, where $T$ is a $\rho$-contractive mapping with $\rho \in (0,1)$. 
The proposed method does not require any prior knowledge of the contraction factor $\rho$. 
We establish explicit linear convergence rates both for the fixed-point residual $\norms{x^k - T(x^k)}$ and the distance to the fixed point $\norms{x^k - x^{\star}}$.

\item[$\mathrm{(ii)}$]
Inspired by \cite{he2024convergence}, we develop a new adaptive Halpern fixed-point algorithm for approximating a fixed point of \eqref{eq:FP}, where $T$ is a $\rho$-contractive mapping with $\rho \in (0,1]$. 
Unlike the first method, this algorithm requires only an upper bound of $\rho$ to provably guarantee a linear convergence and reduces to \cite[Algorithm~3.1]{he2024convergence} when $\rho=1$. 
We also establish explicit linear convergence rates for both the fixed-point residual $\norms{x^k - T(x^k)}$ and the distance to the fixed point $\norms{x^k - x^{\star}}$.

\item[$\mathrm{(iii)}$]
By combining the proposed fixed-point algorithms with Tikhonov regularization, we develop a new parameter-free method for solving the co-coercive equation \eqref{eq:CE} when $G$ is $\beta$-co-coercive. 
Building upon the convergence results established in (i) and (ii), we prove the convergence of the proposed method and derive the iteration complexity $\BigOs{\epsilon^{-1}\ln(\epsilon^{-1})}$ for computing an $\epsilon$-solution $x^k_{\epsilon}$ satisfying $\norms{G(x^k_{\epsilon})}\leq\epsilon$ for a given accuracy $\epsilon > 0$.

\item[$\mathrm{(iv)}$]
Exploiting the equivalence between Halpern's fixed-point iteration and Nesterov's accelerated schemes established in \cite{tran2022connection}, we derive a new parameter-free Nesterov-accelerated fixed-point algorithm for solving \eqref{eq:FP}. 
As a consequence of the results established in (i) and (ii), we prove that the proposed algorithm also enjoys a linear convergence rate whenever $\rho\in(0,1)$.
\end{compactitem}
\textbf{\textit{Discussion.}}
Let us further elaborate on the main contributions of this paper. 
First, our work complements the recent developments in \cite{diakonikolas2020halpern,he2024convergence} by establishing explicit linear convergence rates for contractive mappings. 
This distinguishes our results from many existing works, where only asymptotic convergence is available without explicit convergence rates or iteration-complexity guarantees.
Second, the proposed algorithms incur essentially the same per-iteration computational cost as the classical Halpern iteration \eqref{eq:HP_iteration_rule}, BP iterations, or KM methods. 
The additional computation consists only of a few vector operations, inner products, and norm evaluations, all of which require $\BigOs{p}$ operations.
Third, although our theoretical analysis is developed for contractive mappings, the numerical experiments in Section~\ref{sec:num_examples} demonstrate that the proposed algorithms also perform remarkably well on nonexpansive mappings. 
In particular, they consistently outperform the existing methods considered in our experiments, including \cite{he2024convergence}.
Fourth, by combining the proposed methods with Tikhonov regularization, we obtain new adaptive algorithms for solving co-coercive equations, or equivalently, fixed-point problems involving nonexpansive mappings. 
This substantially broadens the applicability of our approach to a variety of algorithms, including proximal-point, projection, and operator-splitting methods.
Fifth, our Nesterov's accelerated variants further reveal an explicit equivalence between Halpern's fixed-point iteration and Nesterov's acceleration in the context of parameter-free and adaptive algorithms.
Finally, although the numerical experiments reported in this paper are based on synthetic examples, they provide encouraging evidence that the proposed methods have the potential to be effective for a broad range of practical applications.

\vspace{0.75ex}
\noindent\textbf{$\mathrm{(d)}$~Related work.}
As mentioned earlier, parameter-free and adaptive algorithms have been extensively studied in optimization and have recently gained significant attention due to their success in machine learning and artificial intelligence, see, e.g., \cite{defazio2022parameter,defazio2023learning,Duchi2011,KingmaB14,jordan2024muon,orabona2016coin}. 
In contrast, despite several recent advances, the development of provably convergent and practically efficient parameter-free algorithms for fixed-point problems and operator theory remains relatively limited.

Over the past decades, a number of researchers have investigated adaptive and parameter-free methods for fixed-point problems, as well as root-finding methods for monotone inclusions (MIs) and variational inequality problems (VIPs). 
In fixed-point theory, several adaptive iterative schemes have been proposed. 
For example, Colao and Marino \cite{colao2015krasnoselskii} studied an adaptive variant of the Krasnosel'ski\v{\i}-Mann (KM) iteration by introducing adaptive update rules for the relaxation parameters. 
Diakonikolas \cite{diakonikolas2020halpern} proposed a parameter-free Halpern fixed-point method for solving the co-coercive equation \eqref{eq:CE}, achieving a non-asymptotic convergence rate of $\BigOs{1/k}$. 
More recently, He et al.~\cite{he2024convergence} developed an adaptive Halpern fixed-point method for solving \eqref{eq:FP} with nonexpansive mappings. 
Their method eliminates the need for line search while demonstrating promising numerical performance on several representative examples. 
Building upon this idea, Lv et al.~\cite{lv2026preconditioned} proposed a preconditioned Halpern method with adaptive parameter updates.

For monotone inclusions and variational inequality problems, numerous adaptive algorithms based on line-search techniques have been developed, comprehensive accounts can be found in the monographs \cite{Facchinei2003,Konnov2001} and some recent works such as \cite{he2018totally,oyewole2022totally}. 
In particular, Malitsky and his collaborators proposed several adaptive extragradient-type methods equipped with line-search procedures for solving these problems, see, for example, \cite{malitsky2019golden,malitsky2020forward}. 
These algorithms have subsequently found applications in minimax optimization, game theory, and machine learning, see, e.g., \cite{daskalakis2018training,ioan2023relaxed,malitsky2016first}.

\vspace{0.75ex}
\noindent\textbf{$\mathrm{(e)}$~Paper organization.}
The remainder of this paper is organized as follows.
Section~\ref{sec:prelim_results} reviews the background material and presents some preliminary results used throughout the paper.
Section~\ref{sec:linear_convergence_rate} introduces our first parameter-free Halpern fixed-point algorithm and establishes its linear convergence.
Section~\ref{sec:adaptive_Halpern_method} develops an adaptive Halpern fixed-point algorithm together with its convergence analysis.
Section~\ref{sec:NonExMapping} applies the proposed methods to solve co-coercive equations by combining them with Tikhonov regularization.
Section~\ref{sec:NesMethods} exploits the connection established in \cite{tran2022connection} to derive parameter-free Nesterov-accelerated variants of our methods.
Finally, Section~\ref{sec:num_examples} presents numerical experiments demonstrating the performance of the proposed algorithms and compares them with existing methods.

\section{Background and Preliminary Results}\label{sec:prelim_results}
We begin by reviewing the basic notation and concepts used throughout the paper, including fixed-point mappings and co-coercive equations. 
For further details, we refer to \cite{Bauschke2011,Facchinei2003}.
We then briefly recall two classical methods for approximating a fixed point of a $\rho$-contractive mapping.

\subsection{Notations and basic concepts}\label{sec:notation}
Throughout this paper, $\Hil$ denotes a Hilbert space equipped with the standard inner product $\iprod{\cdot,\cdot}$ and the induced norm $\norms{\cdot}$.
For a function $f : \Hil \to\R$, $\nabla{f}$ denotes its gradient, $\partial{f}$ its subdifferential, and $\prox_{f}$ its proximal operator.
For a matrix $\mbf{Q}\in\R^{n\times p}$, we denote by $\norms{\mbf{Q}}_2$ its spectral norm.
Given two functions $g(t)$ and $h(t)$, we write $g(t) = \BigOs{h(t)}$ if there exist constants $M > 0$ and $t_0 \geq 0$ such that $g(t) \leq Mh(t)$ for all $t \geq t_0$.
We use $\widetilde{\mcal{O}}(g(t))$ to suppress polylogarithmic factors of $g(t)$.

Let $T : \Hil \to \Hil$ be a single-valued mapping.
We say that $T$ is $\rho$-Lipschitz continuous with Lipschitz constant $\rho > 0$ if $\norms{T(x) - T(y)} \leq \rho\norms{x - y}$ for all $x, y \in \Hil$.
If $\rho \in [0, 1)$, then $T$ is called $\rho$-contractive.
If $\rho = 1$, then $T$ is called nonexpansive.
Furthermore, if $\norms{T(x) - T(y)}^2 \leq \norms{x - y}^2 - \norms{x - T(x) - (y - T(y))}^2$ for all $x, y \in \Hil$, then $T$ is called firmly nonexpansive.
We denote by $\textrm{Fix}(T) := \sets{x^{\star} \in \Hil : x^{\star} = T(x^{\star})}$ the set of fixed points of $T$.
It is well known that if $T$ is $\rho$-contractive, then $\textrm{Fix}(T)$ is a singleton.

A single-valued mapping $G : \Hil \to \Hil$ is called $\beta$-co-coercive with co-coercivity parameter $\beta > 0$ if $\iprods{G(x) - G(y), x - y} \geq \beta\norms{G(x) - G(y)}^2$ for all $x, y \in \Hil$.
If $\beta = 0$, then $G$ is called monotone.
It is well known that if $G$ is $\beta$-co-coercive, then $T := \Id - 2\beta G$ is nonexpansive.
Conversely, if $T$ is nonexpansive, then $G := \Id - T$ is $\frac{1}{2}$-co-coercive.
Moreover, every $\beta$-co-coercive mapping is also $\frac{1}{\beta}$-Lipschitz continuous.
We denote by $\mathrm{zer}(G) := \sets{x^{\star} \in \Hil : G(x^{\star}) = 0}$ the solution set of  $G(x)=0$.

\subsection{Halpern fixed-point iteration and tight convergence rates}\label{sec:HP_convergence}
Let $T : \Hil \to \Hil$ be a $\rho$-contractive mapping (also referred to as a $\rho$-Lipschitz continuous mapping) for some $\rho \in (0,1]$.
The Halpern fixed-point iteration for approximating a fixed point $x^{\star} \in \mathrm{Fix}(T)$ of $T$, i.e., $x^{\star} = T(x^{\star})$, is defined as follows.
Choose an anchor point $u \in \Hil$ and an initial point $x^0 \in \Hil$ (where $x^0$ may differ from $u$).
Then, for each iteration $k \geq 0$, update
\begin{equation}\label{eq:HP_iteration}
x^{k+1} := \lambda_k u + (1 - \lambda_k)T(x^k),
\tag{HP}
\end{equation}
where, following \cite{park2022exact}, the parameter $\lambda_k$ is chosen as
$\lambda_k = \frac{1}{1 + \varphi_k}$ with $\varphi_k := \sum_{i=1}^k\rho^{-2i}$.
For simplicity, throughout the remainder of the paper we set the anchor point to be the initial point, i.e., $u = x^0$.

The following theorem summarizes the convergence of the iteration \eqref{eq:HP_iteration}; see \cite[Corollary~3.3]{park2022exact}.

\begin{theorem}[\cite{park2022exact}]\label{th:linear_convergence_of_HP}
Let $T : \Hil \to \Hil$ be a $\rho$-contractive mapping for some $\rho \in (0, 1]$, and let $x^{\star} \in \mathrm{Fix}(T)$ be a fixed point of $T$.
Let $\sets{x^k}$ be the sequence generated by \eqref{eq:HP_iteration} with $x^0 := u$.
Then
\begin{equation}\label{eq:th11_convergence_bound}
\norms{x^k - T(x^k)} \leq \frac{(1+\rho)}{\sum_{i=0}^{k-1}\rho^{-i} }\norms{x^0 - x^{\star}} =  \frac{(1-\rho^2)\rho^{k-1}\norms{x^0 - x^{\star}}}{1 - \rho^k} \leq (1+\rho)\rho^{k-1}\norms{x^0 - x^{\star}}, \quad \forall k \geq 1.
\end{equation}
\end{theorem}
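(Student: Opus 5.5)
The plan is to follow the performance-estimation (Lyapunov) approach of \cite{park2022exact}: build a potential from the contractivity inequalities along the iterates, show it is nonincreasing, and then convert it into a bound on the residual using contractivity at $x^{\star}$. \textbf{First}, I dispose of the elementary parts of \eqref{eq:th11_convergence_bound}. For $\rho<1$ the geometric sum gives $\sum_{i=0}^{k-1}\rho^{-i} = \frac{1-\rho^k}{(1-\rho)\rho^{k-1}}$, which yields the middle expression. The last inequality follows from $1-\rho^k \geq 1-\rho$. The case $\rho=1$ is read with $\sum_{i=0}^{k-1}\rho^{-i}=k$, giving $2/k$, the known tight bound of \cite{lieder2021convergence}. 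So the real content is the first inequality.

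\textbf{Second}, I rewrite the scheme with $\varphi_0=0$, $\varphi_k=\sum_{i=1}^k\rho^{-2i}$ and $u=x^0$ as
\begin{equation*}
(1+\varphi_k)x^{k+1} = x^0 + \varphi_k T(x^k),
\end{equation*}
and note the recursion $\varphi_{k+1} = \rho^{-2}(1+\varphi_k)$. The building blocks are the contractivity inequalities
\begin{equation*}
\norms{T(x^{j+1})-T(x^j)}^2 \leq \rho^2\norms{x^{j+1}-x^j}^2, \qquad \norms{T(x^k)-x^{\star}}^2 \leq \rho^2\norms{x^k-x^{\star}}^2 .
\end{equation*}
I will define a potential of the form
\begin{equation*}
V_k := a_k\norms{x^k-T(x^k)}^2 + 2b_k\iprods{x^k-T(x^k),\, x^k-x^0},
\end{equation*}
with $b_k$ proportional to $1+\varphi_{k-1}$ and $a_k$ proportional to $(1+\varphi_{k-1})^2$ up to powers of $\rho$. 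The aim is to show $V_{k+1}\leq V_k$. This would follow from the identity that $\rho^{-2j}$ times the $j$-th consecutive contractivity inequality, after substituting the update to eliminate $x^{j+1}$, equals exactly $V_{j+1}-V_j$ plus a nonpositive term. The recursion $\varphi_{j+1}=\rho^{-2}(1+\varphi_j)$ is what should make the cross terms cancel.

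\textbf{Third}, having $V_k\leq V_1$ (with $V_1$ computed directly, since $x^1=T(x^0)$), I combine it with a suitable multiple of the inequality at $x^{\star}$. I will expand $\norms{T(x^k)-x^{\star}}^2\leq\rho^2\norms{x^k-x^{\star}}^2$ in terms of $r^k:=x^k-T(x^k)$ and $x^k-x^{\star}$, write $x^k-x^{\star}=(x^k-x^0)+(x^0-x^{\star})$, and apply Young's inequality to the term $\iprods{r^k,x^0-x^{\star}}$. The goal is to arrive at
\begin{equation*}
\Big(\textstyle\sum_{i=0}^{k-1}\rho^{-i}\Big)^2\norms{r^k}^2 \leq (1+\rho)^2\norms{x^0-x^{\star}}^2 ,
\end{equation*}
which is the claimed bound.

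\textbf{Main obstacle.} The hard part is the exact choice of the multipliers $a_k,b_k$ and of the weight on the $x^{\star}$-inequality so that everything closes with the constant $(1+\rho)$ rather than something looser; these are the dual certificates from PEP. I would first determine them by requiring the $\rho=1$ case to reduce to the Lieder/Sabach--Shtern potential $k^2\norms{r^k}^2+2k\iprods{r^k,x^k-x^0}$. I would then verify the telescoping identity symbolically for general $\rho$, adjusting powers of $\rho$ until the residual quadratic form is a negative multiple of a perfect square.
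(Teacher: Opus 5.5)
\textbf{There is a genuine gap.} The paper does not prove this statement; it quotes \cite[Corollary~3.3]{park2022exact}, so your proposal has to stand on its own. Beyond the elementary identities, everything is left to a search: the coefficients $a_k,b_k$, the multipliers, the monotonicity $V_{k+1}\le V_k$, and the final weight. Moreover, the ansatz you commit to cannot succeed, for three reasons.

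(a) With $\varphi_0=0$ one has $\lambda_0=1$, so $x^1=x^0$, not $T(x^0)$. This is why the bound for $x^k$ involves $\sum_{i=0}^{k-1}$: $x^k$ is Park--Ryu's iterate $y_{k-1}$. Hence $V_1=a_1\norms{r^0}^2$ with $a_1>0$, and the $x^\star$-inequality at $x^k$ plus Young's inequality gives no control of it.

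(b) For $V_{j+1}-V_j$ to equal $\rho^{-2j}$ times the $j$-th inequality plus a nonpositive form, the coefficient of $\norms{r^{j+1}}^2$ forces $a_{j+1}\le\rho^{-2j}$. At $\rho=1$ this is incompatible with your own calibration, where $a_{j+1}$ grows quadratically.

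(c) Decisively, a potential built only from $\norms{r^k}^2$ and $\iprods{r^k,x^k-x^0}$ discards the $\norms{x^k-x^0}^2$ information that contractivity produces, and the sharp constant needs it. Take $k=2$, $\rho=\tfrac12$. The only nontrivial consecutive inequality is exactly $\norms{r^2}^2+2\rho^2\iprods{r^2,x^2-x^0}\le\rho^2(1-\rho^2)\norms{x^2-x^0}^2=\tfrac{3}{25}\norms{r^0}^2$, because $x^2-x^0=-\tfrac45 r^0$. Telescoping to $V_1$ and then splitting $x^2-x^\star=(x^2-x^0)+(x^0-x^\star)$ decouples $x^2-x^0$ from $r^0$. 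After that decoupling, the one-dimensional data $x^\star=0$, $x^0=1$, $r^0=\tfrac32$, $x^2=\tfrac25$, $r^2=\tfrac35$ satisfy $\norms{r^2}^2+2\rho^2\iprods{r^2,x^2-x^0}\le\tfrac{3}{25}\norms{r^0}^2$. They also satisfy the $x^\star$-inequalities at $x^0$ and at $x^2$. Yet $|r^2|=\tfrac35>\rho\,|x^0-x^\star|$. These data violate $x^2-x^0=-\tfrac45r^0$, which is exactly the information the two-term potential forgets. An S-lemma computation shows that no other valid choice of $(a_2,b_2,a_1)$ repairs this.

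\textbf{What is missing} is an invariant that retains $\norms{x^k-x^0}^2$. Set $w^k:=T(x^k)-x^0$ and $c_k:=\rho^{-k}/(1+\varphi_k)$. One can prove $\norms{w^{k+1}-w^k}\le c_k\norms{w^k}$ for all $k\ge1$, equivalently
\begin{equation*}
\varphi_k\norms{r^{k+1}}^2+2\iprods{r^{k+1},x^{k+1}-x^0}\le\frac{\rho^{-2k}-1}{\varphi_k}\norms{x^{k+1}-x^0}^2 .
\end{equation*}
At $\rho=1$ this is the Lieder/He inequality, but for $\rho<1$ its right-hand side is positive. The case $k=1$ is contractivity on the pair $(x^1,x^2)=(x^0,x^2)$. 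For the induction step:
\begin{itemize}
\item Use $x^{j+1}-x^0=\frac{\varphi_j}{1+\varphi_j}w^j$ and the identity $\frac{\varphi_k(1+\varphi_{k+1})}{(1+\varphi_k)\varphi_{k+1}}=1-c_k^2$. This identity is where the geometric $\varphi_k$ enters. Together they give $x^{k+2}-x^{k+1}=\frac{\varphi_{k+1}}{1+\varphi_{k+1}}\big[(w^{k+1}-w^k)+c_k^2w^k\big]$.
\item Since $\norms{\delta+c^2w}^2-c^2\norms{\delta+w}^2=(1-c^2)(\norms{\delta}^2-c^2\norms{w}^2)$, the induction hypothesis yields $\norms{x^{k+2}-x^{k+1}}\le\frac{\varphi_{k+1}}{1+\varphi_{k+1}}c_k\norms{w^{k+1}}$.
\item One contractivity inequality then gives $\norms{w^{k+2}-w^{k+1}}\le c_{k+1}\norms{w^{k+1}}$.
\end{itemize}
Finally, combine the invariant at index $k-1$ with the $x^\star$-inequality at $x^k$, completing a rank-one square in $(r^k,\,x^k-x^0,\,x^0-x^\star)$. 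This yields exactly $\norms{r^k}\le\frac{\rho^{1-k}+\rho}{1+\varphi_{k-1}}\norms{x^0-x^\star}=\frac{1+\rho}{\sum_{i=0}^{k-1}\rho^{-i}}\norms{x^0-x^\star}$. Your architecture (consecutive inequalities plus one inequality at $x^\star$) is right; the potential must be this three-term quantity, not a two-term one.
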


When $\rho \in (0,1)$, i.e., $T$ is contractive, Theorem~\ref{th:linear_convergence_of_HP} establishes a linear convergence rate for the fixed-point residual $\norms{x^k - T(x^k)}$. On the other hand, when $\rho = 1$, i.e., $T$ is nonexpansive, the convergence bound \eqref{eq:th11_convergence_bound} reduces to
\begin{equation*}
\norms{x^k - T(x^k)} \leq \frac{2\norms{x^0 - x^{\star}}}{k+1},
\end{equation*}
which yields the sublinear convergence rate $\BigOs{1/k}$. 
As shown in \cite{park2022exact}, the rate stated in Theorem~\ref{th:linear_convergence_of_HP} is optimal and cannot be improved in the worst case.
If $T$ is $\rho$-contractive for $\rho \in (0, 1)$, then, by the relation $\norms{x^k - x^{\star}} \leq \frac{1}{1-\rho}\norms{x^k - T(x^k)}$ (see the proof in Theorem~\ref{th:convergence_theorem1}), we also get $\norms{x^k - x^{\star}} \leq \frac{(1+\rho)\rho^{k-1}}{1-\rho}\norms{x^0 - x^{\star}}$.
Therefore, $\set{x^k}$ also strongly converges to the unique fixed point $x^{\star}$ of $T$.

\section{Main Result 1: Parameter-Free Halpern Fixed-Point Method}\label{sec:linear_convergence_rate}
In this section, we build upon the result of Theorem~\ref{th:linear_convergence_of_HP} to develop a parameter-free Halpern fixed-point method for approximating a fixed point of a $\rho$-contractive mapping $T$. 
We first derive the proposed method and present its implementation in Algorithm~\ref{alg:adaptive_HP0}. We then establish its linear convergence.

\subsection{The derivation of the algorithm}\label{subsec:derivation_of_A1}
Our main goal is to develop a parameter-free variant of \eqref{eq:HP_iteration} for the contractive case.
Suggested by the update rule $\varphi_k := \sum_{i=1}^k\rho^{-2i}$  of $\varphi_k$ in \eqref{eq:HP_iteration}, by the $\rho$-contractivity of $T$, it is natural to replace $\varphi_k$ by the following upper bound (when it is well-defined):
\begin{equation}\label{eq:new_stepsize0}
\bar{\varphi}_k := \sum_{i=1}^k\frac{\norms{x^i - x^{i-1}}^{2i}}{\norms{T(x^i) - T(x^{i-1})}^{2i}}.
\end{equation}
Indeed, by the $\rho$-contractivity  of $T$, we have $\norms{T(x^i) - T(x^{i-1})} \leq \rho\norms{x^i - x^{i-1}}$ for all $i = 1,\cdots, k$.
If $\norms{T(x^i) - T(x^{i-1})} > 0$ for all $i=1,\cdots, k$, then $\frac{\norms{x^i - x^{i-1}}^{2i}}{\norms{T(x^i) - T(x^{i-1})}^{2i}} \geq \frac{1}{\rho^{2i}}$ for $i=1,\cdots, k$.
Therefore, we can claim that
\begin{equation}\label{eq:new_stepsize0_bound}
\bar{\varphi}_k \geq \sum_{i=1}^k\rho^{-2i} =: \varphi_k.
\end{equation}
However, we may encounter $\norms{T(x^i) - T(x^{i-1})} = 0$ for some $i \in \sets{1, \cdots, k}$.
To avoid this situation, we first notice that $\varphi_k = \varphi_{k-1} + \rho^{-2k}$ for all $k \geq 0$ and, by convention, $\varphi_{-1} = 0$.
Next, we can replace $\varphi_k$ by $\bar{\varphi}_k$ updated as follows:
\begin{equation}\label{eq:new_stepsize}
\bar{\varphi}_k := \bar{\varphi}_{k-1} + \omega_k^{2k}, \quad \textrm{where} \quad \omega_k \geq \frac{1}{\rho} \quad \textrm{for all $k \geq 0$ and $\bar{\varphi}_{-1} := 0$}.
\end{equation}
Clearly, $\bar{\varphi}_k$ is also an upper bound of $\varphi_k$.

Now, we first choose $\omega_0$ such that $\omega_0 \geq \frac{1}{\rho}$.
By the $\rho$-contractivity of $T$, one way to choose the initial value $\omega_0$ is as follows:
\begin{equation}\label{eq:omega_0_choice}
\omega_0 := \frac{1}{\rho_0} \geq \frac{1}{\rho},\quad \textrm{where} \quad \rho_0 := \frac{\norms{T(x^0) - T(x^0 + v)}}{\norms{v}} \in (0,  \rho].
\end{equation}
for a given $v \neq 0$ in $\Hil$ such that $T(x^0 + v) \neq T(x^0)$.

For $k \geq 0$, given $\omega_k$, we can update $\omega_k$ as follows:
\begin{equation}\label{eq:update_omega_k}
\omega_{k+1} := \begin{cases}
\max\set{ \omega_k, \frac{\norms{x^{k+1} - x^k } }{ \norms{T(x^{k+1}) - T(x^{k})} } } & \textrm{if $\frac{\norms{x^{k+1} - x^k } }{ \norms{T(x^{k+1}) - T(x^{k})} } \leq \bar{\omega}$}, \\
\omega_k & \textrm{otherwise}.
\end{cases}
\end{equation}
Here, we set an upper bound $\bar{\omega} \in (0, +\infty)$ on $\frac{ \norms{x^{k+1} - x^k } }{ \norms{T(x^{k+1}) - T(x^{k})} }$ to prevent the explosion of this ratio.
We can heuristically choose $\bar{\omega} := \frac{\Lambda}{\rho_0} > \frac{1}{\rho}$ for some $\Lambda > 1$ sufficiently large, e.g., $\bar{\omega} := \frac{2^4}{\rho_0}$.

Since $\omega_0 \geq \frac{1}{\rho}$ and $\frac{\norms{x^{k+1} - x^k } }{ \norms{T(x^{k+1}) - T(x^{k})} } \geq \frac{1}{\rho}$, it is obvious to see that $\omega_k \geq \frac{1}{\rho}$ for all $k \geq 0$.
Using $\bar{\varphi}_k$ from \eqref{eq:update_omega_k} to replace $\varphi_k$ in the scheme \eqref{eq:HP_iteration}, we obtain a parameter-free variant of this algorithm.

Note that we can also replace the update rule \eqref{eq:update_omega_k} by the following one:
\begin{equation}\label{eq:update_omega_k_modified}
\omega_{k+1} := \begin{cases}
\min\set{ \omega_k, \frac{\norms{x^{k+1} - x^k } }{ \norms{T(x^{k+1}) - T(x^{k})} } } & \textrm{if $\frac{\norms{x^{k+1} - x^k } }{ \norms{T(x^{k+1}) - T(x^{k})} } \leq \bar{\omega}$}, \\
\omega_k & \textrm{otherwise}.
\end{cases}
\end{equation}
Then, we have the following fact, $\omega_k \geq \frac{1}{\rho}$ and $\omega_{k+1} \leq \omega_k$ for all $k \geq 0$.
The relation $\omega_{k+1} \leq \omega_k$ for all $k \geq 0$ is a direct consequence of \eqref{eq:update_omega_k_modified}.
We prove $\omega_k \geq \frac{1}{\rho}$ by induction. 
For $k = 0$, we have $\omega_0 := \frac{1}{\rho_0} \geq \frac{1}{\rho}$.
Assume that  $\omega_k \geq \frac{1}{\rho}$ for $k \geq 1$.
We prove that $\omega_{k+1} \geq \frac{1}{\rho}$.
Indeed, from \eqref{eq:update_omega_k_modified} we have two possibilities: (i) $\omega_{k+1} = \omega_k \geq \frac{1}{\rho}$ or (ii) $\omega_{k+1} = \frac{\norms{x^{k+1} - x^k } }{ \norms{T(x^{k+1}) - T(x^{k})} } \geq \frac{1}{\rho}$.
Therefore, we obtain our conclusion that $\omega_k \geq \frac{1}{\rho}$ for all $k \geq 0$. 

Now, we combine all the steps discussed above algorithmically to obtain Algorithm~\ref{alg:adaptive_HP}.

\begin{algorithm}[!htbp]
\caption{(Parameter-Free Halpern Fixed-Point Algorithm)}\label{alg:adaptive_HP0}
\begin{algorithmic}[1]
\State\textbf{Initialization:} Choose an initial point $x^0\in\mathcal H$.
Estimate $\omega_0$ as in \eqref{eq:omega_0_choice} and set $\varphi_{-1} := 0$.
\State \textbf{For $k = 0, 1, 2, \cdots$, perform}
\State\hspace{3ex}\label{alg1:step3}If ~$\norms{r^k} = 0$ for $r^k := x^k - T(x^k)$,  then TERMINATE.
\State\hspace{3ex}Update $\bar{\varphi}_{k} := \bar{\varphi}_{k-1} + \omega_k^{2k}$.
\State\hspace{3ex}\label{alg1:step5}Update
\begin{equation*}
x^{k+1} :=  \frac{1}{ \bar{\varphi}_k + 1} x^0 + \frac{ \bar{\varphi}_k}{\bar{\varphi}_k + 1}T(x^k).
\end{equation*}
\State\hspace{3ex}\label{alg1:step6}Update $\omega_{k+1}$ as in \eqref{eq:update_omega_k} or \eqref{eq:update_omega_k_modified}.
\State\textbf{End For}
\end{algorithmic}
\end{algorithm}

\noindent
We make the following remarks regarding Algorithm~\ref{alg:adaptive_HP0}.
\begin{compactitem}
\item Algorithm~\ref{alg:adaptive_HP0} is completely parameter-free, as it does not require any prior knowledge of the contraction factor $\rho$ of $T$. 
Moreover, it is straightforward to implement since it does not involve any inner loop, line-search, or bisection procedure.

\item The stopping criterion in Step~\ref{alg1:step3} is primarily intended for theoretical analysis. 
In practice, given a prescribed accuracy $\epsilon > 0$, we terminate Algorithm~\ref{alg:adaptive_HP0} whenever $\norms{r^k} \leq \epsilon$ or $\norms{r^k} \leq \epsilon\cdot\max\set{1, \norms{r^0}}$.
As a safeguard, we can also set the maximum number of iterations to $k_{\max}$.

\item The update rule for $\omega_{k+1}$ in Step~\ref{alg1:step6} requires only the evaluation of the two norms $\norms{x^{k+1} - x^k}$ and $\norms{T(x^{k+1}) - T(x^k)}$, together with two elementary arithmetic operations. 
Consequently, it introduces only negligible computational overhead compared with the classical Halpern iteration.
\end{compactitem}
Overall, the per-iteration computational complexity of Algorithm~\ref{alg:adaptive_HP0} is essentially the same as that of the classical Halpern, Banach-Picard (BP), and Krasnosel'ski\v{\i}-Mann (KM) fixed-point iterations.

\subsection{Convergence analysis}\label{subsec:convergence_guarantee_of_A1}
The following theorem proves linear convergence rates of Algorithm~\ref{alg:adaptive_HP0}.

\begin{theorem}\label{th:convergence_theorem1}
Let $T : \Hil \to \Hil$ be $\rho$-contractive for some $\rho \in (0, 1)$ and $x^{\star}$ be the unique fixed-point of $T$.
Let $\sets{x^k}$ be generated by Algorithm~\ref{alg:adaptive_HP0}.
Then, we have
\begin{equation}\label{eq:convergence_bound1}
\norms{x^k - T(x^k)} \leq  C_{\rho} \rho^k \norms{x^0 - T(x^0)} \quad \textrm{and} \quad \norms{x^k - x^{\star}} \leq \frac{C_{\rho}(1+\rho)}{1-\rho} \rho^k \norms{x^0 - x^{\star}},
\end{equation}
where $C_{\rho} := 1 + \frac{1+\rho}{(1-\rho)^2\rho}$.
Consequently, $\sets{x^k}$ strongly converges to $x^{\star}$ with a linear rate.
\end{theorem}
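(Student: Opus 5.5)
The plan is to avoid the Lyapunov/potential machinery behind Theorem~\ref{th:linear_convergence_of_HP} and work with a direct one-step recursion for the distance $e_k := \norms{x^k - x^{\star}}$. Two features of Algorithm~\ref{alg:adaptive_HP0} make this possible. First, $\lambda_k := \frac{1}{1+\bar{\varphi}_k}$ is geometrically small. Second, the Halpern update is an affine combination of $x^0$ and $T(x^k)$. If the algorithm terminates at Step~\ref{alg1:step3}, then $x^k = x^{\star}$ and there is nothing to prove, so I assume it never terminates.

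\textbf{Step 1 (smallness of $\lambda_k$).} From \eqref{eq:new_stepsize} we have $\bar{\varphi}_k \geq \omega_k^{2k}$. The discussion after \eqref{eq:update_omega_k} and \eqref{eq:update_omega_k_modified} gives $\omega_k \geq \frac{1}{\rho}$ for either update rule. Therefore
\begin{equation*}
\lambda_k = \frac{1}{1+\bar{\varphi}_k} \leq \frac{1}{\bar{\varphi}_k} \leq \omega_k^{-2k} \leq \rho^{2k}.
\end{equation*}
For $k=0$ this reads $\lambda_0 \leq 1$, which is trivially true. This is the only place where the adaptive choice of $\omega_k$ enters. Its role is to guarantee $\bar{\varphi}_k \geq \rho^{-2k}$ without knowing $\rho$.

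\textbf{Step 2 (distance recursion).} Since $x^{\star} = T(x^{\star})$, we can write
\begin{equation*}
x^{k+1} - x^{\star} = \lambda_k (x^0 - x^{\star}) + (1-\lambda_k)\big(T(x^k) - T(x^{\star})\big).
\end{equation*}
Applying the triangle inequality and $\rho$-contractivity gives $e_{k+1} \leq \lambda_k e_0 + \rho e_k \leq \rho^{2k} e_0 + \rho e_k$. Unrolling this recursion yields
\begin{equation*}
e_k \leq \rho^k e_0 + \sum_{j=0}^{k-1} \rho^{k-1-j}\rho^{2j} e_0 = \rho^k e_0 + \rho^{k-1}e_0\sum_{j=0}^{k-1}\rho^{j} \leq \Big(1 + \frac{1}{\rho(1-\rho)}\Big)\rho^k e_0.
\end{equation*}
This already gives linear convergence of $e_k$, with a constant no larger than $C_{\rho}$. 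The reason is that $\frac{1}{\rho(1-\rho)} \leq \frac{1+\rho}{\rho(1-\rho)^2}$.

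\textbf{Step 3 (translate to the stated forms).} I will use the two elementary inequalities
\begin{equation*}
(1-\rho)e_k \leq \norms{x^k - T(x^k)} \leq (1+\rho)e_k.
\end{equation*}
The lower bound follows from $\norms{x - x^{\star}} \leq \norms{x - T(x)} + \norms{T(x)-T(x^{\star})}$, and it is the relation quoted after Theorem~\ref{th:linear_convergence_of_HP}. The upper bound is immediate from the triangle inequality and contractivity.
\begin{itemize}
\item For the distance bound, multiply the Step~2 bound by $\frac{1+\rho}{1-\rho} \geq 1$ to obtain the stated estimate $e_k \leq \frac{C_{\rho}(1+\rho)}{1-\rho}\rho^k e_0$, since the Step~2 constant is at most $C_{\rho}$.
\item For the residual bound, combine the upper bound with Step~2 and with $e_0 \leq \frac{1}{1-\rho}\norms{x^0 - T(x^0)}$. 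This gives $\norms{x^k - T(x^k)} \leq \frac{1+\rho}{1-\rho}\big(1 + \frac{1}{\rho(1-\rho)}\big)\rho^k\norms{r^0}$.
\end{itemize}

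\textbf{Main obstacle (matching $C_{\rho}$).} The remaining work is to make the residual constant agree with the stated $C_{\rho} = 1 + \frac{1+\rho}{(1-\rho)^2\rho}$. The crude product from Step~3 carries an extra factor $\frac{1+\rho}{1-\rho}$ on the leading term $1$. To remove it, I would run a residual recursion directly instead of passing through $e_k$. Using $x^{k+1} - x^k = \lambda_k(x^0 - x^k) + (1-\lambda_k)(T(x^k) - x^k)$, I will bound $\norms{r^{k+1}}$ by $\rho\norms{r^k}$ plus a perturbation of size $(1+\rho)\lambda_k e_0$, up to cross terms. I will then absorb the perturbation with $\lambda_k \leq \rho^{2k}$ and $e_0 \leq \frac{\norms{r^0}}{1-\rho}$. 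The leading term $\rho^k\norms{r^0}$ then comes with coefficient $1$, and the geometric tail produces $\frac{1+\rho}{(1-\rho)^2\rho}$. The delicate part is controlling the cross terms cleanly enough to hit exactly this constant. If that fails, the Step~3 bound still proves linear convergence with an explicit constant of the same order.
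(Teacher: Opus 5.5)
There is a genuine gap in the first (residual) inequality. Your Steps~1 and~2 are correct. The direct recursion $e_{k+1}\le\lambda_k e_0+\rho e_k$ is a cleaner route to the distance bound than the paper's, which derives it from the residual bound via $e_k\le\norms{r^k}/(1-\rho)$ and $\norms{r^0}\le(1+\rho)e_0$. Your constant $1+\frac{1}{\rho(1-\rho)}$ is already below $C_\rho\le\frac{C_\rho(1+\rho)}{1-\rho}$.

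The statement fixes the constant $C_\rho$, but your Step~3 only yields $\frac{1+\rho}{1-\rho}\big(1+\frac{1}{\rho(1-\rho)}\big)=C_\rho+\frac{2\rho}{1-\rho}$, which is strictly larger. So the first inequality is not proved as stated. The ``main obstacle'' paragraph does not close this. It never writes down the recursion, and the perturbation $(1+\rho)\lambda_k e_0$ it aims for is unjustified. The quantity that actually appears is $\norms{x^0-T(x^k)}$, and it is not bounded by $e_0$: at $k=0$ it equals $\norms{r^0}$, which is $(1+\rho)e_0$ for $T(x)=-\rho x$, $x^0=1$. Going through distances only gives $\norms{x^0-T(x^k)}\le e_0+\rho e_k\le(1+\rho)e_0\le\frac{1+\rho}{1-\rho}\norms{r^0}$ (using $e_k\le e_0$, which follows by induction). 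This again overshoots $C_\rho$.

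Two ingredients are missing, and both are in the paper.

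First, you need a residual recursion with no cross terms. Since $x^{k+1}-T(x^k)=\lambda_k(x^0-T(x^k))$, one has $r^{k+1}=\lambda_k(x^0-T(x^k))+T(x^k)-T(x^{k+1})$ and $x^{k+1}-x^k=\lambda_k(x^0-T(x^k))-r^k$. Contractivity then gives
\[
\norms{r^{k+1}}\le\rho\norms{r^k}+(1+\rho)\lambda_k\norms{x^0-T(x^k)}.
\]
The paper obtains the same inequality by inserting $T(T(x^k))$.

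Second, you must bound the anchor term directly by $\norms{r^0}$, not through $e_0$. From $\norms{x^{k+1}-x^0}=(1-\lambda_k)\norms{T(x^k)-x^0}\le\rho\norms{x^k-x^0}+\norms{r^0}$ you get $\norms{x^k-x^0}\le\frac{\norms{r^0}}{1-\rho}$. Hence $\norms{x^0-T(x^k)}\le\norms{r^0}+\rho\norms{x^k-x^0}\le\frac{\norms{r^0}}{1-\rho}$.

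Combined with $\lambda_k\le\rho^{2k}$, this gives $\norms{r^{k+1}}\le\rho\norms{r^k}+\frac{1+\rho}{1-\rho}\rho^{2k}\norms{r^0}$. Unrolling exactly as in your Step~2 produces $C_\rho\rho^k\norms{r^0}$. Together with your Step~2 for the distance, this completes the theorem.
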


\begin{proof}
Denote by $\lambda_k := \frac{1}{\varphi_k + 1}$, $r^k := x^k - T(x^k)$, and $R_k := \norms{x^k - x^0}$.
First, since $\bar{\varphi}_k \geq 0$ for all $k \geq 0$, we have $1 + \bar{\varphi}_k = 1 + \bar{\varphi}_{k-1} + \omega^{2k} \geq \omega^{2k}$ for all $k \geq 1$.
Second, by the choice of $\rho_0$ in \eqref{eq:omega_0_choice}, we have $\omega_0 \geq \frac{1}{\rho}$.
Third, if $\omega_k$ is updated by \eqref{eq:update_omega_k}, then we have 
\begin{equation*}
\omega_{k+1} \geq \omega_k \quad \textrm{and} \quad \omega_{k+1} \geq \frac{\norms{x^{k+1} - x^k } }{ \norms{T(x^{k+1}) - T(x^{k})} } \geq \frac{1}{\rho}.
\end{equation*}
Alternatively, if $\omega_k$ is updated by \eqref{eq:update_omega_k_modified}, then as we have shown earlier $\omega_k \geq \frac{1}{\rho}$.
In both cases, by induction, we have $\omega_k \geq \frac{1}{\rho}$ for all $k \geq 0$.
Thus, we conclude that $1 + \bar{\varphi}_k \geq \rho^{-2k}$, leading to $\lambda_k := \frac{1}{1+ \bar{\varphi}_k} \leq \rho^{2k}$ for all $k \geq 0$.

Now, from Step \ref{alg1:step5} of Algorithm~\ref{alg:adaptive_HP0} and the definition of $\lambda_k := \frac{1}{\bar{\varphi}_k + 1} \in (0, 1)$, we have $x^{k+1} = \lambda_kx^0 + (1-\lambda_k)T(x^k)$, leading to $x^{k+1} - x^0 = (1-\lambda_k)(T(x^k) - x^0)$.
Using the triangle inequality, this relation, and the $\rho$-contractivity of $T$, we can show that
\begin{equation*}
\arraycolsep=0.2em
\begin{array}{lcl}
R_{k+1} &:= & \norms{x^{k+1} - x^0} = (1-\lambda_k)\norms{T(x^k) - x^0} \leq \norms{T(x^k) - x^0} \vspace{1ex}\\
& \leq & \norms{T(x^k) - T(x^0)} + \norms{x^0 - T(x^0)} \vspace{1ex}\\
& \leq & \rho\norms{x^k - x^0} + \norms{r^0} \vspace{1ex}\\
& = & \rho R_k + \norms{r^0}.
\end{array}
\end{equation*}
By induction, we obtain $R_k \leq \rho^k R_0 + \norms{r^0}\sum_{i=0}^{k-1}\rho^i = \frac{1-\rho^k}{1-\rho}\norms{r^0} \leq \frac{\norms{r^0}}{1-\rho}$  since $R_0 = 0$.
Using again the $\rho$-contractivity of $T$ and this inequality, we can show that
\begin{equation}\label{eq:th21_proof1}
\arraycolsep=0.2em
\begin{array}{lcl}
\norms{x^0 - T(x^k)} & \leq & \norms{x^0 - T(x^0)} + \norm{T(x^0) - T(x^k)} \leq \norms{r^0} + \rho R_k \vspace{1ex}\\
& \leq & \Big(1 + \frac{\rho}{1-\rho}\Big)\norms{r^0} = \frac{1}{1-\rho}\norms{r^0}.
\end{array}
\end{equation}
Next, using one more time the $\rho$-contractivity of $T$ and the triangle inequality, we can derive that
\begin{equation}\label{eq:th21_proof2} 
\arraycolsep=0.2em
\begin{array}{lcl}
\norms{r^{k+1}} & = & \norms{x^{k+1} - T(x^{k+1})} = \norms{x^{k+1} - T(x^k) + [T(x^k) - T(T(x^k))] + [T(T(x^k)) - T(x^{k+1})]} \vspace{1ex}\\
& \leq & \norms{T(x^k) - T(T(x^k))} + \norms{x^{k+1} - T(x^k)} +  \norms{T(T(x^k)) - T(x^{k+1})} \vspace{1ex}\\
& \leq & \rho\norms{r^k} +  (1 + \rho)\norms{x^{k+1} - T(x^k)}.
\end{array}
\end{equation}
On the other hand, again, by Step \ref{alg1:step5} of Algorithm~\ref{alg:adaptive_HP0}, we have 
\begin{equation*} 
\arraycolsep=0.2em
\begin{array}{lcl}
\norms{x^{k+1} - T(x^k)} &= & \norms{ \lambda_kx^0 + (1-\lambda_k)T(x^k) - T(x^k)} = \norms{\lambda_k(x^0 - T(x^k))} = \lambda_k\norms{x^0 - T(x^k)}.
\end{array}
\end{equation*}
Combining this expression, \eqref{eq:th21_proof2}, and \eqref{eq:th21_proof1}, we get 
\begin{equation*} 
\arraycolsep=0.2em
\begin{array}{lcl}
\norms{r^{k+1}} \leq \rho\norms{r^k} + (1+\rho)\lambda_k\norms{x^0 - T(x^k)} \leq \rho\norms{r^k} + \frac{1+\rho}{1-\rho}\lambda_k\norms{r^0}.
\end{array}
\end{equation*}
As we have shown above that $\lambda_k \leq \rho^{2k}$, the last inequality leads to
\begin{equation}\label{eq:th21_proof3} 
\arraycolsep=0.2em
\begin{array}{lcl}
\norms{r^{k+1}} \leq \rho\norms{r^k} + \frac{(1+\rho)\norms{r^0}}{1-\rho}   \rho^{2k} = \rho \norms{r^k} + C_0\norms{r^0} \rho^{2k},
\end{array}
\end{equation}
where $C_0 : = \frac{1+\rho}{1-\rho}$.
By induction, we can show from \eqref{eq:th21_proof3} that
\begin{equation*} 
\arraycolsep=0.2em
\begin{array}{lcl}
\norms{r^k} & \leq & \rho^k\norms{r^0} + C_0\norms{r^0} \sum_{i=0}^{k-1}\rho^{k-1-i}\rho^{2i} = \rho^k\norms{r^0} + C_0\norms{r^0} \rho^{k-1}\sum_{i=0}^{k-1}\rho^i \vspace{1ex}\\
& \leq & \Big( \rho + \frac{1+\rho}{(1-\rho)^2}\Big)\rho^{k-1} \norms{r^0} = \Big(1 + \frac{1+\rho}{\rho(1-\rho)^2}\Big)\rho^{k} \norms{r^0}.
\end{array}
\end{equation*}
This is exactly the first bound of \eqref{eq:convergence_bound1} by using $r^k := x^k - T(x^k)$.

Finally, since $T$ is $\rho$-contractive, we have  
 \begin{equation*}
     \norms{ x^k - x^{\star} }  = \norms{ x^k - T(x^k) + T(x^k) - T(x^{\star}) } \leq \norms{ x^k - T(x^k) } + \rho \norms{ x^k - x^{\star} }.
 \end{equation*}
 Thus, we get $\norms{ x^k - x^{\star} }  \leq \frac{1}{1-\rho} \norms{ x^k - T(x^k)}$. 
 Moreover, by $x^{\star} = T(x^{\star})$ and  the $\rho$-contractivity of $T$, we can easily show that $\norms{r^0} = \norms{x^0 - T(x^0)} \leq \norms{x^0 - x^{\star}} + \norms{T(x^0) - T(x^{\star}} \leq (1+\rho)\norms{x^0 - x^{\star}}$.
Combining the last two inequalities and the first bound of  \eqref{eq:convergence_bound1}, we get the second bound of  \eqref{eq:convergence_bound1}.
Since $\rho^k \to 0$ as $k \to \infty$, the second bound of  \eqref{eq:convergence_bound1} shows that $\sets{x^k}$ converges strongly to $x^{\star} \in \mathrm{Fix}(T)$.
\end{proof}

\section{Main Result 2: Adaptive Halpern Fixed-Point Method}\label{sec:adaptive_Halpern_method}
Inspired by \cite{he2024convergence}, we develop in this section an adaptive Halpern fixed-point method for approximating the unique fixed point of a $\rho$-contractive mapping. 
Unlike the nonexpansive setting considered in \cite{he2024convergence}, where no problem-dependent parameter is required, our framework naturally involves the contraction parameter $\rho$. 
Our objective is to minimize the reliance on this parameter as much as possible. 
In particular, the proposed method requires only an upper bound of $\rho$, rather than its exact value. 
Although the resulting algorithm is not theoretically completely parameter-free, as is Algorithm~\ref{alg:adaptive_HP0}, it significantly reduces the amount of problem-specific information needed in practice. 
Following the terminology introduced in \cite{he2024convergence}, we refer to this approach as an \emph{adaptive Halpern fixed-point method}, since it adaptively updates the anchor parameter throughout the iterations.

\subsection{The derivation of the algorithm}\label{subsec:A2_derivation}
Following the same idea as in \cite{he2024convergence}, we first define
\begin{equation}\label{eq:residuals}	
	r^k := x^k-T(x^k) \quad \text{and} \quad s^k := x^0-T(x^k).
\end{equation}	
Next, starting from the Halpern fixed-point iteration \eqref{eq:HP_iteration} with $\lambda_k = \hat{\lambda}_k := \frac{1}{\hat{\varphi}_k + 1}$ for a given $\hat{\varphi}_k > 0$ and $u = x^0$, we can rewrite it as
\begin{equation}\label{eq:HP_iteration2}
x^{k+1} := \hat{\lambda}_k x^0 + (1 - \hat{\lambda}_k)T(x^k) = \frac{1}{\hat{\varphi}_k + 1}x^0 + \frac{\hat{\varphi}_k}{\hat{\varphi}_k + 1}T(x^k).
\end{equation}
Using this formula and \eqref{eq:residuals}, we can show that
\begin{equation*}
\arraycolsep=0.2em
\begin{array}{lcl}
x^{k+1} - T(x^k) & = & \hat{\lambda}_kx^0 + (1-\hat{\lambda}_k)T(x^k) - T(x^k) = \hat{\lambda}_k(x^0 - T(x^k)) = \hat{\lambda}_ks^k, \vspace{1ex}\\
x^{k+1} - x^k & = & \hat{\lambda}_k x^0 +  (1-\hat{\lambda}_k)T(x^k) - x^k = \hat{\lambda}_k(x^0 - T(x^k))  - (x^k - T(x^k)) = \hat{\lambda}_ks^k - r^k.
\end{array}
\end{equation*}
Let $\bar{\rho} \in (0, 1)$ be an upper bound of $\rho$, i.e., $0 < \rho \leq \bar{\rho} < 1$.
By the $\rho$-contractivity of $T$, we have $\norms{T(x^{k+1}) - T(x^k)} \leq \rho\norms{x^{k+1} - x^k} \leq \bar{\rho}\norms{x^{k+1} - x^k}$ for any $k \geq 0$.
Then, utilizing the above two expressions, and the last inequality, we can show that 
\begin{equation*}
\arraycolsep=0.2em
\begin{array}{lcl}
0 &\leq & \bar{\rho}^2\norms{x^{k+1} - x^k}^2 - \norms{T(x^{k+1}) - T(x^k)}^2 \vspace{1ex}\\
& = & \bar{\rho}^2\norms{\hat{\lambda}_ks^k - r^k}^2 - \norms{x^{k+1} - T(x^{k+1}) - (x^{k+1} - T(x^k))}^2 \vspace{1ex}\\
& = & \bar{\rho}^2\norms{\hat{\lambda}_ks^k - r^k}^2 - \norms{r^{k+1} - \hat{\lambda}_ks^k}^2 \vspace{1ex}\\
& = & \bar{\rho}^2\norms{\hat{\lambda}_ks^k - r^k}^2 - \norms{r^{k+1}}^2 + 2\hat{\lambda}_k\iprods{s^k, r^{k+1}} - \hat{\lambda}_k^2\norms{s^k}^2.
\end{array}
\end{equation*}
This inequality leads to 
\begin{equation*}
\arraycolsep=0.2em
\begin{array}{lcl}
\norms{r^{k+1}}^2 &\leq & 2\hat{\lambda}_k\iprods{s^k,  r^{k+1}} + \bar{\rho}^2\norms{\hat{\lambda}_ks^k - r^k}^2  - \hat{\lambda}_k^2\norms{s^k}^2.
\end{array}
\end{equation*}
Therefore, if we impose 
\begin{equation}\label{eq:descent_cond}
\bar{\rho} \norms{\hat{\lambda}_ks^k - r^k}  \leq \hat{\lambda}_k\norms{s^k},
\end{equation}
then the last inequality reduces to
\begin{equation}\label{eq:descent_pro}
\norms{r^{k+1} }^2 \leq  2\hat{\lambda}_k\iprods{s^k, r^{k+1}}.
\end{equation}
Our objective is to develop an adaptive update rule for $\hat{\lambda}_k$ based on the descent condition \eqref{eq:descent_cond}. 
However, this condition still depends on the upper bound $\bar{\rho}$ of the contraction factor. 
To eliminate this dependence, we first establish a sequence of technical lemmas that will lead to the desired update rule for $\hat{\lambda}_k$. 
We begin with the following lemma.

\begin{lemma}\label{le:key_properties}
Let $\sets{x^k}_{k \geq 0}$ be updated by \eqref{eq:HP_iteration2} and $\sets{\hat{\varphi}_k} \subset (0, +\infty)$ be such that $\hat{\lambda}_k = \frac{1}{1 + \hat{\varphi}_k}$.
Let $r^k$ and $s^k$ be defined by \eqref{eq:residuals}.
Assume that $\bar{\rho} \norms{\hat{\lambda}_ks^k - r^k}  \leq \hat{\lambda}_k\norms{s^k}$ for all $k \geq 0$ as in \eqref{eq:descent_cond}.
Then
\begin{itemize}
\item[{\rm(i)}]  $\bar{\rho}\norms{x^{k+1} -  x^k} \leq \norms{ x^{k+1} - T(x^k) }$.
\item[{\rm(ii)}] $\norms{ r^{k+1} }^2 \leq 2 \hat{\lambda}_k \iprods{s^k, r^{k+1}}$.
\item[{\rm(iii)}] $\norms{ r^{k+1}}^2 \leq \frac{2}{\hat{\varphi}_k} \iprods{ r^{k+1}, x^0 - x^{k+1}}$.
\end{itemize}
\end{lemma}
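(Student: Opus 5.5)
The plan is to work directly from the two identities derived just before the lemma, namely $x^{k+1} - T(x^k) = \hat{\lambda}_k s^k$ and $x^{k+1} - x^k = \hat{\lambda}_k s^k - r^k$. These follow from \eqref{eq:HP_iteration2} and the definitions in \eqref{eq:residuals}. Each item is then a rewriting of the descent condition \eqref{eq:descent_cond} or of the chain of inequalities already displayed in the derivation.

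For (i), I will substitute the two identities into \eqref{eq:descent_cond}. The left-hand side $\bar{\rho}\norms{\hat{\lambda}_k s^k - r^k}$ is exactly $\bar{\rho}\norms{x^{k+1}-x^k}$. The right-hand side $\hat{\lambda}_k\norms{s^k} = \norms{\hat{\lambda}_k s^k}$ is exactly $\norms{x^{k+1}-T(x^k)}$, using $\hat{\lambda}_k>0$. So (i) is a restatement of the assumption.

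For (ii), I will reproduce the expansion preceding the lemma. By contractivity and $\rho\le\bar{\rho}$, we have $\norms{T(x^{k+1})-T(x^k)}^2 \le \bar{\rho}^2\norms{x^{k+1}-x^k}^2$. Next, write $T(x^{k+1})-T(x^k) = \hat{\lambda}_k s^k - r^{k+1}$ and expand the square. This gives
$\norms{r^{k+1}}^2 \le 2\hat{\lambda}_k\iprods{s^k,r^{k+1}} + \bar{\rho}^2\norms{\hat{\lambda}_k s^k - r^k}^2 - \hat{\lambda}_k^2\norms{s^k}^2$.
Squaring the nonnegative quantities in \eqref{eq:descent_cond} shows that the last two terms sum to something nonpositive, which yields (ii).

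For (iii), the key step is to express $s^k$ through $x^0-x^{k+1}$. From $x^{k+1}-x^0 = (1-\hat{\lambda}_k)(T(x^k)-x^0)$ we get $s^k = x^0 - T(x^k) = \frac{1}{1-\hat{\lambda}_k}(x^0 - x^{k+1})$. Since $1-\hat{\lambda}_k = \frac{\hat{\varphi}_k}{1+\hat{\varphi}_k}$ and $\hat{\lambda}_k = \frac{1}{1+\hat{\varphi}_k}$, the coefficient is $\frac{\hat{\lambda}_k}{1-\hat{\lambda}_k} = \frac{1}{\hat{\varphi}_k}$. Hence $\hat{\lambda}_k s^k = \frac{1}{\hat{\varphi}_k}(x^0 - x^{k+1})$, and plugging this into (ii) gives (iii). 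This requires $\hat{\varphi}_k>0$, which is assumed.

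None of these steps is a real obstacle. The only point needing care is the algebra in (iii), in particular confirming the coefficient $\frac{\hat{\lambda}_k}{1-\hat{\lambda}_k}=\frac{1}{\hat{\varphi}_k}$ and that $1-\hat{\lambda}_k\neq 0$.
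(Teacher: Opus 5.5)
Your proposal is correct and takes essentially the same approach as the paper. Parts (i) and (iii) are argued identically. For (ii), you expand the square first and then apply the squared descent condition, whereas the paper chains $\norms{r^{k+1}-\hat{\lambda}_k s^k} \leq \bar{\rho}\norms{x^{k+1}-x^k} \leq \hat{\lambda}_k\norms{s^k}$ via (i) and squares afterwards, which is only a reordering of the same computation.
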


\begin{proof}
(i). From \eqref{eq:residuals} and \eqref{eq:HP_iteration2}, we get $x^{k+1} = T(x^k) + \hat{\lambda}_k s^k$, which immediately implies that $\norms{ x^{k+1} - T(x^k)} = \hat{\lambda}_k \norms{s^k}$ and $x^k - x^{k+1} = (x^k - T(x^k)) - \hat{\lambda}_k s^k = r^k - \hat{\lambda}_k s^k$.
Substituting these two expressions into $\bar{\rho} \norms{\hat{\lambda}_ks^k - r^k}  \leq \hat{\lambda}_k\norms{s^k}$, we obtain $\bar{\rho} \norms{ x^k - x^{k+1}} \leq \norms{ x^{k+1} - T(x^k)}$, which proves (i).
			
(ii) Next, utilizing $r^k$ and $s^k$ from \eqref{eq:residuals}, and \eqref{eq:HP_iteration2}, we can show that
\begin{equation*}
r^{k+1} = x^{k+1} - T(x^{k+1}) = T(x^k) - T(x^{k+1}) + \hat{\lambda}_k s^k,
\end{equation*}
which is equivalent to $r^{k+1} - \hat{\lambda}_k s^k = T(x^k) - T(x^{k+1})$.

Now, since $\norms{T(x^{k+1}) - T(x^k)} \leq \rho\norms{x^{k+1} - x^k} \leq \bar{\rho}\norms{x^{k+1} - x^k}$, using (i), we can deduce that
\begin{equation*}
\norms{ r^{k+1} - \hat{\lambda}_k s^k } = \norms{ T(x^{k+1}) - T(x^k)} \leq \bar{\rho} \norms{ x^{k+1} - x^k } \overset{\tiny\mathrm{(i)}}{\leq} \norms{ x^{k+1} - T(x^k) } =  \hat{\lambda}_k  \norms{ s^k }.
\end{equation*}
Squaring both sides and cancelling the common term $\hat{\lambda}_k^2\norms{s^k}^2$, we get (ii).
			
(iii). From \eqref{eq:HP_iteration2}, we have $x^0 - x^{k+1} = (1 - \hat{\lambda}_k) (x^0 - T(x^k)) = (1 - \hat{\lambda}_k)s^k$.
Combining this relation and  (ii), we can deduce that
\begin{equation*}
\norms{ r^{k+1} }^2 \leq \frac{2\hat{\lambda}_k}{1 - \hat{\lambda}_k} \iprods{ r^{k+1}, x^0 - x^{k+1}}.
\end{equation*}
Since $\hat{\lambda}_k = \frac{1}{\hat{\varphi}_k + 1}$, we obtain (iii) from the last inequality.
\end{proof}

Next, we establish a lower bound on $\hat{\varphi}_k$ to prove a linear convergence rate of our method.

\begin{lemma}\label{le:varphi_k_bounds}
Let $\sets{x^k}_{k \geq 0}$ be updated by \eqref{eq:HP_iteration2} and $\sets{\hat{\varphi}_k} \subset (0, +\infty)$ be such that $\hat{\lambda}_k = \frac{1}{1 + \hat{\varphi}_k}$.
Let $r^k$ and $s^k$ be defined by \eqref{eq:residuals}.
Then, the following statements hold.
\begin{itemize}
\item[{\rm(i)}] If $\bar{\rho} \norms{\hat{\lambda}_ks^k - r^k}  = \hat{\lambda}_k\norms{s^k}$ holds for $k \geq 0$, then we have $\hat{\varphi}_k \geq \left(\frac{1 + \bar{\rho}}{2\bar{\rho}}\right)^k\left(\hat{\varphi}_0 + \frac{2}{1 - \bar{\rho}} \right) - \frac{2}{1 - \bar{\rho}}$.

\item[{\rm(ii)}] If $\bar{\rho} \norms{\hat{\lambda}_ks^k - r^k}  \leq \hat{\lambda}_k\norms{s^k}$ holds for all $k \geq 0$ and $2\omega\hat{\lambda}_k\iprods{s^k, r^k} \leq \norms{r^k}^2$ hold for all $k \geq 1$ and a fixed $\omega \in (1, +\infty)$, then we have $\hat{\varphi}_k \geq \omega^k \big(\hat{\varphi}_0 +  \frac{2\omega-1}{\omega - 1}\big) -  \frac{2\omega-1}{\omega - 1}$ for all $k \geq 0$.
\end{itemize}
\end{lemma}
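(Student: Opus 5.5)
The plan is to reduce both parts to a one-step affine recursion
\[
\hat{\varphi}_k \;\geq\; a\,\hat{\varphi}_{k-1} + b, \qquad k\geq 1,
\]
with $a>1$, and then unroll it. Writing $c := \frac{b}{a-1}$, the recursion is equivalent to $\hat{\varphi}_k + c \geq a(\hat{\varphi}_{k-1}+c)$, so induction gives $\hat{\varphi}_k \geq a^k(\hat{\varphi}_0 + c) - c$. The two parts need:
\begin{itemize}
\item for (ii): $a = \omega$ and $b = 2\omega - 1$, so that $c = \frac{2\omega-1}{\omega-1}$;
\item for (i): $a = \frac{1+\bar{\rho}}{2\bar{\rho}}$ and $b = \frac{1}{\bar{\rho}}$, so that $c = \frac{1}{\bar{\rho}}\cdot\frac{2\bar{\rho}}{1-\bar{\rho}} = \frac{2}{1-\bar{\rho}}$.
\end{itemize}
In part (i) I read the equality hypothesis as holding for every $k\geq 0$, so that the inequality form needed for Lemma~\ref{le:key_properties} is also available.

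\textbf{Common ingredient.} Since the hypothesis \eqref{eq:descent_cond} holds, Lemma~\ref{le:key_properties}(iii) at index $k-1$ gives
\[
\norms{r^k}^2 \leq \frac{2}{\hat{\varphi}_{k-1}}\iprods{r^k, x^0 - x^k}.
\]
Next, $s^k = x^0 - T(x^k) = (x^0 - x^k) + r^k$. Hence
\[
\iprods{s^k, r^k} = \iprods{x^0-x^k, r^k} + \norms{r^k}^2 \geq \alpha_k\norms{r^k}^2, \qquad \alpha_k := \tfrac{\hat{\varphi}_{k-1}}{2} + 1.
\]
By Cauchy--Schwarz this also yields $\norms{s^k} \geq \alpha_k\norms{r^k}$. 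I assume $r^k \neq 0$; otherwise the algorithm has terminated and there is nothing to prove at that index.

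\textbf{Part (ii).} Insert the lower bound on $\iprods{s^k,r^k}$ into the hypothesis $2\omega\hat{\lambda}_k\iprods{s^k,r^k} \leq \norms{r^k}^2$. This gives $2\omega\hat{\lambda}_k\alpha_k\norms{r^k}^2 \leq \norms{r^k}^2$. Dividing by $\norms{r^k}^2$ and using $\hat{\lambda}_k = 1/(1+\hat{\varphi}_k)$, we get $\omega(\hat{\varphi}_{k-1}+2) \leq 1 + \hat{\varphi}_k$. That is exactly $\hat{\varphi}_k \geq \omega\hat{\varphi}_{k-1} + 2\omega - 1$, and unrolling finishes (ii).

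\textbf{Part (i): the main obstacle.} The naive triangle inequality applied to the equality only yields the factor $\frac{1-\bar{\rho}}{2\bar{\rho}}$, which is too weak. So I will work with the squared equality
\[
\bar{\rho}^2\bigl(\hat{\lambda}_k^2\norms{s^k}^2 - 2\hat{\lambda}_k\iprods{s^k,r^k} + \norms{r^k}^2\bigr) = \hat{\lambda}_k^2\norms{s^k}^2,
\]
which rearranges to
\[
(1-\bar{\rho}^2)\hat{\lambda}_k^2\norms{s^k}^2 = \bar{\rho}^2\norms{r^k}^2 - 2\bar{\rho}^2\hat{\lambda}_k\iprods{s^k,r^k}.
\]
On the left, use $\norms{s^k}\geq\alpha_k\norms{r^k}$; on the right, use $\iprods{s^k,r^k}\geq\alpha_k\norms{r^k}^2$. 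Dividing by $\norms{r^k}^2$ and setting $u := \hat{\lambda}_k\alpha_k$, this gives the quadratic inequality
\[
(1-\bar{\rho}^2)u^2 + 2\bar{\rho}^2 u - \bar{\rho}^2 \leq 0.
\]
Its positive root is $\frac{-\bar{\rho}^2 + \bar{\rho}}{1-\bar{\rho}^2} = \frac{\bar{\rho}}{1+\bar{\rho}}$, so $u \leq \frac{\bar{\rho}}{1+\bar{\rho}}$. This means
\[
1+\hat{\varphi}_k \geq \frac{1+\bar{\rho}}{\bar{\rho}}\,\alpha_k = \frac{1+\bar{\rho}}{2\bar{\rho}}\bigl(\hat{\varphi}_{k-1}+2\bigr),
\]
which is $\hat{\varphi}_k \geq \frac{1+\bar{\rho}}{2\bar{\rho}}\hat{\varphi}_{k-1} + \frac{1}{\bar{\rho}}$. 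Unrolling with $c = \frac{2}{1-\bar{\rho}}$ then gives (i).
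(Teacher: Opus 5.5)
Your proof is correct and follows essentially the same route as the paper. Your bound $\iprods{s^k,r^k}\geq(\tfrac{\hat{\varphi}_{k-1}}{2}+1)\norms{r^k}^2$ is exactly the paper's bound $U_k:=\iprods{r^k,s^k}/\norms{r^k}^2\geq\frac{1+\hat{\lambda}_{k-1}}{2\hat{\lambda}_{k-1}}$, which the paper obtains from Lemma~\ref{le:key_properties}(ii) and $s^{k}=r^{k}+(1-\hat{\lambda}_{k-1})s^{k-1}$ instead of from part (iii), and both proofs then unroll the same affine recursions; in (i) you merely substitute this lower bound into the squared equality before solving the quadratic, whereas the paper first applies Cauchy--Schwarz to bound $\hat{\lambda}_kU_k\leq\frac{\bar{\rho}}{1+\bar{\rho}}$ and only then uses the bound on $U_k$.
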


\begin{proof}
(i). First, utilizing $s^{k+1}$ and $r^{k+1}$ from \eqref{eq:residuals}, and \eqref{eq:HP_iteration2}, we can easily show that
\begin{equation*}
s^{k+1} = x^0 - T(x^{k+1})  = s^k + T(x^k) - T(x^{k+1})  = r^{k+1} + (1 - \hat{\lambda}_k)s^k.
\end{equation*}	
This relation implies that $\iprods{ r^{k+1}, s^{k+1} } = \norms{ r^{k+1} }^2 + (1 - \hat{\lambda}_k)\iprods{ r^{k+1}, s^k}$.

Next, applying Lemma~\ref{le:key_properties}(ii), we get
\begin{equation*}
\iprods{ r^{k+1}, s^{k+1} }  \geq \norms{ r^{k+1} }^2 + \frac{1 - \hat{\lambda}_k}{ 2 \hat{\lambda}_k} \norms{ r^{k+1} }^2 = \frac{1 + \hat{\lambda}_k}{2\hat{\lambda}_k} \norms{ r^{k+1} }^2.
\end{equation*}
For all $k \geq 0$, let us define $U_k := \frac{\iprods{r^k, s^k}}{\norms{r^k}^2}$.
Then, we have $U_{k+1} = \frac{ \iprods{ r^{k+1}, s^{k+1} }}{ \norms{ r^{k+1} }^2 }$ and the last inequality implies that
\begin{equation}\label{eq:B_k}
U_{k+1}	\ge \frac{1 + \hat{\lambda}_k}{2\hat{\lambda}_k}.
\end{equation}
Since $\bar{\rho} \norms{\hat{\lambda}_ks^k - r^k}  = \hat{\lambda}_k\norms{s^k}$ holds for all $k \geq 0$, we have $\bar{\rho}^2\norms{\hat{\lambda}_{k+1}s^{k+1} - r^{k+1} }^2 = \hat{\lambda}_{k+1}^2\norms{s^{k+1}}^2$, which is equivalent to
\begin{equation}\label{eq:lm1_proof1}
2\hat{\lambda}_{k+1}\frac{\iprods{ r^{k+1}, s^{k+1} } }{\norms{ r^{k+1} }^2}  + \frac{(1 - \bar{\rho}^2)}{ \bar{\rho}^2} \hat{\lambda}_{k+1}^2\frac{\norms{ s^{k+1} }^2}{\norms{ r^{k+1}}^2 } = 1.
\end{equation}
By the Cauchy-Schwarz inequality and the definition of $U_k$, we can show that
\begin{equation}\label{eq:lm1_proof2}
U^2_{k+1} = \frac{\iprods{ r^{k+1}, s^{k+1}}^2}{\norms{ r^{k+1} }^4} \leq \frac{\norms{ r^{k+1}}^2 \norms{ s^{k+1}}^2}{\norms{ r^{k+1}}^4} = \frac{\norms{ s^{k+1}}^2}{\norms{ r^{k+1}}^2}.
\end{equation}
Combining \eqref{eq:lm1_proof1} and \eqref{eq:lm1_proof2}, we obtain the following inequality in $\hat{\lambda}_{k+1}U_{k+1}$.
\begin{equation*} 
2\hat{\lambda}_{k+1}U_{k+1} + \frac{(1 - \bar{\rho}^2)}{\bar{\rho}^2}  \hat{\lambda}_{k+1}^2U_{k+1}^2 \leq 1.
\end{equation*}
By solving this inequality, we obtain $\hat{\lambda}_{k+1}U_{k+1} \leq \frac{\bar{\rho}}{1+ \bar{\rho}}$ for all $k \geq 0$.  

Now, by utilizing the resulting inequality and \eqref{eq:B_k}, for all $k \geq 0$, we can show that
\begin{equation*} 
\hat{\lambda}_{k+1} \leq \frac{\bar{\rho}}{(1 + \bar{\rho})U_{k+1}} \leq \frac{2\bar{\rho}}{1 + \bar{\rho}} \cdot \frac{\hat{\lambda}_k}{1 + \hat{\lambda}_k}.  
\end{equation*}
Since $\hat{\lambda}_k = \frac{1}{\hat{\varphi}_k + 1}$, the last inequality leads to 
\begin{equation*}
\hat{\varphi}_{k+1} \geq \frac{1 + \bar{\rho} }{2 \bar{\rho} }\hat{\varphi}_k + \frac{1}{ \bar{\rho}} 
\qquad\Leftrightarrow \qquad 
\hat{\varphi}_{k+1} + \frac{2}{1 - \bar{\rho}} \geq \frac{1 + \bar{\rho}}{2\bar{\rho}}\Big(\hat{\varphi}_k + \frac{2}{1 - \bar{\rho} }\Big).
\end{equation*}
If we denote by $\hat{\psi}_k := \hat{\varphi}_k + \frac{2}{1 - \bar{\rho}}$, then the last inequality means that $\hat{\psi}_{k+1} \geq \big(\frac{1 + \bar{\rho}}{2\bar{\rho}} \big) \hat{\psi}_k$ for all $k\geq 0$.
By induction, we obtain $\hat{\psi}_k \geq \big(\frac{1 + \bar{\rho}}{2\bar{\rho}}\big)^k \hat{\psi}_0$.
Substituting $\hat{\varphi}_k = \hat{\psi}_k - \frac{2}{1 - \bar{\rho} }$ into this bound, we obtain (i).

(ii) 
Since $\bar{\rho} \norms{\hat{\lambda}_ks^k - r^k}  \leq \hat{\lambda}_k\norms{s^k}$ holds for all $k \geq 0$, Lemma~\ref{le:key_properties}(ii) still applies.
Similar to the proof of (i) above, we have $U_{k+1} \geq \frac{1 + \hat{\lambda}_k}{2\hat{\lambda}_k} \geq 1$ from \eqref{eq:B_k} for all $k \geq 0$ due to $0 < \hat{\lambda}_k < 1$.
Moreover,  since $s^0 = r^0$, $U_0 := \frac{\iprods{r^0, s^0}}{\norms{r^0}^2} = 1$.
We conclude that $U_k \geq 1$ for all $k \geq 0$.

The condition $2\omega\hat{\lambda}_k\iprods{s^k, r^k} \leq \norms{r^k}^2$ is equivalent to $\hat{\lambda}_kU_k \leq \frac{1}{2\omega}$.
Since it holds for all $k \geq 1$, we have $U_{k+1} \leq \frac{1}{2\omega\hat{\lambda}_{k+1}}$ for all $k \geq 0$.
Combining this inequality and $U_{k+1} \geq \frac{1 + \hat{\lambda}_k}{2\hat{\lambda}_k}$, we obtain $\frac{1}{\hat{\lambda}_{k+1}} \geq \frac{\omega(1 + \hat{\lambda}_k)}{\hat{\lambda}_k}$.
Substituting $\frac{1}{\hat{\lambda}_k} = \hat{\varphi}_k + 1$ into this inequality, we get
\begin{equation*}
\hat{\varphi}_{k+1} + 1 \geq \omega(\hat{\varphi}_k + 2) \quad \Leftrightarrow \quad \hat{\varphi}_{k+1} + \frac{2\omega-1}{\omega - 1} \geq \omega\left(\hat{\varphi}_k +  \frac{2\omega-1}{\omega - 1} \right).
\end{equation*}
By induction, we have $\hat{\varphi}_k \geq \omega^k\big(\hat{\varphi}_0 + \frac{2\omega-1}{\omega-1}\big) -  \frac{2\omega-1}{\omega - 1}$, which proves (ii).
\end{proof}

Our next objective is to derive an implementable update rule for $\hat{\lambda}_k$ that does not require the exact contraction factor $\rho$ of $T$, but only an upper bound $\bar{\rho}$ of $\rho$. 
At the same time, the resulting update rule should preserve the linear convergence guarantee of the proposed method. 
The following lemma establishes the key ingredients needed to derive such an update rule.

\begin{lemma}\label{le:implementable_step}
Let $\sets{x^k}_{k \geq 0}$ be updated by \eqref{eq:HP_iteration2} and $\sets{\hat{\varphi}_k} \subset (0, +\infty)$ be such that $\hat{\lambda}_k = \frac{1}{1 + \hat{\varphi}_k}$.
Let $r^k$ and $s^k$ be defined by \eqref{eq:residuals}.
Then, the following statements hold.
\begin{compactitem}
\item[$\mathrm{(i)}$]
If we choose $\hat{\lambda}_0 \geq \frac{\bar{\rho}}{1 + \bar{\rho}}$, then $\bar{\rho} \norms{\hat{\lambda}_0s^0 - r^0}  \leq \hat{\lambda}_0\norms{s^0}$ holds.

\item[$\mathrm{(ii)}$]
If $\rho_0 :=  \frac{\norms{x^1 - T(x^0)}}{\norms{x^1 - x^0}}$, then for any $\hat{\lambda}_0 \in \big[\frac{ \bar{\rho}}{1 + \bar{\rho}}, \frac{1}{2} \big)$, we have $\rho_0 = \frac{\hat{\lambda}_0}{1-\hat{\lambda}_0}$ and $\rho \leq \bar{\rho} \leq \rho_0 < 1$.

\item[$\mathrm{(iii)}$] If $r^k \neq 0$ for all $k \geq 0$ and $\hat{\lambda}_{k-1} \in (0, 1)$ for all $k \geq 1$, then $s^k \neq 0$ for all $k \geq 0$.

\item[$\mathrm{(iv)}$] For all $k \geq 1$, let $\hat{\lambda}_k$ and $\rho_k$ respectively be updated by
\begin{equation}\label{eq:lambda_k_update}
\arraycolsep=0.2em
\begin{array}{lcl}
\hat{\lambda}_k & := & \frac{\rho_{k-1}\norms{r^k}^2 }{ \rho_{k-1} \iprods{r^k, s^k} + \sqrt{\rho_{k-1}^2\iprods{r^k, s^k}^2 + (1 - \rho_{k-1}^2 )\norms{r^k}^2\norms{s^k}^2} }, \vspace{1ex}\\
\rho_k  & := & \max\set{\rho_{k-1}, \tau_k}, \quad \textrm{where} \  \tau_k := \frac{\norms{T(x^{k+1}) - T(x^k)}}{\norms{x^{k+1} - x^k}} \ \textrm{or} \ \tau_k := \frac{\norms{x^{k+1} - T(x^k)}}{\norms{x^{k+1} - x^k}},
\end{array}
\end{equation}	
then both $\rho_k \geq \bar{\rho}$ and $\bar{\rho} \norms{\hat{\lambda}_ks^k - r^k}  \leq \hat{\lambda}_k\norms{s^k}$ hold.

\item[$\mathrm{(v)}$]
If we define $\omega := \frac{1 + \rho_0}{2\rho_0}$, then $\omega > 1$ and $2\omega\hat{\lambda}_k\iprods{s^k, r^k} \leq \norms{r^k}^2$ for all $k \geq 1$.
\end{compactitem}
\end{lemma}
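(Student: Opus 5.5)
The plan is to treat each item of Lemma~\ref{le:implementable_step} as a short computation with the identities already derived in Section~\ref{subsec:A2_derivation}: $x^{k+1}-T(x^k)=\hat{\lambda}_ks^k$, $x^{k+1}-x^k=\hat{\lambda}_ks^k-r^k$, and $s^{k+1}=r^{k+1}+(1-\hat{\lambda}_k)s^k$. Items (iii)--(v) also depend on the descent condition \eqref{eq:descent_cond}. I would therefore prove (i) and (ii) first, and then run an induction on $k$ in which (iii) and (iv) are established together.

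For (i), I use $s^0=x^0-T(x^0)=r^0$. Then \eqref{eq:descent_cond} at $k=0$ becomes $\bar{\rho}(1-\hat{\lambda}_0)\norms{r^0}\le\hat{\lambda}_0\norms{r^0}$, which is equivalent to $\hat{\lambda}_0\ge\frac{\bar{\rho}}{1+\bar{\rho}}$. For (ii), the identities give $x^1-T(x^0)=\hat{\lambda}_0r^0$ and $x^1-x^0=-(1-\hat{\lambda}_0)r^0$, so $\rho_0=\frac{\hat{\lambda}_0}{1-\hat{\lambda}_0}$. The map $t\mapsto\frac{t}{1-t}$ is increasing, so $\hat{\lambda}_0\ge\frac{\bar{\rho}}{1+\bar{\rho}}$ gives $\rho_0\ge\bar{\rho}\ge\rho$, and $\hat{\lambda}_0<\frac12$ gives $\rho_0<1$.

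For (iv), I rewrite \eqref{eq:descent_cond} with a generic constant $c\in(0,1)$ in place of $\bar{\rho}$ as
\[
f_c(\lambda):=\lambda^2\norms{s^k}^2-c^2\norms{\lambda s^k-r^k}^2=(1-c^2)\lambda^2\norms{s^k}^2+2c^2\lambda\iprods{r^k,s^k}-c^2\norms{r^k}^2\ge 0 .
\]
By (iii), $s^k\neq 0$, so for $c=\rho_{k-1}$ this is a quadratic in $\lambda$ with $f_c(0)<0$. Its positive root, after rationalizing the numerator, is exactly the formula \eqref{eq:lambda_k_update}. Hence $f_{\rho_{k-1}}(\hat{\lambda}_k)=0$. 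Since $f_c(\lambda)$ is nonincreasing in $c$, it then suffices to know $\rho_{k-1}\ge\bar{\rho}$, and this follows by induction. The base case is $\rho_0\ge\bar{\rho}$ from (ii), and the step holds because $\rho_k=\max\{\rho_{k-1},\tau_k\}\ge\rho_{k-1}$. This gives $f_{\bar{\rho}}(\hat{\lambda}_k)\ge0$, which is \eqref{eq:descent_cond}.

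For (iii), the descent condition now holds at every step up to $k$, by (i) and by (iv) at earlier indices. The argument in the proof of Lemma~\ref{le:varphi_k_bounds}(ii) then yields $\iprods{r^k,s^k}\ge\frac{1+\hat{\lambda}_{k-1}}{2\hat{\lambda}_{k-1}}\norms{r^k}^2>0$ whenever $r^k\neq0$. This forces $s^k\neq0$, and $s^0=r^0\ne0$ covers the base case. So (iii) and (iv) are proved jointly by induction.

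For (v), $\rho_0<1$ gives $\omega=\frac{1+\rho_0}{2\rho_0}>1$. By Cauchy--Schwarz, $(1-\rho_{k-1}^2)\norms{r^k}^2\norms{s^k}^2\ge(1-\rho_{k-1}^2)\iprods{r^k,s^k}^2$, so the square root in \eqref{eq:lambda_k_update} is at least $\iprods{r^k,s^k}$, which is positive by (iii). This gives
\[
\hat{\lambda}_k\iprods{r^k,s^k}\le\frac{\rho_{k-1}}{1+\rho_{k-1}}\norms{r^k}^2 .
\]
\textbf{This is the main obstacle.} The bound has $\rho_{k-1}$ in place of $\rho_0$, and $t\mapsto\frac{t}{1+t}$ is increasing while $\rho_{k-1}\ge\rho_0$, so this estimate alone only delivers $\omega_{k}=\frac{1+\rho_{k-1}}{2\rho_{k-1}}$. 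I would first try to show that $\rho_k=\rho_0$ actually persists, or that $\tau_k\le\rho_0$, along the iterates. If that fails, I would replace $\rho_0$ in the definition of $\omega$ by $\sup_k\rho_k$ and prove that this supremum stays below $1$, for instance via the capped variant analogous to \eqref{eq:update_omega_k}. In either case Lemma~\ref{le:varphi_k_bounds}(ii) then applies with that uniform $\omega>1$.
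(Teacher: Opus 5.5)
\textbf{Items (i)--(iv).} Your arguments for (i)--(iv) follow the paper's proof. Your joint induction on (iii) and (iv) is actually more careful than the paper. The paper's proof of (iii) invokes Lemma~\ref{le:key_properties}(ii) at index $k-1$ before (iv) has supplied its hypothesis. Its proof of (v) also divides by $\iprods{s^k,r^k}$ without establishing the sign, which your induction does establish.

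\textbf{The gap is in (v).} You reduce (v) to the inequality $\rho_{k-1}\le\rho_0$ but never prove it. Your fallback of redefining $\omega$ through $\sup_k\rho_k$ or a capped update would prove a different statement, because the lemma fixes $\omega=\frac{1+\rho_0}{2\rho_0}$. You are right that the $\max$ rule by itself only gives $\rho_{k-1}\ge\rho_0$. The paper simply asserts that ``the update rule implies $\rho_k\le\rho_0$.'' Combined with the $\max$, that assertion means $\rho_k=\rho_0$ for all $k$, and this is exactly what you still have to show.

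\textbf{How to close it.} Argue by induction. Assume $\rho_{k-1}=\rho_0$; then it suffices to show $\tau_k\le\rho_0$.
\begin{itemize}
\item For the first choice of $\tau_k$, contractivity and (ii) give $\tau_k\le\rho\le\bar\rho\le\rho_0$.
\item For the second choice, use $x^{k+1}-T(x^k)=\hat\lambda_ks^k$ and $x^{k+1}-x^k=\hat\lambda_ks^k-r^k$. In (iv) you already showed $f_{\rho_{k-1}}(\hat\lambda_k)=0$, that is, $\hat\lambda_k\norms{s^k}=\rho_{k-1}\norms{\hat\lambda_ks^k-r^k}$. The right-hand norm is nonzero because $s^k\neq0$ and $\hat\lambda_k>0$. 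Hence $\tau_k=\rho_{k-1}=\rho_0$ exactly.
\end{itemize}
So $\rho_k=\rho_0<1$ for every $k$. Your estimate then reads $\hat\lambda_k\iprods{r^k,s^k}\le\frac{\rho_0}{1+\rho_0}\norms{r^k}^2$, which is precisely $2\omega\hat\lambda_k\iprods{s^k,r^k}\le\norms{r^k}^2$.

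The same fact also keeps $\rho_{k-1}<1$ throughout. You tacitly need this in two places: for the quadratic $f_{\rho_{k-1}}$ in (iv) to have a positive leading coefficient, and for the bound $\sqrt{\rho_{k-1}^2\iprods{r^k,s^k}^2+(1-\rho_{k-1}^2)\norms{r^k}^2\norms{s^k}^2}\ge\iprods{r^k,s^k}$ that you use in (v).
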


\begin{proof}
(i). Since $s^0 = r^0$, the condition $\bar{\rho} \norms{\hat{\lambda}_0s^0 - r^0}  \leq \hat{\lambda}_0\norms{s^0}$ is equivalent to $\bar{\rho}\vert\hat{\lambda}_0 - 1\vert \leq \hat{\lambda}_0$.
However, because $\hat{\lambda}_0 \in (0, 1]$, the last condition is equivalent to $\hat{\lambda}_0 \geq \frac{\bar{\rho}}{1 + \bar{\rho}}$.
 
 (ii). Substituting $x^1 = \hat{\lambda}_0x^0 + (1-\hat{\lambda}_0)T(x^0)$ from \eqref{eq:HP_iteration2} and $s^0 = r^0$ into $\rho_0 =  \frac{\norms{x^1 - T(x^0)}}{\norms{x^1 - x^0}}$ we get $\rho_0 = \frac{\hat{\lambda}_0}{1-\hat{\lambda}_0}$.
 Since $\hat{\lambda}_0 \in \big[\frac{ \bar{\rho}}{1 + \bar{\rho}}, \frac{1}{2}\big)$, we can easily check that that $\rho_0  < 1$.
 Moreover, since $\bar{\rho} \norms{\hat{\lambda}_0s^0 - r^0}  \leq \hat{\lambda}_0\norms{s^0}$ by (i), we conclude that Lemma~\ref{le:key_properties}(i) holds at $k = 0$, i.e., $\rho_0 =  \frac{\norms{x^1 - T(x^0)}}{\norms{x^1 - x^0}} \geq \bar{\rho}$.
 
 (iii)~We prove this statement by induction.
 At $k = 0$, since $r^0 \neq 0$, we have $\norms{s^0} = \norms{r^0} = \norms{x^0 - T(x^0)} > 0$.
 Assume that for any $k \geq 1$, we have $s^{k-1} \neq 0$, we will prove that $s^k \neq 0$.
 Indeed, from the definitions of $s^k$ and $r^k$, and \eqref{eq:HP_iteration2}, we have 
\begin{equation*}
\arraycolsep=0.2em
\begin{array}{lcl}
s^k &= & x^0 - T(x^k) = x^0 - x^k + x^k - T(x^k) = x^0 - \hat{\lambda}_{k-1}x^0 - (1-\hat{\lambda}_{k-1})T(x^{k-1}) + r^k \vspace{1ex}\\
& = & (1-\hat{\lambda}_{k-1})(x^0 - T(x^{k-1})) + r^k =  (1-\hat{\lambda}_{k-1})s^{k-1} + r^k.
\end{array}
\end{equation*}
Thus, we have $\norms{s^k}^2 = (1-\hat{\lambda}_{k-1})^2\norms{s^{k-1}}^2 + 2(1-\hat{\lambda}_{k-1})\iprods{s^{k-1}, r^k} + \norms{r^k}^2$.
Combining this relation and Lemma~\ref{le:key_properties}(ii), we have
\begin{equation*}
\arraycolsep=0.2em
\begin{array}{lcl}
\norms{s^k}^2 & \geq &  (1-\hat{\lambda}_{k-1})^2\norms{s^{k-1}}^2 + \frac{1-\hat{\lambda}_{k-1}}{\hat{\lambda}_{k-1}}\norms{r^k}^2 + \norms{r^k}^2 = (1-\hat{\lambda}_{k-1})^2\norms{s^{k-1}}^2 + \frac{1}{\hat{\lambda}_{k-1}}\norms{r^k}^2.
\end{array}
\end{equation*}
Since $r^{k-1} \neq 0$, $s^{k-1} \neq 0$, and $\hat{\lambda}_{k-1} \in (0, 1)$, the last inequality implies that $s^k \neq 0$.

(iv)~We prove this statement by induction.
 For $k = 1$, from (ii), we have $\rho_0 \geq \bar{\rho}$.
 Using this fact, we conclude that $\bar{\rho} \norms{\hat{\lambda}_1s^1 - r^1}  \leq \hat{\lambda}_1\norms{s^1}$ holds if impose that $\rho_0 \norms{\hat{\lambda}_1s^1 - r^1}  = \hat{\lambda}_1\norms{s^1}$.
The latter equation becomes $(1-\rho_0^2)\hat{\lambda}_1^2\norms{s^1}^2 + 2\rho_0\hat{\lambda}_1\iprods{s^1, r^1} - \rho_0\norms{r^1}^2 = 0$.
Clearly, since $\norms{s^1} > 0$,  $\hat{\lambda}_1$ given by \eqref{eq:lambda_k_update} is the positive solution of this equation. 
Therefore, it satisfies such an equation, and we obtain $\bar{\rho} \norms{\hat{\lambda}_1s^1 - r^1}  \leq \hat{\lambda}_1\norms{s^1}$.
This statement and the fact that $\rho_0 \geq \bar{\rho}$ imply that (iv) holds for $k = 1$.

Suppose that (iv) holds for any $k > 1$, i.e.,  $\rho_{k} \geq \bar{\rho}$ and $\bar{\rho} \norms{\hat{\lambda}_ks^k - r^k}  \leq \hat{\lambda}_k\norms{s^k}$.
We prove that it holds with $k + 1$.

First, by the update rule \eqref{eq:lambda_k_update} and the induction assumption $\rho_k \geq \bar{\rho}$, we have $\rho_{k+1} \geq \rho_k \geq \bar{\rho}$.

Second, since $\rho_k \geq \bar{\rho}$, if we impose $\rho_k \norms{\hat{\lambda}_{k+1}s^{k+1} - r^{k+1}}  = \hat{\lambda}_{k+1}\norms{s^{k+1}}$ then $\bar{\rho} \norms{\hat{\lambda}_{k+1}s^{k+1} - r^{k+1}}  \leq \hat{\lambda}_{k+1}\norms{s^{k+1}}$.
Clearly, as shown in the case $k=1$, $\hat{\lambda}_{k+1}$ computed by \eqref{eq:lambda_k_update} guarantees that $\rho_k \norms{\hat{\lambda}_{k+1}s^{k+1} - r^{k+1}}  = \hat{\lambda}_{k+1}\norms{s^{k+1}}$.
Thus, we conclude that (iv) holds for $k + 1$.

 (v). To guarantee $2\omega\hat{\lambda}_k\iprods{s^k, r^k} \leq \norms{r^k}^2$ for all $k \geq 1$, we need to choose $\omega$ such that
\begin{equation*}
\omega \leq \omega_k := \frac{\norms{r^k}^2}{2\hat{\lambda}_k\iprods{s^k, r^k}} = \frac{1}{2} + \frac{\sqrt{\rho_{k-1}^2\iprods{r^k, s^k}^2 + (1 - \rho_{k-1}^2 )\norms{r^k}^2\norms{s^k}^2} }{2\rho_{k-1}\iprods{s^k, r^k}}.
\end{equation*}
Since $\iprods{s^k, r^k} \leq \norms{s^k}\norms{r^k}$ by the Cauchy-Schwarz inequality, the last expression shows that $\omega_k \geq \frac{1}{2} + \frac{1}{2\rho_{k-1}}$.
Note that the update rule \eqref{eq:lambda_k_update} implies that $\rho_k \leq \rho_0$ for all $k \geq 0$.
Therefore, we have $\omega_k \geq \frac{1}{2} + \frac{1}{2\rho_{k-1}} \geq \frac{1+\rho_0}{2\rho_0}$.
Clearly, since $\rho_0 < 1$, if we choose $\omega :=  \frac{1 + \rho_0}{2\rho_0}$, then $\omega > 1$ and $2\omega\hat{\lambda}_k\iprods{s^k, r^k} \leq \norms{r^k}^2$ holds for all $k \geq 1$.
\end{proof}

Finally, since $\hat{\lambda}_0 = \frac{1}{\hat{\varphi}_0 + 1}$, if we choose $\hat{\varphi}_0 \in \big(1, \frac{1}{ \bar{\rho} } \big]$, then $\frac{\bar{\rho}}{1 + \bar{\rho}} \leq \hat{\lambda}_0 < \frac{1}{2}$.
Moreover, we also have $\rho_0 = \frac{\hat{\lambda}_0}{1-\hat{\lambda}_0} = \frac{1}{\hat{\varphi}_0}$.
Combining these facts, Lemma~\ref{le:implementable_step}, and the scheme \eqref{eq:HP_iteration2}, we can describe the following implementable version of the adaptive Halpern fixed-point iteration in Algorithm~\ref{alg:adaptive_HP}.

\begin{algorithm}[!htbp]
\caption{(Adaptive Halpern Fixed-Point Algorithm)}\label{alg:adaptive_HP}
\begin{algorithmic}[1]
\State\textbf{Initialization:} Choose an arbitrary initial point $x^0\in\mathcal H$.
\State\label{alg2:step2} Choose an initial value $\hat{\varphi}_0 \in \big(1, \frac{1}{ \bar{\rho} }\big]$.
Compute $\hat{\lambda}_0 : = \frac{1}{\hat{\varphi}_0 + 1}$ and set $\rho_0 := \frac{1}{\hat{\varphi}_0}$.
\State Update $x^1 :=  \hat{\lambda}_0x^0 + (1 - \hat{\lambda}_0) T(x^0)$.
\State \textbf{For $k = 1, 2, \cdots$, perform}
\State\hspace{3ex}Compute $r^k := x^k - T(x^k)$ and $s^k := x^0 - T(x^k)$.
\State\hspace{3ex}If ~$\norms{r^k} = 0$,  then TERMINATE.
\State\hspace{3ex}\label{alg2:step7}Compute $\hat{\lambda}_k \in (0, 1)$ as follows: 
\begin{equation*} 
\hat{\lambda}_k := \frac{\rho_{k-1}\norms{r^k}^2 }{ \rho_{k-1} \iprods{r^k, s^k} + \sqrt{\rho_{k-1}^2\iprods{r^k, s^k}^2 + (1 - \rho_{k-1}^2 )\norms{r^k}^2\norms{s^k}^2} }.
\end{equation*}	
\State\hspace{3ex}\label{alg2:step8}Update
\begin{equation*}
x^{k+1} :=  \hat{\lambda}_kx^0 + (1 - \hat{\lambda}_k) T(x^k).
\end{equation*}
\State\hspace{3ex}\label{alg2:step9}Update $\rho_{k} := \max\left\{ \rho_{k-1}, \tau_k \right\}$, where $\tau_k := \frac{\norms{T(x^{k+1}) - T(x^k)} }{ \norms{x^{k+1} - x^k} }$ or $\tau_k := \frac{\norms{x^{k+1} - T(x^k)} }{ \norms{x^{k+1} - x^k} }$.
\State\textbf{End For}
\end{algorithmic}
\end{algorithm}

\noindent
Let us clarify several aspects of Algorithm~\ref{alg:adaptive_HP}.
\begin{compactitem}
\item Step~\ref{alg2:step2} requires an upper bound $\bar{\rho}$ of the contraction factor $\rho$ in order to define the interval $\big(1, \frac{1}{\bar{\rho}}\big]$ for selecting $\hat{\varphi}_0$. 
If such an upper bound is available, we simply choose $\hat{\varphi}_0 := \frac{1}{\bar{\rho}}$. 
Otherwise, a practical choice is to set $\hat{\varphi}_0 := 1 + \epsilon$ for a sufficiently small $\epsilon > 0$, for example, $\epsilon := 10^{-6}$.

\item Step~\ref{alg2:step7} adds only a negligible computational overhead to the algorithm. 
The main computations consist of one inner product $\iprods{s^k, r^k}$ and two vector norms $\norms{s^k}$ and $\norms{r^k}$, all of which require only $\BigOs{p}$ elementary operations.

\item Step~\ref{alg2:step8} requires the computation of the two norms $\norms{T(x^{k+1}) - T(x^k)}$ and $\norms{x^{k+1} - x^k}$, together with a few basic arithmetic operations. For improved numerical stability, we recommend replacing the update by first computing $\tau_k := \frac{\norms{T(x^{k+1}) - T(x^k)}}{\norms{x^{k+1} - x^k}}$, and then setting $\rho_k := \min\sets{1,\max\sets{\rho_{k-1},\tau_k}}$.
Since $T$ is $\rho$-contractive, we always have $\tau_k \leq \rho$. Consequently, if the initial estimate satisfies $\rho_0 \geq \bar{\rho}$, then $\rho_k=\rho_0$ for all $k\ge0$. 
In practice, however, $\rho_0$ is typically chosen according to \eqref{eq:omega_0_choice}, which may result in $\rho_0\leq\rho$. In this case, the update rule in Step~\ref{alg2:step8} generates a nondecreasing sequence $\rho_0\leq\rho_1\leq\cdots\leq\rho_k\leq\rho$, which provides progressively more accurate estimates of the contraction factor.
\end{compactitem}
Finally, when $\rho=\bar{\rho}=1$, the update of $\hat{\lambda}_k$ in Step~\ref{alg2:step7} reduces to $\hat{\lambda}_k=\frac{\norms{r^k}^2}{2\iprods{s^k,r^k}}$,
which coincides exactly with the parameter update in \cite[Algorithm~3.1]{he2024convergence} for nonexpansive mappings.

\subsection{Convergence analysis}\label{subsec:A2_convergence}
We establish linear convergence rates of Algorithm~\ref{alg:adaptive_HP} in the following theorem.
	
\begin{theorem}\label{th:convergence_of_adaptive_HP}
Let $ T: \Hil \to \Hil$ be a $\rho$-contractive mapping such that $\mathrm{Fix}(T) \neq \emptyset$ and $\rho \in (0, 1)$.
Let $\bar{\rho}$ be an upper bound of $\rho$ such that $\rho \leq \bar{\rho} < 1$.
Let  $\sets{x^k}_{k\geq 0}$ be generated by Algorithm \ref{alg:adaptive_HP} such that $\hat{\varphi}_0 \in \big(1, \frac{1}{\bar{\rho}}\big]$. 
Then, for all $k \geq 0$, we have 
\begin{equation}\label{eq:convergence_rate}
\norms{ x^k - T(x^k) } \leq \frac{2}{C_k} \norms{x^0 - x^{\star} } \quad \text{and} \quad \norms{x^k - x^{\star}} \leq \frac{2}{(1-\rho)C_k} \norms{x^0 - x^{\star} },
\end{equation}
where $C_k := \big(\frac{1 + \hat{\varphi}_0}{2}\big)^k \big(\hat{\varphi}_0 + \frac{2\hat{\varphi}_0}{\hat{\varphi}_0 -1}\big) - \frac{\hat{\varphi}_0 +  1}{\hat{\varphi}_0 - 1}$ and $x^{\star} \in \mathrm{Fix}(T)$.
Consequently, $\sets{x^k}$ strongly converges to the unique fixed-point $x^{\star}$.
\end{theorem}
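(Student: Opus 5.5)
The plan is to combine the lower bound on $\hat{\varphi}_k$ from Lemma~\ref{le:varphi_k_bounds}(ii) with a residual estimate obtained from Lemma~\ref{le:key_properties}(iii) and the co-coercivity of $\Id - T$. Three steps are needed.

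\textbf{Step 1: the hypotheses of the two lemmas hold along Algorithm~\ref{alg:adaptive_HP}.} Since $\hat{\varphi}_0 \in \big(1, \frac{1}{\bar{\rho}}\big]$, we have $\hat{\lambda}_0 \in \big[\frac{\bar{\rho}}{1+\bar{\rho}}, \frac12\big)$. Lemma~\ref{le:implementable_step}(i)--(ii) then gives the descent condition \eqref{eq:descent_cond} at $k=0$ and $\bar{\rho} \leq \rho_0 = 1/\hat{\varphi}_0 < 1$. Lemma~\ref{le:implementable_step}(iii)--(iv) gives \eqref{eq:descent_cond} for all $k \geq 1$, as long as the algorithm has not terminated. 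Lemma~\ref{le:implementable_step}(v) gives $2\omega\hat{\lambda}_k\iprods{s^k,r^k} \leq \norms{r^k}^2$ with
\begin{equation*}
\omega = \frac{1+\rho_0}{2\rho_0} = \frac{1+\hat{\varphi}_0}{2} > 1 .
\end{equation*}

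\textbf{Step 2: lower bound on $\hat{\varphi}_k$.} Lemma~\ref{le:varphi_k_bounds}(ii) applies. A direct computation gives $\frac{2\omega-1}{\omega-1} = \frac{2\hat{\varphi}_0}{\hat{\varphi}_0-1}$, hence
\begin{equation*}
\hat{\varphi}_k + 1 \geq \Big(\frac{1+\hat{\varphi}_0}{2}\Big)^k\Big(\hat{\varphi}_0 + \frac{2\hat{\varphi}_0}{\hat{\varphi}_0-1}\Big) - \frac{2\hat{\varphi}_0}{\hat{\varphi}_0-1} + 1 = C_k ,
\end{equation*}
because $\frac{2\hat{\varphi}_0}{\hat{\varphi}_0-1} - 1 = \frac{\hat{\varphi}_0+1}{\hat{\varphi}_0-1}$. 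This identifies the constant $C_k$ of the statement.

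\textbf{Step 3: residual bound.} Because $T$ is nonexpansive, $\Id - T$ is $\frac12$-co-coercive. With $x^{\star} \in \mathrm{Fix}(T)$ this gives $\iprods{r^{k+1}, x^{k+1}-x^{\star}} \geq \frac12\norms{r^{k+1}}^2$. Write $x^0 - x^{k+1} = (x^0 - x^{\star}) - (x^{k+1} - x^{\star})$ inside Lemma~\ref{le:key_properties}(iii) and apply Cauchy--Schwarz:
\begin{equation*}
\norms{r^{k+1}}^2 \leq \frac{2}{\hat{\varphi}_k}\Big(\norms{r^{k+1}}\norms{x^0-x^{\star}} - \tfrac12\norms{r^{k+1}}^2\Big).
\end{equation*}
Rearranging yields $\norms{r^{k+1}} \leq \frac{2}{\hat{\varphi}_k+1}\norms{x^0-x^{\star}}$.

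The distance bound then follows exactly as at the end of the proof of Theorem~\ref{th:convergence_theorem1}, via $\norms{x^k - x^{\star}} \leq \frac{1}{1-\rho}\norms{r^k}$. Strong convergence follows because $C_k \to \infty$, since $\frac{1+\hat{\varphi}_0}{2} > 1$. If the algorithm terminates at some step, then $x^k = x^{\star}$ and there is nothing to prove.

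\textbf{Main obstacle: index alignment.} Step 3 gives $\norms{r^k} \leq \frac{2}{\hat{\varphi}_{k-1}+1}\norms{x^0-x^{\star}}$, while Step 2 bounds $\hat{\varphi}_k + 1$, not $\hat{\varphi}_{k-1} + 1$, by $C_k$. I would first try to gain one factor from the recursion $\hat{\varphi}_{k}+1 \geq \omega(\hat{\varphi}_{k-1}+2)$ obtained in the proof of Lemma~\ref{le:varphi_k_bounds}(ii), or else sharpen Step 3 using $U_k \geq \frac{1+\hat{\lambda}_{k-1}}{2\hat{\lambda}_{k-1}}$ together with $\hat{\lambda}_kU_k \leq \frac{1}{2\omega}$.

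If neither works, the bound holds with $C_{k-1}$ in place of $C_k$. This changes the stated constant only by the factor $\omega$, up to the additive correction in $C_k$, and leaves the linear rate $\omega^{-k}$ unchanged.

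The case $k=0$ also needs separate care. Here $C_0 = \hat{\varphi}_0 + 1$, while in general only $\norms{r^0} \leq (1+\rho)\norms{x^0 - x^{\star}}$ is available. So the claim at $k=0$ may require the convention that the estimate is asserted for $k \geq 1$, or the bound taken from Step 3 with $\hat{\varphi}_{-1} := 0$.
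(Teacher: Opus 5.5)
Your Steps 1--3 are correct and follow the paper's route. The paper gets the residual estimate from He et al.'s identity, which is the same computation as your use of the $\frac12$-co-coercivity of $\Id-T$ plus Cauchy--Schwarz. What this argument proves is $\norms{x^{k+1}-T(x^{k+1})}\le\frac{2}{\hat{\varphi}_k+1}\norms{x^0-x^{\star}}\le\frac{2}{C_k}\norms{x^0-x^{\star}}$ for all $k\ge0$, i.e.\ the claimed bound with $C_{k-1}$ in place of $C_k$.

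The index obstacle you flag is genuine and cannot be repaired, because the statement as written is false. The paper's proof does not resolve it; it commits exactly the misindexing you worried about. It inserts $\varphi:=\hat{\varphi}_k$, $a:=x^k-T(x^k)$, $b:=x^k-x^0$ into the identity and cites Lemma~\ref{le:key_properties}(iii) for $\hat{\varphi}_k\norms{r^k}^2+2\iprods{r^k,x^k-x^0}\le0$. That lemma, however, pairs $r^k$ with $\hat{\varphi}_{k-1}$. With $\hat{\varphi}_k$ the inequality is in fact violated whenever $r^k\neq0$:
\begin{itemize}
\item for $k=0$ it reads $\hat{\varphi}_0\norms{r^0}^2\le0$;
\item for $k\ge1$, since $x^0-x^k=s^k-r^k$, it is equivalent to $\hat{\lambda}_kU_k\ge\frac{1+\hat{\lambda}_k}{2}$ with $U_k:=\iprods{r^k,s^k}/\norms{r^k}^2$, whereas Lemma~\ref{le:implementable_step}(v) gives $\hat{\lambda}_kU_k\le\frac{1}{2\omega}<\frac12$.
\end{itemize}

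Concretely, take $\Hil=\R$, $T(x)=-\rho x$, $x^{\star}=0$, $x^0=1$.
\begin{itemize}
\item At $k=0$ the claim reads $\norms{r^0}\le\frac{2}{1+\hat{\varphi}_0}<1$, but $\norms{r^0}=1+\rho$.
\item At $k=1$, choose $\rho=0.05$, $\bar{\rho}=\frac19$, $\hat{\varphi}_0=9$. Then $x^1=0.055$ and $\norms{r^1}=0.05775$, while $C_1=\frac{(1+\hat{\varphi}_0)(2+\hat{\varphi}_0)}{2}=55$ gives $\frac{2}{C_1}\approx0.036$. The distance bound fails as well ($0.055$ versus about $0.038$). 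The shifted bound $\frac{2}{C_0}=0.2$ holds.
\end{itemize}

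So both of your proposed repairs are doomed. The recursion $\hat{\varphi}_k+1\ge\omega(\hat{\varphi}_{k-1}+2)$ bounds $\hat{\varphi}_k$ from below and cannot strengthen the lower bound on $\hat{\varphi}_{k-1}$. Commit to your fallback instead: for $k\ge1$,
\[
\norms{x^k-T(x^k)}\le\frac{2}{C_{k-1}}\norms{x^0-x^{\star}}
\quad\text{and}\quad
\norms{x^k-x^{\star}}\le\frac{2}{(1-\rho)C_{k-1}}\norms{x^0-x^{\star}}.
\]
The case $k=0$ should be dropped or replaced by the trivial $\norms{r^0}\le(1+\rho)\norms{x^0-x^{\star}}$. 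The linear rate $\big(\frac{1+\hat{\varphi}_0}{2}\big)^{-k}$ and the strong-convergence conclusion are unaffected.
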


\begin{proof}
First, by Lemma~\ref{le:implementable_step}(iv), we have $\omega := \frac{1 + \rho_0}{2\rho_0} > 1$.
Moreover, Lemma~\ref{le:implementable_step} shows that both conditions of Lemma~\ref{le:varphi_k_bounds}(ii) are satisfied.
Thus, we conclude that $\hat{\varphi}_k \geq \omega^k\big(\hat{\varphi}_0 + \frac{2\omega-1}{\omega-1}\big) -  \frac{2\omega-1}{\omega - 1}$.
Since $\omega = \frac{1 + \rho_0}{2\rho_0}$ and $\rho_0 = \frac{1}{\hat{\varphi}_0}$, we get $\hat{\varphi}_k + 1 \geq \big( \frac{1 + \hat{\varphi}_0}{2} \big)^k\big(\hat{\varphi}_0 + \frac{2\hat{\varphi}_0}{\hat{\varphi}_0-1}\big) - \frac{\hat{\varphi}_0 + 1}{\hat{\varphi}_0 - 1}$.
Moreover, since $\hat{\varphi}_0 \in \big(1, \frac{1}{ \bar{\rho} }\big]$, it is obvious to check that $\hat{\lambda}_0 = \frac{1}{1 + \hat{\varphi}_0} \in \big[\frac{\bar{\rho}}{1 + \bar{\rho}}, \frac{1}{2}\big)$.

Next, following the proof in \cite[Theorem 3.2]{he2024convergence}, it is easy to verify that the equality:
\begin{equation*} 
\varphi \norms{a}^2 + 2\iprods{a, b} + \norms{ c }^2 -\norms{a + c}^2 = \tfrac{\varphi+1}{2}\norms{ a}^2 - \tfrac{2}{\varphi+1}\norms{a + c - b}^2 + \frac{2}{\varphi+1}\norms{ a + c - b - \tfrac{\varphi+1}{2}a}^2
\end{equation*}
holds for all $a, b, c \in \mathcal{H}$ and an arbitrary positive number $\varphi$. 

Substituting $\varphi := \hat{\varphi}_k$, $a:= x^k - T(x^k)$, $b:= x^k - x^0$, and $c:=T(x^k) - x^{\star}$, and the facts that $a + c = x^k-x^{\star}$, $a + c - b=x^0-x^{\star}$ into the last equality, we obtain
\begin{equation}\label{eq:th41_proof1}
\arraycolsep=0.2em
\begin{array}{lcl}
\Tc_{[1]} & := & \hat{\varphi}_k \norms{ x^k - T(x^k) }^2 + 2\iprods{x^k - Tx^k, x^k - x^0 } + \norms{T(x^k) - x^{\star}}^2 - \norms{x^k - x^{\star} }^2 \vspace{1ex} \\
& = & \frac{\hat{\varphi}_k+1}{2}\norms{ x^k - T(x^k) }^2 - \frac{2}{\hat{\varphi}_k+1}\norms{x^0 - x^{\star} }^2 + \frac{2}{ \hat{\varphi}_k+1}\norms{x^0 - x^{\star} - \frac{\hat{\varphi}_k+1}{2}(x^k - T(x^k) )}^2.
\end{array}
\end{equation}
Combining \eqref{eq:th41_proof1} and  Lemma \ref{le:key_properties} (iii), we get
\begin{equation*}
\frac{\hat{\varphi}_k+1}{2}\norms{x^k - T(x^k)}^2 - \frac{2}{\hat{\varphi}_k+1}\norms{ x^0 - x^{\star} }^2 \leq 0.
\end{equation*}
which implies that
\begin{equation*}
\norms{ x^k - T(x^k)}  \leq \frac{2}{\hat{\varphi}_k+1}\norms{ x^0-x^{\star}}.
\end{equation*}
Substituting $\hat{\varphi}_k + 1 \geq C_k := \big( \frac{1 + \hat{\varphi}_0}{2} \big)^k\big(\hat{\varphi}_0 + \frac{2\hat{\varphi}_0}{\hat{\varphi}_0-1}\big) - \frac{\hat{\varphi}_0 + 1}{\hat{\varphi}_0 - 1}$ into the last expression, we obtain the first bound in \eqref{eq:convergence_rate}, where $\frac{1}{C_k} \to 0$ geometrically as $k \to \infty$.

Finally, the proof of the second bound in \eqref{eq:convergence_rate} is similar to the proof in Theorem~\ref{th:convergence_theorem1}, and we omit it.
Here, since $\hat{\varphi}_0 > 1$, we notice that $\frac{1}{C_k}$ converges linearly to zero as $k \to \infty$.
\end{proof}

\section{Applications to Co-Coercive Equations}\label{sec:NonExMapping}
In this section, we apply the results developed in Sections~\ref{sec:linear_convergence_rate} and \ref{sec:adaptive_Halpern_method} to solve the co-coercive equation \eqref{eq:CE}. The main idea is summarized as follows.
\begin{compactitem}
\item First, we apply Tikhonov regularization to the operator $G$ in \eqref{eq:CE} by defining $G_{\mu} := G + \mu \Id$ for a sufficiently small parameter $\mu > 0$, where $\Id$ denotes the identity operator.

\item Next, we appropriately choose a parameter $\eta > 0$ and define $T_{\mu} := \Id - \eta G_{\mu}$ so that $T_{\mu}$ becomes a contractive mapping.

\item We then apply either Algorithm~\ref{alg:adaptive_HP0} from Section~\ref{sec:linear_convergence_rate} or Algorithm~\ref{alg:adaptive_HP} from Section~\ref{sec:adaptive_Halpern_method} to approximate the fixed point $x^{\star}_{\mu}$ of $T_{\mu}$.

\item Finally, we characterize an approximate solution of \eqref{eq:CE} by choosing an appropriate value of the regularization parameter $\mu$.
\end{compactitem}
Notice that the fixed-point problem \eqref{eq:FP} with a nonexpansive mapping $T$ is a special case of \eqref{eq:CE}. 
Indeed, by defining $G := \Id - T$, solving \eqref{eq:FP} is equivalent to solving \eqref{eq:CE}, where $G$ is $\frac{1}{2}$-co-coercive. 
Therefore, it suffices to consider the more general problem \eqref{eq:CE} throughout this section.

\subsection{Tikhonov regularization for \eqref{eq:CE}}\label{subsec:Tikhonov_reg}
Let $G : \Hil \to \Hil$ be a single-valued mapping and $\beta$-co-coercive, i.e., $\iprods{G(x) - G(y), x - y} \geq \beta\norms{G(x) - G(y)}^2$ for all $x, y \in \Hil$, and $\mu > 0$ be a given positive number.
We consider two families of regularized mappings $G_{\mu}$ of $G$ and $T_{\mu}$ respectively defined as follows:
\begin{equation}\label{eq:T_mu}
G_{\mu} := G + \mu \Id 
\quad \text{and} \quad T_{\mu} := \Id - \eta G_{\mu} = (1-\eta\mu)\Id - \eta G,
\end{equation}
for an appropriate $\eta > 0$ determined later.
First, we prove the following lemma.

\begin{lemma}\label{le:regularized_mapping}
Let $G : \Hil \to \Hil$ be a $\beta$-co-coercive mapping and $\zer{G} \neq \emptyset$.
Let $G_{\mu}$ and $T_{\mu}$ be defined by \eqref{eq:T_mu} for some $\mu > 0$ and $\eta > 0$.
Then, the following properties hold:
\begin{compactitem}
\item[$\mathrm{(i)}$] $x^{\star}_{\mu} \in \zer{G_{\mu}}$ iff $x^{\star}_{\mu} \in \mathrm{Fix}(T_{\mu})$ for any $\eta > 0$ and $\mu > 0$.
\item[$\mathrm{(ii)}$] For $x^{\star} \in \zer{G}$ and $x^{\star}_{\mu} \in \zer{G_{\mu}}$, we have 
\begin{equation}\label{eq:G_mu_pro1}
\mu^2\norms{x^{\star}_{\mu} - x^{\star}}^2 + (1 + 2\mu\beta)\norms{G(x^{\star}_{\mu}) - G(x^{\star})}^2 \leq \mu^2\norms{x^{\star}}^2.
\end{equation}
Consequently, we have $\norms{G(x_{\mu}^{\star})} \leq \frac{\mu}{\sqrt{1+2\mu\beta}}\norms{x^{\star}}$ and $\norms{x^{\star}_{\mu} - x^{\star}} \leq \norms{x^{\star}}$.
\item[$\mathrm{(iii)}$] If we choose $\eta$ such that $0 < \eta \leq \frac{2\beta}{1 + 2\mu\beta}$, then $T_{\mu}$ is $\rho_{\mu}$-contractive with $\rho_{\mu} := 1 - \mu\eta \in (0, 1)$.
\end{compactitem}
\end{lemma}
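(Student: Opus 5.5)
The three items will be proved separately and directly from the definitions \eqref{eq:T_mu} and the $\beta$-co-coercivity of $G$. No earlier result of the paper is needed beyond these definitions.

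For (i), I will use that $T_{\mu}(x) = x$ is equivalent to $\eta G_{\mu}(x) = 0$. Since $\eta > 0$, this holds if and only if $G_{\mu}(x) = 0$, for any $\eta>0$ and $\mu>0$.

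For (ii), the plan is to exploit two facts: $G_{\mu}(x^{\star}_{\mu}) = 0$ gives $G(x^{\star}_{\mu}) = -\mu x^{\star}_{\mu}$, and $G(x^{\star}) = 0$. Set $a := x^{\star}_{\mu} - x^{\star}$, so that $G(x^{\star}_{\mu}) - G(x^{\star}) = -\mu(a + x^{\star})$. I will apply co-coercivity at the pair $(x^{\star}_{\mu}, x^{\star})$ and multiply by $2\mu$:
\begin{equation*}
-2\mu^2\norms{a}^2 - 2\mu^2\iprods{a, x^{\star}} \geq 2\mu\beta\norms{G(x^{\star}_{\mu}) - G(x^{\star})}^2 .
\end{equation*}
Separately, I will expand
\begin{equation*}
\norms{G(x^{\star}_{\mu}) - G(x^{\star})}^2 = \mu^2\norms{a}^2 + 2\mu^2\iprods{a,x^{\star}} + \mu^2\norms{x^{\star}}^2 .
\end{equation*}
Adding the two relations, the cross terms $\iprods{a, x^{\star}}$ cancel, and this yields exactly \eqref{eq:G_mu_pro1}. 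The two consequences then follow by dropping either nonnegative term on the left-hand side, using $G(x^{\star}) = 0$ for the first bound.

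For (iii), I will write $d := x - y$ and $g := G(x) - G(y)$, and expand
\begin{equation*}
\norms{T_{\mu}(x) - T_{\mu}(y)}^2 = (1-\eta\mu)^2\norms{d}^2 - 2\eta(1-\eta\mu)\iprods{g,d} + \eta^2\norms{g}^2 .
\end{equation*}
The one point requiring care is the sign of the factor $1 - \eta\mu$, since co-coercivity can only be applied if this factor is nonnegative. From $\eta \leq \frac{2\beta}{1+2\mu\beta}$ we get
\begin{equation*}
\eta\mu \leq \frac{2\mu\beta}{1+2\mu\beta} < 1 ,
\end{equation*}
so $\rho_{\mu} = 1 - \eta\mu \in (0,1)$. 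Then $\iprods{g,d} \geq \beta\norms{g}^2$ gives the bound
\begin{equation*}
(1-\eta\mu)^2\norms{d}^2 + \eta\big[\eta - 2\beta(1-\eta\mu)\big]\norms{g}^2 .
\end{equation*}
The bracket is nonpositive exactly when $\eta(1 + 2\beta\mu) \leq 2\beta$, which is the assumed condition. Therefore $\norms{T_{\mu}(x) - T_{\mu}(y)} \leq (1-\eta\mu)\norms{x-y}$.

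I expect no real obstacle. The only delicate points are arranging the algebra in (ii) so that the cross terms $\iprods{a, x^{\star}}$ cancel, and checking $1-\eta\mu>0$ in (iii) before invoking co-coercivity.
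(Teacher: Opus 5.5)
Your proposal is correct and follows essentially the same route as the paper: (i) is the same one-line equivalence, (ii) combines co-coercivity at the pair $(x^{\star}_{\mu}, x^{\star})$ with an expansion of $G(x^{\star}_{\mu}) - G(x^{\star}) = -\mu x^{\star}_{\mu}$, and (iii) is the identical expansion, with $1-\eta\mu>0$ checked before invoking co-coercivity. In (ii) the paper instead squares $-\mu x^{\star} = [G(x^{\star}_{\mu}) - G(x^{\star})] + \mu(x^{\star}_{\mu} - x^{\star})$, which is the same computation with the terms arranged differently.
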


\begin{proof}
\noindent{$\mathrm{(i)}$}
We note that $x^{\star}_{\mu}\in \zer{G_{\mu}}$ means that $G_{\mu}(x^{\star}_{\mu}) = 0$, which is equivalent to $x^{\star}_{\mu} = x^{\star}_{\mu} - \eta G_{\mu}(x^{\star}_{\mu}) = T_{\mu}(x^{\star}_{\mu})$ for any $\mu > 0$ and $\eta > 0$. This means that $x^{\star}_{\mu}$ is a fixed-point of $T_{\mu}$.

\noindent{$\mathrm{(ii)}$}
Let $x^{\star} \in \zer{G}$ and $x^{\star}_{\mu} \in \zer{G_{\mu}}$.
Then, we have $-\mu x^{\star} = G(x^{\star}_{\mu} ) - G( x^{\star} ) + \mu(x^{\star}_{\mu} - x^{\star})$.
Squaring both sides, we obtain 
\begin{equation*}
\mu^2\norms{x^{\star}}^2 = \norms{G(x^{\star}_{\mu} ) - G( x^{\star} )}^2 + 2\mu\iprods{G(x^{\star}_{\mu} ) - G( x^{\star} ), x^{\star}_{\mu} - x^{\star}} + \mu^2\norms{x^{\star}_{\mu} - x^{\star}}^2.
\end{equation*}
By the $\beta$-co-coercivity of $G$, this relation leads to \eqref{eq:G_mu_pro1}.
The two remaining bounds are direct consequences of \eqref{eq:G_mu_pro1} and $G(x^{\star}) = 0$.

\noindent{$\mathrm{(iii)}$}~Since $0 < \eta \leq \frac{2\beta}{1 + 2\beta\mu}$, we have $\eta - 2\beta(1 - \eta\mu) \leq 0$.
In addition, since $\frac{2\beta}{1 + 2\beta\mu} < \frac{1}{\mu}$, we get $1 - \eta\mu > 0$.
Now, using these facts, $G_{\mu}$ and $T_{\mu}$ defined by \eqref{eq:T_mu}, and the $\beta$-co-coercivity of $G$, we can show that
\begin{equation*}
\arraycolsep=0.2em
\begin{array}{lcl}
    \norms{ T_{\mu} (x) - T_{\mu} (y) }^2 &= &  \norms{ (1-\eta \mu) (x-y) - \eta (G(x) - G(y)) }^2 \vspace{1ex}\\
    &= & (1-\eta \mu )^2 \norms{ x - y }^2 + \eta^2 \norms{ G(x) - G(y) }^2 - 2\eta (1-\eta \mu) \iprods{ G(x) - G(y), x - y } \vspace{1ex} \\    
    &\leq & (1 - \eta \mu )^2 \norms{ x - y }^2 + \eta^2 \norms{ G(x) - G(y) }^2 - 2\eta (1-\eta \mu) \beta \norms{ G(x) - G(y) }^2 \vspace{1ex} \\
    & = & (1 - \eta \mu)^2 \norms{ x - y }^2 + \eta [ \eta - 2\beta (1-\eta \mu ) ] \norms{ G(x) - G(y) }^2 \vspace{1ex}\\
    & \leq & (1 - \eta\mu)^2\norms{x - y}^2.
\end{array}
\end{equation*}
Since $1 - \eta\mu > 0$, the last expression leads to $ \norms{ T_{\mu} (x) - T_{\mu} (y) } \leq (1 - \eta\mu)\norms{x - y}$, showing that $T_{\mu}$ is $\rho_{\mu}$-contractive with $\rho_{\mu} := 1 - \eta\mu$.
\end{proof}

Lemma~\ref{le:regularized_mapping} provides useful insights for developing practical algorithms.
First, the bound $\norms{G(x_{\mu}^{\star})} \leq \frac{\mu}{\sqrt{1+2\mu\beta}}\norms{x^{\star}}$ in Lemma~\ref{le:regularized_mapping}(ii) shows that $\norms{G(x^{\star}_{\mu})}$ converges to zero with the same rate as $\mu$ when $\mu \to 0^{+}$.
Moreover, since $G$ is continuous and $\sets{x^{\star}_{\mu}}$ is bounded due to the estimate $\norms{x^{\star}_{\mu} - x^{\star}} \leq \norms{x^{\star}}$, we conclude that $x^{\star} = \lim_{\mu\to 0^{+}}x^{\star}_{\mu}$ (or a cluster of $\sets{x^k}$) is a solution of \eqref{eq:CE}.
Second, if the co-coercivity parameter $\beta$ is known, then we may choose $\eta := \frac{2\beta}{1 + 2\beta\mu}$.
With this choice, the mapping $T_{\mu}$ depends only on the regularization parameter $\mu$.
In practice, however, $\beta$ is often unavailable. In this case, we may estimate it by $\beta_0 : = \frac{\iprods{G(x^0 + v) - G(x^0), v}}{\norms{G(x^0 + v) - G(x^0)}^2}$ for some nonzero direction $v \in\Hil$.

\subsection{Parameter-free Halpern methods for co-coercive equations}\label{subsec:methods_for_CE}
For simplicity, we present the application of Algorithm~\ref{alg:adaptive_HP0} to approximate a fixed point of $T_{\mu}$. 
The extension to Algorithm~\ref{alg:adaptive_HP} is very analogous. 
Once an approximate fixed point of $T_{\mu}$ is obtained, we recover an approximate solution of \eqref{eq:CE} by appropriately choosing the regularization parameter $\mu$.

Since the co-coercivity parameter $\beta$ of $G$ is generally unknown, we estimate it by the upper bound
\begin{equation}\label{eq:beta_0_estimate}
0 < \beta \leq \beta_0 := \frac{\iprods{G(x^0 + v) - G(x^0), v}}{\norms{G(x^0 + v) - G(x^0)}^2},
\end{equation}
for a given direction $0 \neq v\in\Hil$ such that $G(x^0 + v) \neq G(x^0)$.

First, by Lemma~\ref{le:regularized_mapping}(ii), if we choose $\mu := \epsilon > 0$ for a sufficiently small tolerance $\epsilon > 0$, then $x^{\star}_{\mu} = x^{\star}_{\epsilon}$ and $\norms{G(x^{\star}_{\epsilon})} \leq \frac{\norms{x^{\star}}\epsilon}{\sqrt{1 + 2\beta\epsilon}}$, showing that $x^{\star}_{\epsilon}$ is an $\hat{\epsilon}$-approximate solution of \eqref{eq:CE}, where $\hat{\epsilon} := \frac{\epsilon}{\sqrt{1 + 2\beta\epsilon}}$.

Next, after computing $\beta_0$, Lemma~\ref{le:regularized_mapping}(iii) suggests choosing $\eta := \frac{\kappa \beta_0}{1 + 2\beta_0\epsilon}$ for some $\kappa \in (0, 1)$ (e.g., $\kappa := 0.9$).
The purpose of this choice is to ensure that $0 < \eta \leq \frac{2\beta}{1 + 2\mu\beta}$.

With these choices of $\mu$ and $\eta$, we specialize Algorithm~\ref{alg:adaptive_HP0} to approximate a fixed point of $T_{\mu}$, or equivalently, an approximate solution of \eqref{eq:CE}, resulting in Algorithm~\ref{alg:AHP_mu}.

\begin{algorithm}[!htbp]
\caption{(Parameter-Free HP Algorithm for Co-coercive Equation \eqref{eq:CE})}\label{alg:AHP_mu}
\begin{algorithmic}[1]
\State\textbf{Initialization:} Choose an initial point $x^0\in\mathcal H$ and an accuracy $\epsilon \in (0, 1)$.
\State Choose $\mu := \epsilon$ and $\eta := \frac{\kappa\beta_0}{1 + 2\beta_0\epsilon}$ for $\beta_0$ in \eqref{eq:beta_0_estimate} and $\kappa \in (0, 1)$ (e.g., $\kappa := 0.9$).
\State Estimate $\omega_0$ as in \eqref{eq:omega_0_choice} and set $\varphi_{-1} := 0$.
\State Evaluate $T_{\mu}(x^0) := (1 - \eta\mu)x^0 - \eta G(x^0)$.
\State \textbf{For $k = 0, 1, 2, \cdots$, perform}
\State\hspace{3ex}If ~$\norms{x^k - T_{\mu}(x^k)} = 0$,  then TERMINATE.
\State\hspace{3ex}Update $\bar{\varphi}_{k} := \bar{\varphi}_{k-1} + \omega_k^{2k}$.
\State\hspace{3ex}\label{alg3:step5}Update
\begin{equation*}
x^{k+1} :=  \frac{1}{ \bar{\varphi}_k + 1} x^0 + \frac{ \bar{\varphi}_k}{\bar{\varphi}_k + 1}T_{\mu}(x^k).
\end{equation*}
\State\hspace{3ex}\label{alg3:step6}Evaluate $T_{\mu}(x^{k+1}) := (1-\eta\mu)x^{k+1} - \eta G(x^{k+1})$.
\State\hspace{3ex}\label{alg3:step7}Update $\omega_{k+1}$ as in \eqref{eq:update_omega_k} or \eqref{eq:update_omega_k_modified} with $T_{\mu}$.
\State\textbf{End For}
\end{algorithmic}
\end{algorithm}

Finally, we prove the following corollary, which states the convergence of Algorithm~\ref{alg:AHP_mu}.

\begin{corollary}\label{co:convergence_of_AHP}
Let $G : \Hil \to \Hil$ in \eqref{eq:CE} be $\beta$-co-coercive and $\mathrm{zer}(G)\neq\emptyset$.
Suppose that $\mu := \epsilon > 0$, $0 < \eta \leq \frac{2\beta}{1 + 2\beta\epsilon}$, and $x^{\star} \in \mathrm{zer}(G)\neq\emptyset$.
Let $\sets{x^k_{\mu}}_{k\geq 0}$ be generated by Algorithm~\ref{alg:AHP_mu}.
Then, we have 
\begin{equation}\label{eq:AHP_mu_bound} 
\arraycolsep=0.2em
\begin{array}{lcl}
\norms{ G(x^k_{\mu}) }  & \leq & \frac{6(1 - \eta\epsilon)^{k-1}}{\eta^2\epsilon^2}\big(\norms{G(x^0)} + \epsilon\norms{x^0}\big)  +  2\epsilon\norms{x^{\star}}.
\end{array}
\end{equation}
Consequently, the maximum number of iterations $k$ to achieve  $\norms{ G(x^k_{\mu}) } \leq M_0\epsilon$ is $k := \BigOs{\epsilon^{-1}\ln(\epsilon^{-1})}$, where $M_0 := \frac{6}{\eta^2}\big(\norms{G(x^0)} + \norms{x^0}\big) + 2\norms{x^{\star}}$.

\end{corollary}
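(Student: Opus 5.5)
The plan is to split $\norms{G(x^k_{\mu})}$ into an optimization error for the regularized fixed-point problem and a regularization bias:
\[
\norms{G(x^k_\mu)} \leq \norms{G_\mu(x^k_\mu)} + \mu\norms{x^k_\mu - x^\star_\mu} + \mu\norms{x^\star_\mu}.
\]
This follows from $G = G_\mu - \mu\Id$. We then bound each of the three terms using Lemma~\ref{le:regularized_mapping} and Theorem~\ref{th:convergence_theorem1}, applied to the contraction $T_\mu$.

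\textbf{Step 1 (contraction).} By Lemma~\ref{le:regularized_mapping}(iii), the assumption $0<\eta\le \frac{2\beta}{1+2\beta\epsilon}$ makes $T_\mu$ $\rho_\mu$-contractive with $\rho_\mu = 1-\eta\epsilon\in(0,1)$. Its unique fixed point is $x^\star_\mu\in\zer{G_\mu}$, by Lemma~\ref{le:regularized_mapping}(i). Algorithm~\ref{alg:AHP_mu} is exactly Algorithm~\ref{alg:adaptive_HP0} applied to $T_\mu$, so Theorem~\ref{th:convergence_theorem1} gives
\[
\norms{x^k_\mu - T_\mu(x^k_\mu)} \le C_{\rho_\mu}\rho_\mu^k\norms{x^0 - T_\mu(x^0)}.
\]
Since $x - T_\mu(x) = \eta G_\mu(x)$, this reads $\norms{G_\mu(x^k_\mu)}\le C_{\rho_\mu}\rho_\mu^k\norms{G_\mu(x^0)}$. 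We also have $\norms{G_\mu(x^0)}\le \norms{G(x^0)}+\epsilon\norms{x^0}$.

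\textbf{Step 2 (constants).} Since $1-\rho_\mu=\eta\epsilon$ and $1+\rho_\mu\le 2$,
\[
C_{\rho_\mu}\rho_\mu^k = \rho_\mu^k + \frac{(1+\rho_\mu)\rho_\mu^{k-1}}{(\eta\epsilon)^2} \le \rho_\mu^{k-1}\Big(1+\frac{2}{\eta^2\epsilon^2}\Big) \le \frac{3\rho_\mu^{k-1}}{\eta^2\epsilon^2}.
\]
The last inequality uses $\eta\epsilon<1$, which holds because $\eta\le 2\beta/(1+2\beta\epsilon)<1/\epsilon$.

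\textbf{Step 3 (middle term).} Contractivity of $T_\mu$ gives $\norms{x^k_\mu-x^\star_\mu}\le \frac{1}{1-\rho_\mu}\norms{x^k_\mu - T_\mu(x^k_\mu)} = \frac{1}{\epsilon}\norms{G_\mu(x^k_\mu)}$. Hence $\mu\norms{x^k_\mu-x^\star_\mu}\le\norms{G_\mu(x^k_\mu)}$, so the first two terms together are at most $2\norms{G_\mu(x^k_\mu)}\le \frac{6\rho_\mu^{k-1}}{\eta^2\epsilon^2}\big(\norms{G(x^0)}+\epsilon\norms{x^0}\big)$.

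\textbf{Step 4 (bias).} Lemma~\ref{le:regularized_mapping}(ii) gives $\norms{x^\star_\mu}\le\norms{x^\star_\mu-x^\star}+\norms{x^\star}\le 2\norms{x^\star}$, so the bias term is at most $2\epsilon\norms{x^\star}$. Adding the pieces yields \eqref{eq:AHP_mu_bound}. (One could instead bound $\norms{G(x^\star_\mu)}$ directly, but this route matches the stated constant.)

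\textbf{Step 5 (complexity).} Using $\epsilon<1$, the first term is at most $\frac{6(1-\eta\epsilon)^{k-1}}{\eta^2\epsilon^2}\big(\norms{G(x^0)}+\norms{x^0}\big)$. It suffices that $(1-\eta\epsilon)^{k-1}\le\epsilon^3$, because then the first term is at most $\frac{6\epsilon}{\eta^2}(\norms{G(x^0)}+\norms{x^0})$ and the total is at most $M_0\epsilon$. Since $\ln(1-\eta\epsilon)\le-\eta\epsilon$, the choice $k-1\ge \frac{3\ln(1/\epsilon)}{\eta\epsilon}$ works, i.e.\ $k=\BigOs{\epsilon^{-1}\ln(\epsilon^{-1})}$.

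The main obstacle is the bookkeeping in Step 2 and the middle term: the $1/(1-\rho)^2$ factor in $C_\rho$, which is $(\eta\epsilon)^{-2}$, must be tracked carefully to land exactly on the constant $6/(\eta^2\epsilon^2)$. A further point to watch is that, if $\eta$ is regarded as $\epsilon$-independent (e.g.\ $\eta\approx\kappa\beta_0$), then $M_0$ remains bounded as $\epsilon\to0$.
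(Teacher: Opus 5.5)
Your proposal is correct and follows essentially the same route as the paper: apply Theorem~\ref{th:convergence_theorem1} to $T_\mu$ with $\rho_\mu = 1-\eta\epsilon$, bound $C_{\rho_\mu}\rho_\mu^k$ by $3\rho_\mu^{k-1}/(\eta\epsilon)^2$, and convert the residual to a distance via $\norms{x^k_\mu - x^\star_\mu}\le \epsilon^{-1}\norms{G_\mu(x^k_\mu)}$. Like the paper, you then control the bias with $\norms{x^\star_\mu - x^\star}\le\norms{x^\star}$ from Lemma~\ref{le:regularized_mapping}(ii) and pick $k$ so that $(1-\eta\epsilon)^{k-1}\le\epsilon^3$; your observation $\mu\norms{x^k_\mu-x^\star_\mu}\le\norms{G_\mu(x^k_\mu)}$ gives the factor $6$ more directly than the paper's intermediate $(1+\eta)$ factor, and $k-1\ge 3\ln(1/\epsilon)/(\eta\epsilon)$ is simply an explicit version of the paper's $k\ge 1+3\ln(\epsilon)/\ln(1-\eta\epsilon)$.
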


\begin{proof}
Since $x^k_{\mu}$ is generated by Algorithm~\ref{alg:AHP_mu}, by \eqref{eq:convergence_bound1} and $\eta G_{\mu} = \Id - T_{\mu}$, we have
\begin{equation*} 
\eta\norms{ G_{\mu}(x^k_{\mu}) } = \norms{x^k_{\mu} - T_{\mu}(x_{\mu}^k)} \leq  \Big[ 1 + \frac{1+\rho_{\mu}}{(1-\rho_{\mu})^2\rho_{\mu}}\Big] \rho_{\mu}^k \norms{x^0 - T_{\mu}(x^0)} \leq \frac{3\eta \rho_{\mu}^{k-1} }{(1-\rho_{\mu})^2}\norms{G_{\mu}(x^0)},
\end{equation*}
where we have used the fact that $1 + \frac{1+\rho_{\mu}}{(1-\rho_{\mu})^2\rho_{\mu}} \leq \frac{3}{(1-\rho_{\mu})^2\rho_{\mu}}$.

Since $\rho_{\mu} := 1 - \eta\mu = 1 - \eta\epsilon$, this expression leads to 
\begin{equation*} 
\norms{ G_{\mu}(x^k_{\mu}) } \leq \frac{3(1 - \eta\epsilon)^{k-1}}{\eta^2\epsilon^2}\norms{G_{\mu}(x^0)}.
\end{equation*}
Moreover, following the same proof for the second bound of \eqref{eq:convergence_bound1}, we have  $\norms{x^k_{\mu} - x^{\star}_{\mu}} \leq \frac{\eta}{1 - \rho_{\mu}}\norms{G_{\mu}(x^k)}$.
Substituting $\rho_{\mu} = 1 - \eta\mu = 1 - \eta\epsilon$, this inequality leads to
\begin{equation*} 
\norms{ x^k_{\mu} -  x^{\star}_{\mu} } \leq \frac{3(1 - \eta\epsilon)^{k-1}}{\eta^2\epsilon^3}\norms{G_{\mu}(x^0)}.
\end{equation*}
This inequality together with $\norms{x_{\mu}^{\star} - x^{\star}}  \leq \norms{x^{\star}}$ from Lemma~\ref{le:regularized_mapping}(ii) show that
\begin{equation*} 
\arraycolsep=0.2em
\begin{array}{lcl}
\norms{x^k_{\mu}} & \leq & \norms{x^k_{\mu} - x_{\mu}^{\star}} + \norms{x_{\mu}^{\star} - x^{\star}} + \norms{x^{\star}}  \leq \frac{3(1 - \eta\epsilon)^{k-1}}{\eta^2\epsilon^3}\norms{G_{\mu}(x^0)} + 2\norms{x^{\star}}.
\end{array}
\end{equation*}
Combining the last two inequalities, we can show that
\begin{equation*} 
\arraycolsep=0.2em
\begin{array}{lcl}
\norms{ G(x^k_{\mu}) } & \leq & \frac{3(1 - \eta\epsilon)^{k-1}}{\eta^2\epsilon^2}\norms{G_{\mu}(x^0)} + \epsilon\norms{x_{\mu}^k} \leq \frac{3(1+\eta)(1 - \eta\epsilon)^{k-1}}{\eta^2\epsilon^2}\norms{G_{\mu}(x^0)}  + 2 \epsilon \norms{x^{\star}} \vspace{1ex}\\
& \leq & \frac{6(1 - \eta\epsilon)^{k-1}}{\eta^2\epsilon^2 }\big(\norms{G(x^0)} + \epsilon\norms{x^0}\big)  +  2\epsilon\norms{x^{\star}}.
\end{array}
\end{equation*}
For a sufficiently small $\epsilon \in (0, 1)$, we have $\eta := \frac{\kappa \beta_0}{1 + 2\beta_0\epsilon} > \frac{\kappa\beta_0}{2}$.
This means that $\eta = \BigOs{1}$.

Since we have chosen $\mu = \epsilon > 0$, by choosing $k \geq 1 + \frac{3\ln(\epsilon)}{\ln(1 - \eta\epsilon)} = \BigOs{\epsilon^{-1}\ln(\epsilon^{-1})}$, we can show that $\frac{(1 - \eta\epsilon)^{k-1}}{\epsilon^2} \leq \epsilon$.
Therefore, the last expression leads to
\begin{equation*} 
\arraycolsep=0.2em
\begin{array}{lcl}
\norms{ G(x^k_{\mu}) } & \leq & \Big[\frac{6}{\eta^2}\big(\norms{G(x^0)} + \norms{x^0}\big) + 2\norms{x^{\star}}\Big] \epsilon = M_0\epsilon,
\end{array}
\end{equation*}
where $M_0 := \frac{6}{\eta^2}\big(\norms{G(x^0)} + \norms{x^0}\big) + 2\norms{x^{\star}}$.
This proves \eqref{eq:AHP_mu_bound}.
Consequently, we conclude that the worst-case iteration complexity of Algorithm~\ref{alg:AHP_mu} is $\BigOs{\epsilon^{-1}\ln(\epsilon^{-1})}$.
\end{proof}

Note that Algorithm~\ref{alg:AHP_mu} has an iteration complexity of $\BigOs{\epsilon^{-1}\ln(\epsilon^{-1})}$.
This bound is only slightly worse than the $\BigOs{\epsilon^{-1}}$ complexity achieved by the standard Halpern fixed-point iteration with the theoretical parameter choice $\varphi_k = k$, as stated in Theorem~\ref{th:linear_convergence_of_HP}, by a logarithmic factor of $\epsilon^{-1}$.

\begin{remark}\label{re:generalization}
Several important classes of problems can be reformulated as \eqref{eq:CE}, or equivalently as the fixed-point problem \eqref{eq:FP}, including the optimality conditions of convex optimization problems, convex-concave saddle-point problems, and monotone variational inequality problems (VIPs). 
Depending on the underlying problem structure, different reformulations of \eqref{eq:CE} can be employed, such as those based on proximal (or resolvent) operators, forward-backward splitting, backward-forward splitting, Douglas-Rachford splitting, and three-operator splitting. 
Under appropriate assumptions, the resulting reformulation mapping $G$ is co-coercive. Consequently, the algorithms developed in this paper can be directly applied to solve these problems. 
Since these reformulations are well established in the literature, we omit the details and refer the interested reader to the corresponding references.
\end{remark}

\section{Parameter-Free Nesterov's Accelerated Fixed-Point Methods}\label{sec:NesMethods}
As we mentioned earlier, since $T$ is $\rho$-contractive, both the Banach-Picard (BP) and Krasnosel'ski\v{\i}-Mann (KM) fixed-point iterations converge linearly to the unique fixed point of $T$. 
However, in practice, it is often difficult to determine whether the underlying mapping is $\rho$-contractive for some $\rho \in (0,1)$ or merely nonexpansive. 
Consequently, it is generally safer to apply the KM fixed-point iteration, which remains convergent under the weaker assumption of nonexpansiveness.

On the other hand, the KM fixed-point iteration achieves only a $\BigOs{1/\sqrt{k}}$ convergence rate for the fixed-point residual $\norms{x^k - T(x^k)}$ in the nonexpansive setting, which is significantly slower than the optimal $\BigOs{1/k}$ rate attained by Halpern's fixed-point iteration. 
Recently, \cite{attouch2020convergence}, and subsequently \cite{bot2022bfast}, proposed Nesterov-type accelerated variants of the KM fixed-point iteration. 
Interestingly, \cite{tran2022connection} showed that these accelerated KM schemes are mathematically equivalent to Halpern's fixed-point iteration \eqref{eq:HP_iteration}, differing only in the choice of the parameter sequence $\lambda_k$. 
Motivated by this observation, we exploit this equivalent representation to develop parameter-free Nesterov-accelerated KM fixed-point methods for approximating a fixed point of $T$.

\subsection{The derivation of the algorithm}\label{subsec:Nes_derivation}
To derive Nesterov's accelerated fixed-point schemes for solving \eqref{eq:FP}, we exploit the ideas from \cite{tran2022connection} and process as follows.
First, from \eqref{eq:HP_iteration2}, we can write it at the $(k-1)^{th}$ and the $k^{th}$ iterations:
\begin{equation*}
\arraycolsep=0.2em
\begin{array}{lcl}
(\varphi_{k-1} + 1)x^k & = &  x^0 + \varphi_{k-1}T(x^{k-1}), \vspace{1ex}\\
(\varphi_k + 1)x^{k+1} & = &  x^0 + \varphi_kT(x^k).
\end{array}
\end{equation*}
Next, eliminating $x^0$ in both lines, we can show that $(\varphi_k + 1)x^{k+1} - (\varphi_{k-1} + 1)x^k = \varphi_kT(x^k) - \varphi_{k-1}T(x^{k-1})$.
This equation leads to the following update rule:
\begin{equation}\label{eq:NesFP_scheme0}
\arraycolsep=0.2em
\begin{array}{lcl}
x^{k+1} &= & \frac{\varphi_{k-1} + 1}{\varphi_k + 1}x^k +  \frac{\varphi_k}{\varphi_k + 1}T(x^k) - \frac{\varphi_{k-1}}{\varphi_k + 1}T(x^{k-1}).
\end{array}
\end{equation}
This scheme requires two consecutive iterates $x^{k-1}$ and $x^k$ to update $x^{k+1}$.
Therefore, at the first iteration $k=0$, we update $x^1 := \frac{1}{\varphi_0 + 1}x^0 +  \frac{\varphi_0}{\varphi_0 + 1}T(x^0)$.

Note that we can rewrite \eqref{eq:NesFP_scheme0} as follows:
\begin{equation*} 
\arraycolsep=0.2em
\begin{array}{lcl}
x^{k+1} & := & (1 - \lambda_k)x^k +  \lambda_k T(x^k) + \gamma_k(T(x^k) - T(x^{k-1})).
\end{array}
\end{equation*}
where $\lambda_k := \frac{\varphi_k - \varphi_{k-1}}{\varphi_k + 1}$ and $\gamma_k := \frac{\varphi_{k-1}}{\varphi_k + 1}$.
If $\varphi_k > \varphi_{k-1} \geq 0$, then we have $\lambda_k \in (0, 1]$.
Clearly, if $\gamma_k = 0$, then \eqref{eq:NesFP_scheme0} reduces to the standard KM fixed-point iteration \cite{Bauschke2011}.
Therefore, we can view \eqref{eq:NesFP_scheme0} as a KM fixed-point variant with a \textit{correction term} $\gamma_k(T(x^k) - T(x^{k-1}))$.

Now, we exploit the technique in Algorithm~\ref{alg:adaptive_HP} to construct an update rule for $\varphi_k$ for all $k \geq 0$.
We note that $s^k := x^0 - T(x^k) = (\varphi_{k-1} + 1)x^k - \varphi_{k-1}T(x^{k-1}) - T(x^k) = \varphi_{k-1}(x^k - T(x^{k-1})) + r^k$.
If we denote by $q^k := x^k - T(x^{k-1})$, then we have $s^k = \varphi_{k-1}q^k + r^k$.
The condition $\rho_{k-1}\norms{\hat{\lambda}_ks^k - r^k} = \hat{\lambda}_k\norms{s^k}$ with $\hat{\lambda}_k = \frac{1}{\varphi_k + 1}$ from Lemma~\ref{le:implementable_step} becomes
\begin{equation*} 
\rho_{k-1}^2\norms{\varphi_{k-1}q^k - \varphi_kr^k}^2 = \norms{\varphi_{k-1}q^k + r^k}^2.
\end{equation*}
Solving this quadratic equation, we obtain 
\begin{equation}\label{eq:varphi_k_update}
\varphi_k :=   \frac{\rho_{k-1}\varphi_{k-1}\iprods{q^k, r^k} + \sqrt{\Delta_k}}{\rho_{k-1}\norms{r^k}^2},
\end{equation}
where $\Delta_k := \norms{r^k}^2\norms{\varphi_{k-1}q^k + r^k}^2 - \rho_{k-1}^2\varphi_{k-1}^2\big(\norms{q^k}^2\norms{r^k}^2  - \iprods{q^k, r^k}^2 \big) > 0$.

Finally, similar to Algorithm~\ref{alg:adaptive_HP}, we can update $\rho_k$ as $\rho_k := \max\sets{\rho_{k-1}, \frac{\norms{T(x^{k+1}) - T(x^k)}}{\norms{\Delta{x}^k}}}$, where $\Delta{x}^k := x^{k+1} - x^k$.
Alternatively, we can also compute $\rho_{k} := \max\set{\rho_{k-1}, \frac{\norms{q^{k+1}} }{\norms{\Delta{x}^k} }}$ for $\rho_k$. 

\subsection{The algorithm and its convergence}\label{subsec:Nes_convergence}
Now, we are ready to present the complete description of the proposed method in Algorithm~\ref{alg:adaptive_NesFP}.

\begin{algorithm}[!htbp]
\caption{(Parameter-Free Nesterov's Accelerated Fixed-Point Algorithm)}\label{alg:adaptive_NesFP}
\begin{algorithmic}[1]
\State\textbf{Initialization:} Choose an arbitrary initial point $x^0\in\mathcal H$.
\State \textbf{Option 1:} Choose $\omega_0$ as in \eqref{eq:new_stepsize0} and $\varphi_0 := \omega_0$.
\textbf{Option 2:} Choose $\varphi_0 = \frac{1}{\rho_0} \in \big(1, \frac{1}{\rho}\big]$.
\State Update $x^1 := \frac{x^0 + \varphi_0T(x^0)}{\varphi_0 + 1}$.
\State \textbf{For $k = 1, 2, \cdots$, perform}
\State\hspace{3ex}Compute $r^k := x^k - T(x^k)$ and $q^k := x^k - T(x^{k-1})$.
\State\hspace{3ex}If ~$\norms{r^k} = 0$,  then TERMINATE.
\State\hspace{3ex}\textbf{Option 1:} Compute $\varphi_{k} := \varphi_{k-1} + \omega_k^{2k}$.
\State\hspace{3ex}\textbf{Option 2:} Compute $\Delta_k := \norms{r^k}^2\norms{\varphi_{k-1}q^k + r^k}^2 - \rho_{k-1}^2\varphi_{k-1}^2\big(\norms{q^k}^2\norms{r^k}^2  - \iprods{q^k, r^k}^2 \big)$ and 
\begin{equation*}
\varphi_k :=   \frac{\rho_{k-1}\varphi_{k-1}\iprods{q^k, r^k} + \sqrt{\Delta_k}}{\rho_{k-1}\norms{r^k}^2}.
\end{equation*}
\State\hspace{3ex}Update
\begin{equation*}
x^{k+1} =  \frac{\varphi_{k-1} + 1}{\varphi_k + 1}x^k +  \frac{\varphi_k}{\varphi_k + 1}T(x^k) - \frac{\varphi_{k-1}}{\varphi_k + 1}T(x^{k-1}).
\end{equation*}
\State\hspace{3ex}\textbf{Option 1:} Update $\omega_{k+1}$ as in Algorithm~\ref{alg:adaptive_HP0}.
\State\hspace{3ex}\textbf{Option 2:} Update $\rho_k$ as in Algorithm~\ref{alg:adaptive_HP}.
\State\textbf{End For}
\end{algorithmic}
\end{algorithm}

\noindent
We make the following observations.
\begin{compactitem}
\item Each iteration of Algorithm~\ref{alg:adaptive_NesFP} requires only one evaluation of $T(x^k)$. The remaining computations consist of vector additions, scalar-vector multiplications, norm evaluations, and inner products, all of which require $\BigOs{p}$ elementary operations. Hence, the per-iteration computational cost of Algorithm~\ref{alg:adaptive_NesFP} is essentially the same as that of Algorithms~\ref{alg:adaptive_HP0} and \ref{alg:adaptive_HP}.

	\item In contrast to the Halpern-type schemes, including Algorithms~\ref{alg:adaptive_HP0} and \ref{alg:adaptive_HP}, the update $x^{k+1}$ in Algorithm~\ref{alg:adaptive_NesFP} does not explicitly involve the anchor point $x^0$. This feature may help reduce the accumulation of errors associated with repeatedly incorporating the anchor point. Moreover, for nonexpansive mappings, Algorithm~\ref{alg:adaptive_NesFP} may potentially achieve the faster asymptotic rate $o(1/k)$, compared with the $\BigOs{1/k}$ rates typically obtained by Halpern-type fixed-point methods \cite{tran2022connection}.
\end{compactitem}
Finally, since the scheme \eqref{eq:NesFP_scheme0} is mathematically equivalent to \eqref{eq:HP_iteration2}, the convergence of Algorithm~\ref{alg:adaptive_NesFP} follows directly from the convergence results established for Algorithms~\ref{alg:adaptive_HP0} and \ref{alg:adaptive_HP}. The following corollary formalizes this observation.

\begin{corollary}\label{co:Nes_convergence}
Let $T : \Hil \to \Hil$ be a $\rho$-contractive mapping with $\rho \in (0,1)$ and assume that $\mathrm{Fix}(T)\neq\emptyset$. Let $\{x^k\}_{k\ge0}$ be the sequence generated by Algorithm~\ref{alg:adaptive_NesFP}. Then, all the conclusions of Theorems~\ref{th:convergence_theorem1} and \ref{th:convergence_of_adaptive_HP} continue to hold.
\end{corollary}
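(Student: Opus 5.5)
The plan is to show that, under each option, the iterates of Algorithm~\ref{alg:adaptive_NesFP} coincide exactly with those of Algorithm~\ref{alg:adaptive_HP0} (Option~1) or Algorithm~\ref{alg:adaptive_HP} (Option~2). The corollary then follows by invoking Theorems~\ref{th:convergence_theorem1} and \ref{th:convergence_of_adaptive_HP} verbatim.

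\textbf{Step 1: the HP identity.} By induction on $k$, I will show that the sequence produced by \eqref{eq:NesFP_scheme0} satisfies
\begin{equation*}
(\varphi_k+1)x^{k+1} = x^0 + \varphi_k T(x^k)\quad\text{for all } k\ge 0.
\end{equation*}
\begin{compactitem}
\item For $k=0$ this is the initialization $x^1 = \frac{x^0+\varphi_0T(x^0)}{\varphi_0+1}$.
\item Assume $(\varphi_{k-1}+1)x^k = x^0+\varphi_{k-1}T(x^{k-1})$. Multiplying \eqref{eq:NesFP_scheme0} by $\varphi_k+1$ gives $(\varphi_k+1)x^{k+1} = (\varphi_{k-1}+1)x^k + \varphi_kT(x^k) - \varphi_{k-1}T(x^{k-1})$. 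Substituting the hypothesis yields $x^0+\varphi_kT(x^k)$.
\end{compactitem}
Hence, whatever the sequence $\{\varphi_k\}$ is, the iterates satisfy \eqref{eq:HP_iteration2} with $\hat\lambda_k = 1/(\varphi_k+1)$. In particular $s^k = \varphi_{k-1}q^k + r^k$, as recorded before \eqref{eq:varphi_k_update}.

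\textbf{Step 2: the parameter sequences agree.}
\begin{compactitem}
\item \emph{Option~1.} Because the iterates coincide, the quantities $\omega_k$ entering $\varphi_k = \varphi_{k-1}+\omega_k^{2k}$ are computed from the same points. The recursion therefore matches $\bar\varphi_k$ in Algorithm~\ref{alg:adaptive_HP0}, with the initialization playing the role of $\bar\varphi_0$. The proof of Theorem~\ref{th:convergence_theorem1} only uses $1+\varphi_k\ge \rho^{-2k}$ and $\omega_k\ge 1/\rho$, and both still hold.
\item \emph{Option~2.} I must check that \eqref{eq:varphi_k_update} is exactly the relation $1/(\varphi_k+1) = \hat\lambda_k$ from Step~\ref{alg2:step7} of Algorithm~\ref{alg:adaptive_HP}. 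Both quantities are characterized as the unique positive root of $\rho_{k-1}\norms{\hat\lambda s^k - r^k} = \hat\lambda\norms{s^k}$. The substitutions $\hat\lambda = 1/(\varphi+1)$ and $s^k = \varphi_{k-1}q^k+r^k$ give $\hat\lambda s^k - r^k = \hat\lambda(\varphi_{k-1}q^k - \varphi r^k)$, which turns the defining equation into the quadratic displayed before \eqref{eq:varphi_k_update}. I then expand $\rho_{k-1}^2\norms{\varphi_{k-1}q^k-\varphi r^k}^2 - \norms{\varphi_{k-1}q^k+r^k}^2 = 0$ as a quadratic in $\varphi$.
\item The leading coefficient is $\rho_{k-1}^2\norms{r^k}^2>0$, and the constant term is $\rho_{k-1}^2\varphi_{k-1}^2\norms{q^k}^2 - \norms{s^k}^2$. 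I will show this constant is negative (see below), so the quadratic has exactly one positive root. Simplifying its discriminant gives $\Delta_k$ as stated, so that root is \eqref{eq:varphi_k_update}.
\item With $\varphi_0 = 1/\rho_0$ and the same $\rho_k$ update, the two algorithms then generate identical $x^k$, $\hat\varphi_k$ and $\rho_k$. Theorem~\ref{th:convergence_of_adaptive_HP}, through Lemmas~\ref{le:key_properties}--\ref{le:implementable_step}, then applies unchanged.
\end{compactitem}

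\textbf{Main obstacle.} The delicate point is confirming that the root chosen in \eqref{eq:varphi_k_update} is the positive one, which requires $\Delta_k>0$ and the sign of the constant term. I will get both from Lemma~\ref{le:key_properties}(i) applied at step $k-1$. Writing that inequality with the current estimate gives $\rho_{k-1}\norms{x^k-x^{k-1}}\le\norms{x^k-T(x^{k-1})}$, and the identity $q^k = \hat\lambda_{k-1}s^{k-1}$ converts it into the needed comparison between $\rho_{k-1}\varphi_{k-1}\norms{q^k}$ and $\norms{s^k}$. I expect this to give strict negativity of the constant term whenever $r^k\neq 0$, using Lemma~\ref{le:implementable_step}(iii) for $s^k\neq 0$. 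If the direct estimate fails, the fallback is to avoid the explicit formula altogether: observe that Algorithm~\ref{alg:adaptive_HP}'s $\hat\lambda_k\in(0,1)$ solves the same equation, so $\varphi_k := 1/\hat\lambda_k - 1$ is a positive root. Any sign ambiguity in \eqref{eq:varphi_k_update} is then resolved by selecting this root, which already suffices for the equivalence.
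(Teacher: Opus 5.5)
Your approach is the paper's own. The paper just notes that \eqref{eq:NesFP_scheme0} comes from \eqref{eq:HP_iteration2} by eliminating $x^0$, and then appeals to Theorems~\ref{th:convergence_theorem1} and~\ref{th:convergence_of_adaptive_HP}. Your Step~1 and your Option~1 argument are sound. The Option~1 argument is in fact more careful than the paper's ``mathematically equivalent'' claim. Option~1 starts from $\varphi_0=\omega_0$, whereas Algorithm~\ref{alg:adaptive_HP0} has $\bar\varphi_0=\bar\varphi_{-1}+\omega_0^{0}=1$, so the iterates are not literally those of Algorithm~\ref{alg:adaptive_HP0}. What actually carries Option~1 is your remark that the proof of Theorem~\ref{th:convergence_theorem1} only uses the Halpern form with $\lambda_k=1/(1+\varphi_k)\in(0,1)$ and $\lambda_k\le\rho^{2k}$.

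The step you flag as delicate is not closed as written. Lemma~\ref{le:key_properties}(i) at step $k-1$ (or the defining equation of $\hat\lambda_{k-1}$) only relates $\norms{x^k-x^{k-1}}=\norms{q^k-r^{k-1}}$ to $\norms{q^k}$. It involves $r^{k-1}$ but not $r^k$, so no rewriting of $q^k$ turns it into a comparison with $\norms{s^k}=\norms{\varphi_{k-1}q^k+r^k}$. For that you must control the cross term $\iprods{s^{k-1},r^k}$, which needs contractivity. Your fallback does not close the gap either. The corollary concerns the specific $+\sqrt{\Delta_k}$ formula \eqref{eq:varphi_k_update}, and ``selecting'' the root $1/\hat\lambda_k-1$ changes the algorithm unless you show this root is the $+$ one. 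Note first that $\rho_{k-1}=\rho_0<1$ for all $k$: for the first choice $\tau_k\le\rho\le\rho_0$, and for the second $\tau_k=\rho_{k-1}$ exactly. Either of the following fixes then works.

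\emph{Fix (a).} Argue by induction on $k$ and use Lemma~\ref{le:key_properties}(ii) as in the proof of Lemma~\ref{le:implementable_step}(iii). This gives $\norms{s^k}^2\ge(1-\hat\lambda_{k-1})^2\norms{s^{k-1}}^2+\norms{r^k}^2/\hat\lambda_{k-1}>\varphi_{k-1}^2\norms{q^k}^2\ge\rho_{k-1}^2\varphi_{k-1}^2\norms{q^k}^2$, using $\varphi_{k-1}q^k=(1-\hat\lambda_{k-1})s^{k-1}$ and $r^k\neq0$. Hence the constant term of your $\varphi$-quadratic is negative, $\Delta_k>0$, and the $+$ root is the unique positive root.

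\emph{Fix (b).} The $\hat\lambda$-quadratic is $(1-\rho_{k-1}^2)\norms{s^k}^2\hat\lambda^2+2\rho_{k-1}^2\iprods{s^k,r^k}\hat\lambda-\rho_{k-1}^2\norms{r^k}^2=0$. It has roots of opposite sign, since $\rho_{k-1}<1$ and $s^k,r^k\neq0$. The map $\hat\lambda\mapsto 1/\hat\lambda-1$ sends these two roots to the two roots of the $\varphi$-quadratic. The negative root lands below $-1$, and the positive root (the $\hat\lambda_k$ of Algorithm~\ref{alg:adaptive_HP}) lands above $-1$. So $1/\hat\lambda_k-1$ is the larger root, which is exactly \eqref{eq:varphi_k_update}.

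With either fix, your Option~2 equivalence and the transfer of Theorem~\ref{th:convergence_of_adaptive_HP} go through.
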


\section{Numerical Experiments}\label{sec:num_examples}
We present a series of numerical experiments to evaluate the performance of the proposed algorithms and compare them with recent existing methods from the literature. 
Specifically, we implement the following nine algorithms.
\begin{compactitem}
\item The geometric Halpern scheme \eqref{eq:HP_iteration} from \cite{park2022exact} with $\varphi_k := \sum_{i=1}^k\rho^{-2i}$, denoted by \texttt{[Geo.~HP]}.

\item Algorithm~3.1 from \cite{he2024convergence}, denoted by \texttt{[Alg.~3.1~(He~et~al.)]}.

\item Two variants of our Algorithm~\ref{alg:adaptive_HP0}, corresponding to the update rules \eqref{eq:update_omega_k} and \eqref{eq:update_omega_k_modified}, denoted by \texttt{[Alg.~1~$(\omega_{\max})$]} and \texttt{[Alg.~1~$(\omega_{\min})$]}, respectively.

\item Two variants of Algorithm~\ref{alg:adaptive_HP}, using the two update rules for $\rho_k$ specified in the algorithm, denoted by \texttt{[Alg.~2~(adpt.)]} and \texttt{[Alg.~2~(adpt2)]}, respectively.

\item Three variants of Algorithm~\ref{alg:adaptive_NesFP}. The geometric variant corresponding to \textbf{Option~1} is denoted by \texttt{[Alg.~3~(geo.)]}. The remaining two variants, corresponding to \textbf{Option~2} with the two update rules for $\rho_k$ used in Algorithm~\ref{alg:adaptive_HP}, are denoted by \texttt{[Alg.~3~(adpt.)]} and \texttt{[Alg.~3~(adpt2)]}, respectively.
\end{compactitem}
All algorithms were implemented in Python (Jupyter Notebook) with assistance from OpenAI Codex.
The implementations were carefully reviewed and independently verified by the authors. All experiments were conducted on a MacBook Pro Apple M4, 24GB memory, and 1Tb storage.

Each algorithm is terminated once the relative fixed-point residual satisfies $\norms{x^k-T(x^k)} \leq \texttt{tol}\cdot\max\left\{1,\norms{x^0-T(x^0)}\right\}$, where $\texttt{tol}$ is a prescribed tolerance. Unless otherwise specified, the maximum number of iterations is set to $k_{\max} := 10^4$.

\subsection{Experiments on fixed-point problems with contractive mappings}\label{subsec:example1}
We conduct two groups of numerical experiments on fixed-point problems associated with contractive mappings. The first group (\textbf{Example 1.1}) consists of linear contractive mappings, whereas the second group (\textbf{Example 1.2}) considers nonlinear contractive mappings.

\vspace{2ex}
\noindent\textbf{Example 1.1.} 
We first consider a linear mapping $T(x) := \mathbf{Q}x + \mathbf{q}$, where $\mathbf{Q} \in \R^{10\times 10}$ and $\mathbf{q}\in\R^{10}$ satisfy $\rho := \norms{\mathbf{Q}}_2 < 1$. 
In our experiments, we randomly generate $n=50$ different mappings $T_i(x)$ with contraction factors $\rho \in [0.5, 0.99]$. 
The vector $\mathbf{q} \in \mathcal{N}(0, 0.5)$ is generated independently from the normal distribution. 
The true solution $x^{\star}$ can be computed explicitly as $x^{\star} := (\Id - \mathbf{Q})^{-1}\mathbf{q}$. 
For all test instances, we use the same initial point $x^0 := \mathrm{ones}(p,1)$.

Table~\ref{tbl:performance_comparison_linear_case_50} summarizes the performance of the $9$ algorithms in reaching an approximate solution $x^k$ with the stopping tolerance $\texttt{tol} = 10^{-8}$. 
Here, \textbf{Median Iter.} and \textbf{Mean Iter.} denote the median and mean numbers of iterations, respectively; \textbf{Median Res.} and \textbf{Mean Res.} report the median and mean residual norms, respectively; and \textbf{Median Dist.} and \textbf{Mean Dist.} represent the median and mean values of $\norms{x^k - x^{\star}}$, respectively.

\begin{table}[!htbp]
\centering
\caption{Performance comparison of $9$ algorithms on $50$ linear contractive mappings}
\label{tbl:performance_comparison_linear_case_50}
\setlength{\tabcolsep}{5pt}
\renewcommand{\arraystretch}{1.12}
\resizebox{\textwidth}{!}{%
\begin{tabular}{l c c c c c c c}
\toprule
\textbf{Method} & \textbf{Successes} & \textbf{Median Iter.} & \textbf{Mean Iter.} & \textbf{Median Res.} & \textbf{Mean Res.} & \textbf{Median Dist.} & \textbf{Mean Dist.} \\
\midrule
Geo. HP & 50 & 25.5 & 74.5 & 3.05e-08 & 3.05e-08 & 4.05e-08 & 4.23e-08 \\
Alg. 3.1 (He et al.) & 0 & 10000.0 & 10000.0 & 8.63e-05 & 9.14e-05 & 1.05e-04 & 1.04e-04 \\
Alg. 1 ($\omega_{\max}$) & 50 & 20.5 & 23.1 & 2.86e-08 & 2.76e-08 & 2.55e-08 & 3.47e-08 \\
Alg. 1 ($\omega_{\min}$) & 50 & 21.0 & 24.1 & 2.47e-08 & 2.65e-08 & 2.54e-08 & 3.55e-08 \\
Alg. 2 (adpt.) & 50 & 28.0 & 28.5 & 2.84e-08 & 2.91e-08 & 3.95e-08 & 4.46e-08 \\
Alg. 2 (adpt2) & 50 & 24.5 & 27.0 & 2.73e-08 & 2.94e-08 & 3.74e-08 & 4.47e-08 \\
Alg. 3 (geo.) & 50 & 21.0 & 23.7 & 2.75e-08 & 2.75e-08 & 2.47e-08 & 3.40e-08 \\
Alg. 3 (adpt.) & 50 & 28.0 & 28.5 & 2.84e-08 & 2.91e-08 & 3.95e-08 & 4.46e-08 \\
Alg. 3 (adpt2) & 50 & 24.5 & 27.0 & 2.73e-08 & 2.94e-08 & 3.74e-08 & 4.47e-08 \\
\bottomrule
\end{tabular}%
}
\end{table}

As shown in Table~\ref{tbl:performance_comparison_linear_case_50}, all variants of the proposed methods perform well on this test problem and consistently outperform \texttt{Geo.~HP} in terms of the number of iterations. 
Among them, \texttt{Alg.~1~$(\omega_{\max})$} and \texttt{Alg.~3~(geo.)} exhibit the best overall performance. 
In contrast, \texttt{Alg.~3.1~(He et al.)} from \cite{he2024convergence} fails to converge within the maximum limit of $10^4$ iterations, reaching only an approximate solution with an accuracy of approximately $10^{-4}$. 
This behavior is expected, as the method in \cite{he2024convergence} is designed for general nonexpansive mappings, whereas the methods proposed in this paper are specifically tailored to exploit the stronger contractivity assumption.

Figure~\ref{fig:lin_maps_50} further illustrates the performance of the seven successful algorithms. 
The left panel shows the distribution of the numbers of iterations required for convergence, while the right panel reports the final residual attained by each algorithm on each of the $50$ test mappings.

\begin{figure}[!h]
	\centering
	\includegraphics[width=\textwidth]{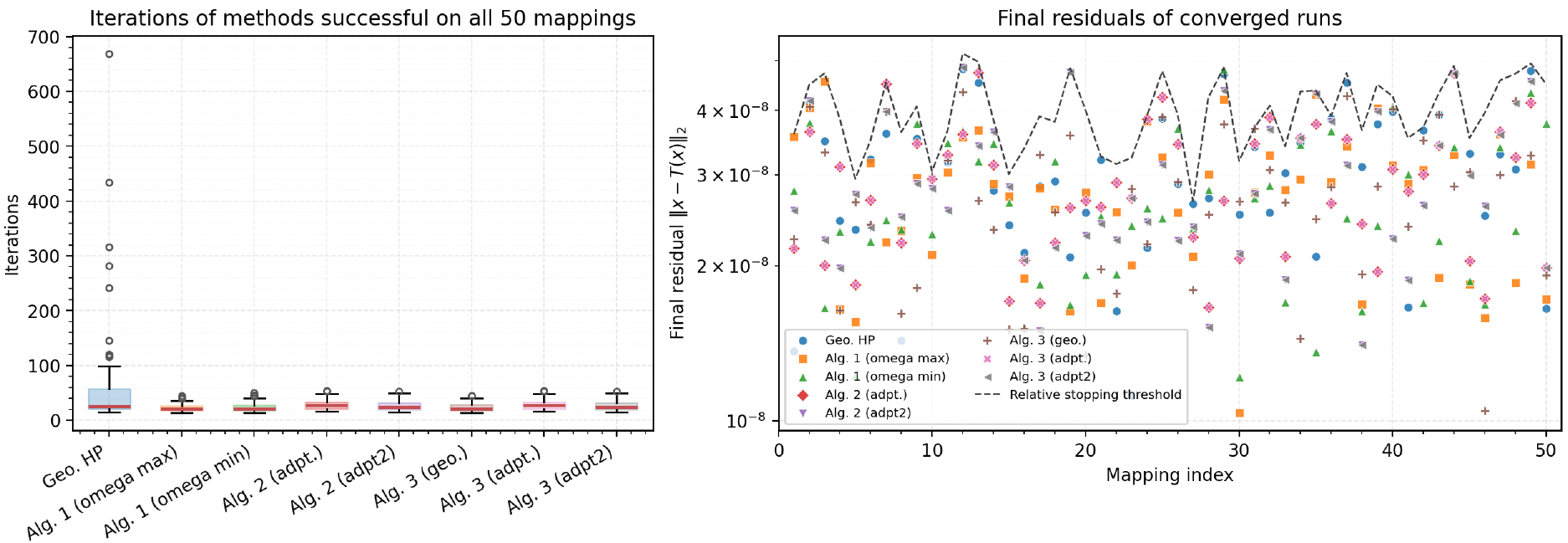}
	\caption{Performance comparison of the $8$ algorithms to solve \eqref{eq:FP} on $50$ linear mappings}
	\label{fig:lin_maps_50}
\end{figure}

\vspace{2ex}
\noindent\textbf{Example 1.2.} 
Next, we perform a similar experiment to \textbf{Example~1.1}, but with nonlinear mappings of the form $T(x) :=  \alpha\tanh(\mathbf{Q}x + \mathbf{q})) + \mathbf{f}$.
To construct a contractive mapping $T$, we proceed as follows. We first generate $\mathbf{Q} \in \R^{10\times 10}$ with entries drawn independently from the standard normal distribution and then rescale it so that $\norms{\mathbf{Q}}_2 \in [0.5, 1.5]$. 
Next, we choose $\alpha = \frac{\rho}{\norms{\mathbf{Q}}_2}$ for $\rho \in [0.5, 0.99]$. 
Finally, the vectors $\mathbf{q}$ and $\mathbf{f}$ are generated independently from the standard normal distribution. 
By construction, the resulting mapping $T$ is $\rho$-contractive with $\rho \in [0.5, 0.99]$.

Using this procedure, we generate $50$ random mappings and evaluate the same $9$ algorithms considered in \textbf{Example~1.1}. 
All experimental settings are identical to those in \textbf{Example~1.1}. 
The results are summarized in Table~\ref{tbl:performance_comparison_nonlinear_case_50}, which reports the same performance metrics as Table~\ref{tbl:performance_comparison_linear_case_50}.

\begin{table}[!htbp]
\centering
\caption{Performance comparison of $9$ algorithms on $50$ nonlinear contractive mappings}
\label{tbl:performance_comparison_nonlinear_case_50}
\setlength{\tabcolsep}{5pt}
\renewcommand{\arraystretch}{1.12}
\resizebox{\textwidth}{!}{%
\begin{tabular}{l c c c c c c c}
\toprule
\textbf{Method} & \textbf{Successes} & \textbf{Median Iter.} & \textbf{Mean Iter.} & \textbf{Median Res.} & \textbf{Mean Res.} & \textbf{Median Dist.} & \textbf{Mean Dist.} \\
\midrule
Geo. HP & 50 & 36.0 & 60.8 & 3.39e-08 & 3.62e-08 & 3.48e-08 & 3.67e-08 \\
Alg. 3.1 (He et al.) & 0 & 10000.0 & 10000.0 & 8.63e-05 & 8.97e-05 & 9.51e-05 & 9.37e-05 \\
Alg. 1 ($\omega_{\max}$) & 50 & 14.0 & 14.7 & 2.79e-08 & 2.92e-08 & 2.00e-08 & 2.10e-08 \\
Alg. 1 ($\omega_{\min}$) & 50 & 14.0 & 14.8 & 2.40e-08 & 2.56e-08 & 1.69e-08 & 2.03e-08 \\
Alg. 2 (adpt.) & 50 & 16.0 & 16.9 & 2.87e-08 & 2.94e-08 & 3.13e-08 & 2.96e-08 \\
Alg. 2 (adpt2) & 50 & 15.0 & 15.9 & 2.32e-08 & 2.60e-08 & 2.27e-08 & 2.51e-08 \\
Alg. 3 (geo.) & 50 & 13.0 & 14.2 & 2.31e-08 & 2.79e-08 & 1.74e-08 & 2.10e-08 \\
Alg. 3 (adpt.) & 50 & 16.0 & 16.9 & 2.87e-08 & 2.94e-08 & 3.13e-08 & 2.96e-08 \\
Alg. 3 (adpt2) & 50 & 15.0 & 15.9 & 2.32e-08 & 2.60e-08 & 2.27e-08 & 2.51e-08 \\
\bottomrule
\end{tabular}%
}
\end{table}

Again, we observe performance trends similar to those in \textbf{Example~1.1}. 
All proposed methods successfully solve the nonlinear test problems, with \texttt{Alg.~1~$(\omega_{\max})$} and \texttt{Alg.~3~(geo.)} remaining among the best-performing methods in terms of the number of iterations. 
Figure~\ref{fig:nonlin_maps_50} further illustrates the performance of the eight successful algorithms. 
The left panel shows the distribution of the numbers of iterations required for convergence, while the right panel reports the final residual attained by each algorithm on each of the $50$ nonlinear mappings.

\begin{figure}[!h]
	\centering
	\includegraphics[width=\textwidth]{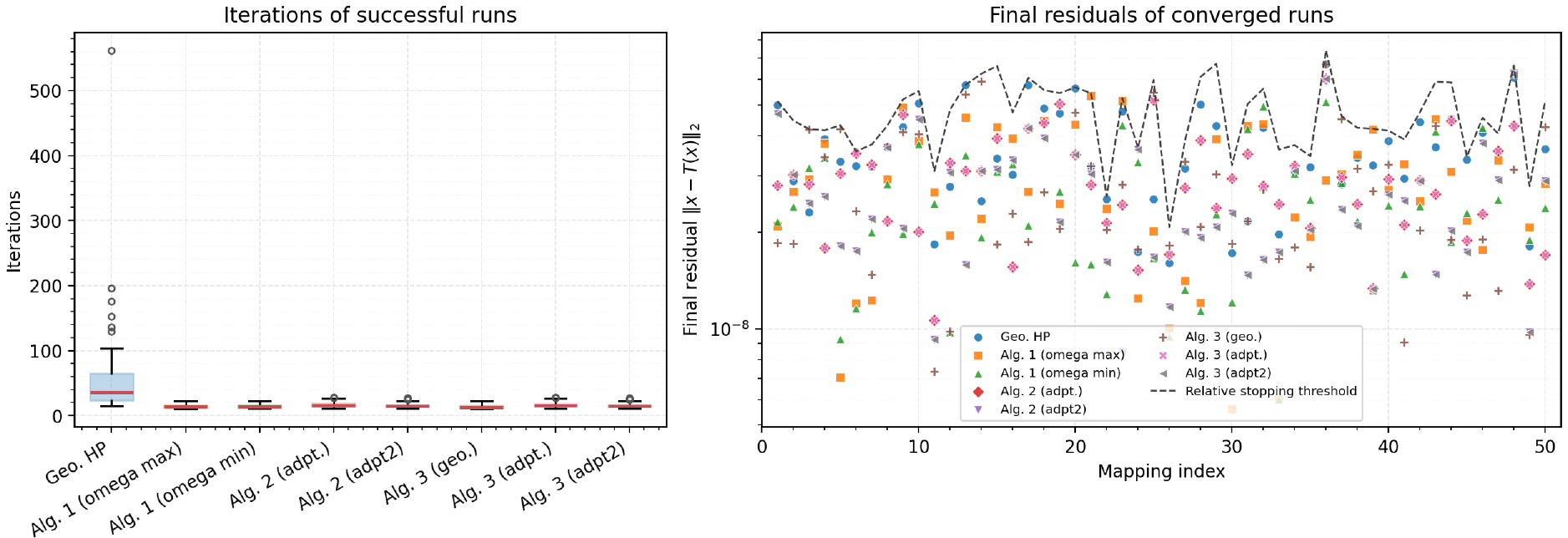}
	\caption{Performance comparison of the $7$ successful algorithms to solve \eqref{eq:FP} on $50$ nonlinear mappings}
	\label{fig:nonlin_maps_50}
\end{figure}

\subsection{Experiments on fixed-point problems of nonexpansive mapping}\label{subsec:exam2}
In this section, we evaluate the performance of the proposed algorithms on fixed-point problems involving nonexpansive mappings. 
Specifically, we compare the nine algorithms developed in this paper with the adaptive Halpern method of \cite{he2024convergence} and the classical Halpern fixed-point iteration using the step size $\lambda_k = \frac{1}{k+1}$. 
Since this iteration is equivalent to the Halpern method in \cite{park2022exact} when the contraction factor is unavailable, we continue to denote it by \texttt{Geo.~HP}.

\vspace{2ex}
\noindent\textbf{Example 2.1.}
We consider a nonexpansive mapping $T : \R^3 \to \R^3$ studied in \cite{he2021optimal,he2024convergence}, which is explicitly defined as follows:
\begin{equation}\label{eq:T_exam3}
T(x) := \frac{1}{54.5}\begin{pmatrix}
-35x_1 - \sqrt{|x_1|+1} - 10x_2 + 14x_3 + 1\\
-10x_1 - 26x_2 - 0.5\sin(x_2) + 4x_3\\
14x_1 + 4x_2 - 38x_3 - \arctan(0.5x_3).
\end{pmatrix}
\end{equation}
As claimed in  \cite{he2021optimal,he2024convergence}, this mapping is nonexpansive. 
Moreover, its fixed-point is $x^{\star} = (0, 0, 0)^{\top}$.

Now, we apply the $9$ algorithms described above to approximate a fixed point of this mapping. 
We choose the initial point $x^0 = (1, 1, 1)^{\top}$ and run each algorithm for at most $50,\!000$ iterations. 
The algorithms are terminated once the relative residual reaches the prescribed tolerance $\mathrm{tol} = 10^{-8}$. 
Figure~\ref{fig:Exam51} illustrates the performance of the algorithms, where the left panel shows the evolution of the fixed-point residual $\norms{x^k - T(x^k)}$ and the right panel shows the distance to the solution $\norms{x^k - x^{\star}}$ as functions of the iteration count.

\begin{figure}[!h]
	\centering
	\includegraphics[width=\textwidth]{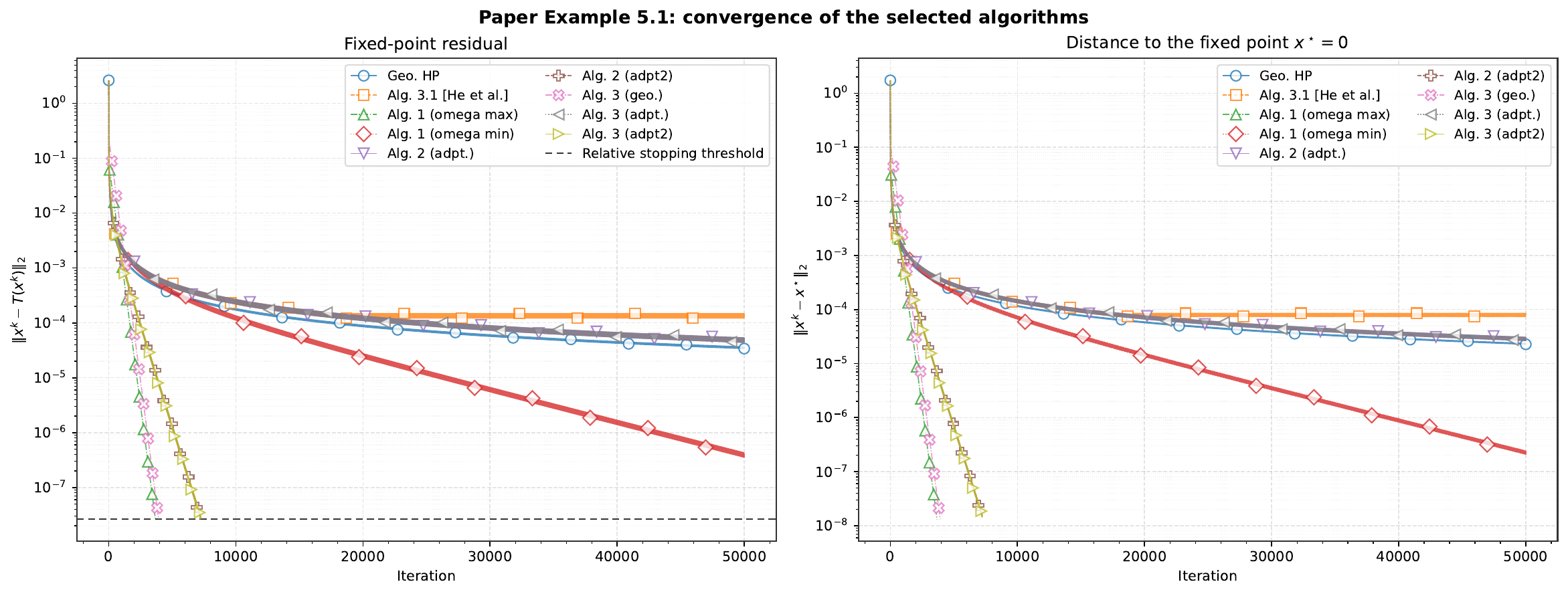}
	\caption{The convergence progress of the $9$ algorithms to approximate a fixed point of $T$ in \eqref{eq:T_exam3}}
	\label{fig:Exam51}
\end{figure}

As shown in Figure~\ref{fig:Exam51}, \texttt{Geo.~HP}, \texttt{Alg.~3.1.~(He et al.)}, \texttt{Alg.~2~(adpt.)}, and \texttt{Alg.~3~(adpt.)} exhibit similar convergence behavior during the early stages of the iterations. 
However, when the algorithms are run for a sufficiently large number of iterations, \texttt{Alg.~3.1.~(He et al.)} becomes the slowest among these methods. 
The method \texttt{Alg.~1~$(\omega_{\min})$} performs noticeably better, while the remaining four proposed methods achieve substantially faster convergence.
Among all the methods tested, \texttt{Alg.~1~$(\omega_{\max})$} and \texttt{Alg.~3~(geo.)} deliver the best overall performance.

\vspace{2ex}
\noindent\textbf{Example 2.2.} 
Following \cite{he2024convergence}, we evaluate the proposed algorithms on the well-known LASSO (Least Absolute Shrinkage and Selection Operator) problem. 
In particular, we consider the same experimental setup as Example~5.2 in \cite{he2024convergence}, where the problem is formulated as follows:
\begin{equation}\label{eq:lasso}
\min_{x \in \R^p} \Big\{\Lc(x) := \tfrac{1}{2m}\norms{Ax - b}^2_2 + \tau \norms{x}_1 \Big\},
\end{equation}
where $A \in \R^{m\times p}$ and $b \in \R^m$ are the input data, and $\tau > 0$ is a regularization parameter.

If we define $T(x) : = \mathrm{prox}_{\gamma\tau\norms{\cdot}_1}\big(x - \gamma A^{\top}(Ax - b) \big)$, then $T$ is nonexpansive, where $\gamma > 0$ and $\mathrm{prox}_f$ is the proximal operator of $f$.
Solving \eqref{eq:lasso} is equivalent to finding a fixed-point $x^{\star}$ of $T$.

To test our algorithms and their competitors, we use $10$ LASSO problems with dimensions $p = 512j$ for $j = 1,\ldots,10$, and $m = \lceil0.6p\rceil$. 
For each problem, we generate $30$ random instances.
The design matrix $A$ is generated from a correlated Gaussian model with AR(1) covariance (i.e., autoregressive of order $1$), where the correlation coefficient is set to $0.3$, and each column is normalized to have the $\ell_2$-norm $\sqrt{m}$. 
The ground-truth solution $x^{\sharp}$ is chosen to be sparse with approximately $5\%$ nonzero entries, whose magnitudes are sampled from a shifted Gaussian distribution to avoid arbitrarily small coefficients. 
The observation vector is generated according to $y=Ax^\star+\varepsilon$, where the noise vector $\varepsilon$ is scaled to achieve a signal-to-noise ratio of $25$ dB. 
For each problem instance, the regularization parameter is selected as $\tau = 0.05\,\tau_{\max}$ with $\tau_{\max}=\frac{\norms{A^\top y}_\infty}{m}$, where $\tau_{\max}$ is the smallest value of $\tau$ for which the zero vector is an optimal solution to \eqref{eq:lasso}.

We evaluate the $8$ adaptive algorithms described above on $300$ LASSO instances spanning $10$ different problem dimensions. 
For all experiments, we use the initial point $x^0 = \mathrm{ones}(p,1)$. 
Each algorithm is terminated once either the maximum number of $5000$ iterations is reached or the relative error satisfies the prescribed tolerance $\mathrm{tol} = 10^{-6}$.

The aggregate performance of the $8$ algorithms is reported in Table~\ref{tbl:aggregate_performance_lasso}. 
Here, \textbf{Obj.} and \textbf{Time (s)} denote the mean objective value and the mean CPU time (in seconds), respectively. 
The remaining performance metrics are the same as those reported in Table~\ref{tbl:performance_comparison_linear_case_50}.

\begin{table}[!htbp]
\centering
\small
\caption{Overall performance of $8$ algorithms on $30$ instances of $10$ different LASSO problems.}
\label{tbl:aggregate_performance_lasso}
\setlength{\tabcolsep}{5pt}
\renewcommand{\arraystretch}{1.12}
\resizebox{\textwidth}{!}{%
\begin{tabular}{lrrrrrrrr}
\toprule
\textbf{Method} {\!\!\!\!\!} & {\!\!\!\!} \textbf{Success} {\!\!\!} & {\!\!\!} \textbf{Median Iter.} {\!\!\!} & {\!\!\!} \textbf{Mean Iter.} {\!\!\!} & {\!\!\!} \textbf{Median Rel. Res.} {\!\!\!} & {\!\!\!} \textbf{Mean Res.} {\!\!\!} & {\!\!\!} \textbf{Max Res.} {\!\!\!} & {\!\!\!} \textbf{Obj.} {\!\!\!} & {\!\!\!} \textbf{Time[s]} {\!\!\!} \\
\midrule
Alg. 3.1 [He et al.] & 0/300 & 5000.0 & 5000.0 & 1.94e-06 & 8.63e-05 & 8.63e-05 & 52.34899 & 5.5314 \\
Alg. 1 (omega max) & 300/300 & 78.0 & 78.9 & 9.20e-07 & 3.94e-05 & 5.98e-05 & 52.34816 & 0.0904 \\
Alg. 1 (omega min) & 300/300 & 84.0 & 87.7 & 9.29e-07 & 3.96e-05 & 6.00e-05 & 52.34825 & 0.0957 \\
Alg. 2 (adpt.) & 300/300 & 96.0 & 97.3 & 9.32e-07 & 3.99e-05 & 5.99e-05 & 52.34845 & 0.1102 \\
Alg. 2 (adpt2) & 300/300 & 92.0 & 92.3 & 9.26e-07 & 3.96e-05 & 5.96e-05 & 52.34841 & 0.1053 \\
Alg. 3 (geo.) & 300/300 & 78.0 & 79.0 & 9.25e-07 & 3.95e-05 & 6.10e-05 & 52.34816 & 0.0901 \\
Alg. 3 (adpt.) & 300/300 & 96.0 & 97.3 & 9.32e-07 & 3.99e-05 & 5.99e-05 & 52.34845 & 0.1105 \\
Alg. 3 (adpt2) & 300/300 & 92.0 & 92.3 & 9.26e-07 & 3.96e-05 & 5.96e-05 & 52.34841 & 0.1056 \\
\bottomrule
\end{tabular}%
}
\end{table}

\begin{figure}[!h]
	\centering
	\includegraphics[width=\textwidth]{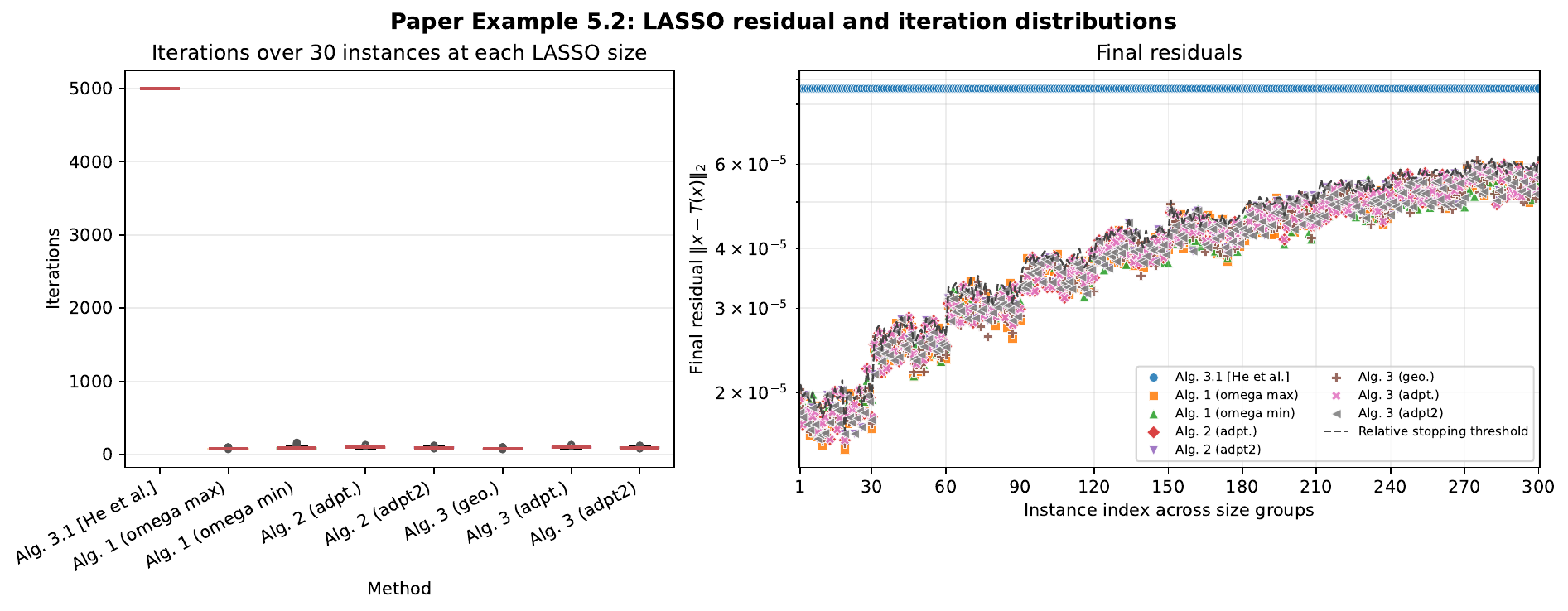}
	\caption{The distribution of iteration numbers $($left$)$ and residuals $($right$)$ of 8 algorithms on LASSO problem}
	\label{fig:aggregate_performance_lasso}
\end{figure}

As shown in Table~\ref{tbl:aggregate_performance_lasso}, \texttt{Alg.~3.1~(He~et~al.)} fails to achieve the prescribed relative residual tolerance of $10^{-6}$ within the maximum limit of $5000$ iterations. 
In contrast, all the proposed algorithms successfully attain the target accuracy in fewer than $100$ iterations on average. 
Consistent with the previous experiments, \texttt{Alg.~1~$(\omega_{\max})$} and \texttt{Alg.~3~(geo.)} exhibit the best overall performance.

Figure~\ref{fig:aggregate_performance_lasso} further illustrates the performance of the algorithms. 
The left panel shows the distribution of the numbers of iterations required for convergence, while the right panel reports the final residual attained by each algorithm over all problem instances. 
We observe that the numbers of iterations remain remarkably stable across different problem instances and problem dimensions, indicating that the proposed methods are robust with respect to the problem size.
 
\vspace{2ex}
\noindent\textbf{Example 2.3.} 
We perform experiments on a collection of $50$ nonexpansive mappings defined by
\begin{equation*}
T(x) := \frac{1}{2}\big( \mathrm{proj}_C(x) + Q\tanh(x) \big), \quad \textrm{where}\quad C := [-1, 1]^p \ \textrm{and} \ Q^{\top}Q = \Id_p.
\end{equation*}
Here, $\mathrm{proj}_C$ denotes the projection onto the closed and convex set $C$. 
It is straightforward to verify that $T$ is nonexpansive but not globally contractive. 
Moreover, $T$ has the unique fixed point $x^{\star} = 0$.

To generate the test instances, we randomly construct $50$ mappings of this form in $\R^{10}$ by sampling random orthonormal matrices $Q$. 
We then apply the $9$ algorithms to these $50$ instances. 
Each algorithm is run for at most $30,\!000$ iterations and is terminated once the prescribed tolerance $\mathrm{tol} = 10^{-5}$ is achieved. 
For all experiments, we use the initial point $x^0 = \mathrm{ones}(p,1)$.

The aggregate performance of the algorithms is summarized in Table~\ref{tbl:performance_comparison_nonexpansive_mapping_50}, where the reported performance metrics are the same as those in Table~\ref{tbl:performance_comparison_linear_case_50}. 
Figure~\ref{fig:nonlin_nonexpansive_maps_50} provides a complementary visualization of the results by showing the distribution of the numbers of iterations (left) and the final relative residuals (right) for the algorithms that successfully converged.

\begin{table}[!htbp]
\centering
\caption{Performance comparison of $9$ algorithms on $50$ nonlinear nonexpansive mappings}
\label{tbl:performance_comparison_nonexpansive_mapping_50}
\setlength{\tabcolsep}{5pt}
\renewcommand{\arraystretch}{1.12}
\resizebox{\textwidth}{!}{%
\begin{tabular}{l c c c c c c c}
\toprule
\textbf{Method} & \textbf{Successes} & \textbf{Median Iter.} & \textbf{Mean Iter.} & \textbf{Median Res.} & \textbf{Mean Res.} & \textbf{Median Dist.} & \textbf{Mean Dist.} \\
\midrule
Geo. HP & 0 & 30000.0 & 30000.0 & 1.05e-04 & 1.05e-04 & 8.29e-02 & 8.22e-02 \\
Alg. 3.1 (He et al.) & 0 & 30000.0 & 30000.0 & 8.63e-05 & 8.63e-05 & 7.69e-02 & 7.63e-02 \\
Alg. 1 ($\omega_{\max}$) & 50 & 1633.0 & 1526.8 & 2.70e-05 & 2.50e-05 & 8.74e-02 & 8.44e-02 \\
Alg. 1 ($\omega_{\min}$) & 50 & 2944.5 & 2617.1 & 2.70e-05 & 2.55e-05 & 7.74e-02 & 7.57e-02 \\
Alg. 2 (adpt.) & 50 & 2280.0 & 2271.1 & 2.70e-05 & 2.56e-05 & 8.08e-02 & 7.81e-02 \\
Alg. 2 (adpt2) & 50 & 1703.5 & 1625.5 & 2.70e-05 & 2.56e-05 & 8.69e-02 & 8.40e-02 \\
Alg. 3 (geo.) & 50 & 1633.5 & 1528.9 & 2.69e-05 & 2.57e-05 & 8.74e-02 & 8.44e-02 \\
Alg. 3 (adpt.) & 50 & 2280.0 & 2271.1 & 2.70e-05 & 2.56e-05 & 8.08e-02 & 7.81e-02 \\
Alg. 3 (adpt2) & 50 & 1703.5 & 1625.5 & 2.70e-05 & 2.56e-05 & 8.69e-02 & 8.40e-02 \\
\bottomrule
\end{tabular}%
}
\end{table}

\begin{figure}[!h]
	\centering
	\includegraphics[width=\textwidth]{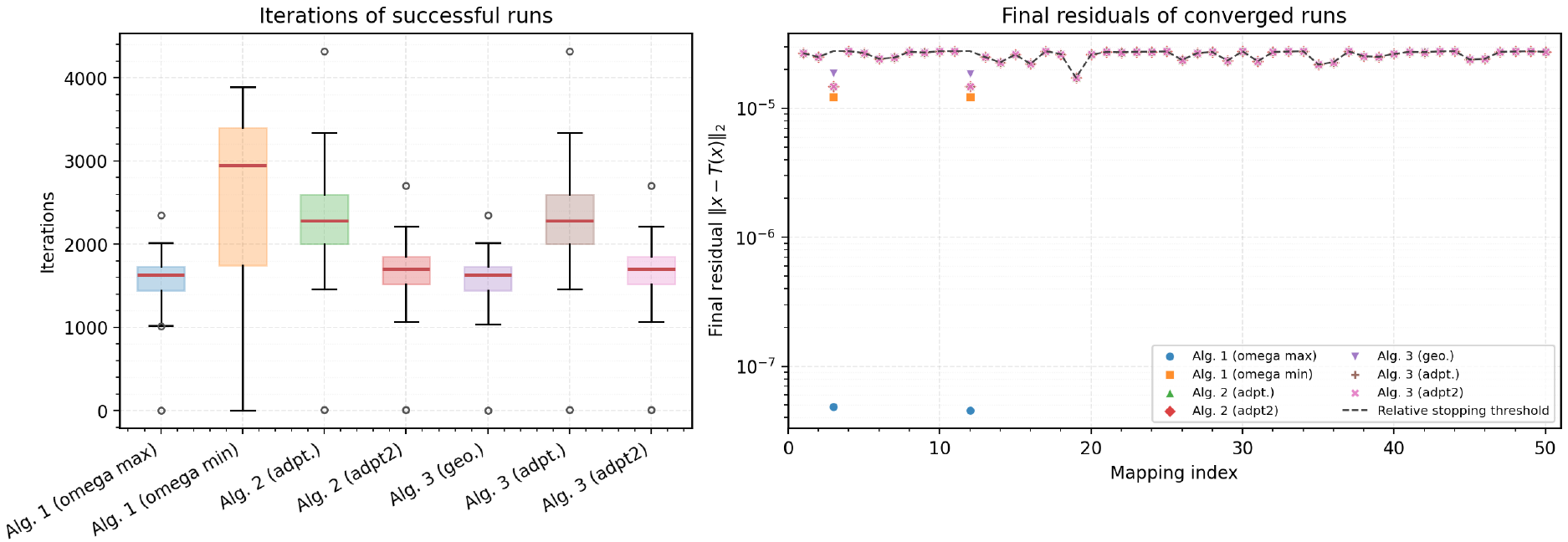}
	\caption{Performance comparison of the $7$ algorithms to solve \eqref{eq:FP} on $50$ nonlinear nonexpansive mappings}
	\label{fig:nonlin_nonexpansive_maps_50}
\end{figure}

As shown in Table~\ref{tbl:performance_comparison_nonexpansive_mapping_50}, \texttt{Geo.~HP}, which uses the step size $\lambda_k = \frac{1}{k+1}$, does not achieve the prescribed tolerance $\mathrm{tol} = 10^{-6}$ within the maximum limit of $30,\!000$ iterations. 
Similarly, \texttt{Alg.~3.1.~(He~et~al.)}, whose theoretical convergence rate is $\BigOs{1/k}$, also fails to attain the target accuracy within the allotted iterations. 
In contrast, all the proposed methods successfully reach the prescribed tolerance, requiring fewer than $3000$ iterations on average. 
Among them, \texttt{Alg.~1~$(\omega_{\max})$} and \texttt{Alg.~3~(geo.)} exhibit the best overall performance, while the remaining proposed methods perform comparably.

This experiment further demonstrates that the proposed algorithms remain effective for nonexpansive mappings, even though they are primarily designed for contractive mappings. 
As expected, their convergence is generally slower than in the contractive setting, especially when applying to this challenging example, reflecting the weaker assumptions on the underlying mapping.

\vspace{1ex}
\noindent\textbf{Acknowledgements.}
This work was partially supported by the National Science Foundation (NSF) through the RTG grant DMS-2134107 and by the Office of Naval Research (ONR) under grant No.~N00014-23-1-2588. 
A substantial portion of this work was completed while the first two authors, Quoc Tran-Dinh and Pham Ngoc Anh, were visiting the Vietnam Institute for Advanced Study in Mathematics (VIASM), Hanoi, Vietnam, in June 2026.

\vspace{1ex}
\noindent\textbf{Declaration of AI-Assisted Tools.}
During the preparation of this manuscript, we used large language models, including ChatGPT and Gemini, to assist in verifying elementary mathematical derivations, algebraic manipulations, and properties of mathematical expressions. We also used Codex to assist with implementing the proposed algorithms, generating synthetic data, and preparing numerical experiments.
All mathematical proofs, algorithmic implementations, numerical results, and conclusions presented in this paper were carefully reviewed and independently verified by the authors. The authors take full responsibility for the correctness and integrity of the content of this manuscript.

\bibliographystyle{plain}

\end{document}

%% file: my_preamble.tex
\usepackage{amsthm}
\usepackage{amsmath}
\usepackage{amssymb}
\usepackage{graphicx}
\usepackage{color}
\usepackage{ifpdf}
\usepackage{url}
\usepackage{algorithm}
\usepackage[usenames,dvipsnames]{xcolor}
\usepackage{paralist}

\usepackage{algorithm}
\usepackage{algpseudocode}
\usepackage{algorithmicx}
\usepackage[ruled,vlined, linesnumbered, algo2e]{algorithm2e}
\usepackage[ruled,vlined]{algorithm2e}

\usepackage[T1]{fontenc} 
\usepackage[letterpaper, margin=1.2in]{geometry}
\usepackage{multicol} 
\usepackage[hang, small,labelfont=bf,up,textfont=it,up]{caption} 
\usepackage{booktabs} 
\usepackage{float} 
\theoremstyle{plain}
\newtheorem{theorem}{Theorem}[section]
\newtheorem{corollary}{Corollary}[section]
\newtheorem{lemma}{Lemma}[section]

\theoremstyle{definition}

\newtheorem{remark}{Remark}[section]
\usepackage{todonotes}

\newcommand{\Id}{\mathbb{I}}

\newcommand{\R}{\mathbb{R}}
\newcommand{\Hil}{\mathcal{H}}

\newcommand{\set}[1]{\left\{#1\right\}}
\newcommand{\sets}[1]{\{#1\}}
\newcommand{\norm}[1]{\left\Vert#1\right\Vert}
\newcommand{\norms}[1]{\Vert#1\Vert}

\newcommand{\prox}{\mathrm{prox}}

\newcommand{\zero}[1]{{\boldsymbol{0}}}

\newcommand{\zer}[1]{\mathrm{zer}(#1)}

\newcommand{\mcal}[1]{\mathcal{#1}}

\newcommand{\Lc}{\mathcal{L}}

\newcommand{\Tc}{\mathcal{T}}

\newcommand{\iprod}[1]{\left\langle #1\right\rangle}
\newcommand{\iprods}[1]{\langle #1\rangle}

\newcommand{\BigOs}[1]{\mathcal{O}\big(#1\big)}

\newcommand{\mbf}[1]{\mathbf{#1}}


%% file: ParaFreeHP_Algs_FinalDraft.bbl
\begin{thebibliography}{10}
\itemsep=0.0em
\bibitem{alakoya2021modified}
T.~Alakoya, L.~Jolaoso, and O.~T. Mewomo.
\newblock Modified inertial subgradient extragradient method with self adaptive
  stepsize for solving monotone variational inequality and fixed point
  problems.
\newblock {\em Optimization}, 70(3):545--574, 2021.

\bibitem{attouch2020convergence}
H.~Attouch and A.~Cabot.
\newblock Convergence of a relaxed inertial proximal algorithm for maximally
  monotone operators.
\newblock {\em Math. Program.}, 184(1):243--287, 2020.

\bibitem{Banach1922}
S.~Banach.
\newblock Sur les opérations dans les ensembles abstraits et leur application
  aux équations intégrales.
\newblock {\em Fundamenta Mathematicae}, 3:133--181, 1922.

\bibitem{Barzilai1988}
J.~Barzilai and J.~M. Borwein.
\newblock {Two-Point Step Size Gradient Methods}.
\newblock {\em IMA J Numer Anal}, 8(1):141--148, January 1988.

\bibitem{Bauschke2011}
H.~H. Bauschke and P.~Combettes.
\newblock {\em Convex analysis and monotone operators theory in {H}ilbert
  spaces}.
\newblock Springer-Verlag, 2nd edition, 2017.

\bibitem{bot2022bfast}
R.~I. Bot and D.~K. Nguyen.
\newblock Fast {K}rasnosel\'skii-{M}ann algorithm with a convergence rate of
  the fixed point iteration of $o(1/k)$.
\newblock {\em SIAM J. Numer. Anal.}, 61(6):2813--2843, 2023.

\bibitem{ioan2023relaxed}
R.~I. Bot, M.~Sedlmayer, and P.~T. Vuong.
\newblock A relaxed inertial forward-backward-forward algorithm for solving
  monotone inclusions with application to {GANs}.
\newblock {\em J. Mach. Learn. Res. (JMLR)}, 24:1--37, 2023.

\bibitem{carmon2022making}
Y.~Carmon and O.~Hinder.
\newblock Making {SGD} parameter-free.
\newblock In {\em Conference on learning theory}, pages 2360--2389. PMLR, 2022.

\bibitem{colao2015krasnoselskii}
V.~Colao and G.~Marino.
\newblock {K}rasnoselskii-{M}ann method for non-self mappings.
\newblock {\em Fixed Point Theory and Applications}, 2015(1):39, 2015.

\bibitem{daskalakis2018training}
C.~Daskalakis, A.~Ilyas, V.~Syrgkanis, and H.~Zeng.
\newblock Training {GANs} with {O}ptimism.
\newblock In {\em International Conference on Learning Representations (ICLR
  2018)}, pages 1--9, 2018.

\bibitem{defazio2022parameter}
A.~Defazio and K.~Mishchenko.
\newblock Parameter free dual averaging: {O}ptimizing {L}ipschitz functions in
  a single pass.
\newblock In {\em OPT 2022: Optimization for Machine Learning (NeurIPS 2022
  Workshop)}, 2022.

\bibitem{defazio2023learning}
A.~Defazio and K.~Mishchenko.
\newblock Learning-rate-free learning by d-adaptation.
\newblock In {\em International conference on machine learning}, pages
  7449--7479. PMLR, 2023.

\bibitem{diakonikolas2020halpern}
J.~Diakonikolas.
\newblock {H}alpern iteration for near-optimal and parameter-free monotone
  inclusion and strong solutions to variational inequalities.
\newblock In {\em Conference on Learning Theory}, pages 1428--1451. PMLR, 2020.

\bibitem{Duchi2011}
J.~Duchi, E.~Hazan, and Y.~Singer.
\newblock Adaptive subgradient methods for online learning and stochastic
  optimization.
\newblock {\em J. Mach. Learn. Res.}, 12:2121--2159, 2011.

\bibitem{Facchinei2003}
F.~Facchinei and J.-S. Pang.
\newblock {\em Finite-dimensional variational inequalities and complementarity
  problems}, volume 1-2.
\newblock Springer-Verlag, 2003.

\bibitem{fercoq2019adaptive}
O.~Fercoq and Z.~Qu.
\newblock Adaptive restart of accelerated gradient methods under local
  quadratic growth condition.
\newblock {\em IMA Journal of Numerical Analysis}, 39(4):2069--2095, 2019.

\bibitem{halpern1967fixed}
B.~Halpern.
\newblock Fixed points of nonexpanding maps.
\newblock {\em Bull. Am. Math. Soc.}, 73(6):957--961, 1967.

\bibitem{he2021optimal}
S.~He, Q.-L. Dong, H.~Tian, and X.-H. Li.
\newblock On the optimal relaxation parameters of {K}rasnosel'ski--{M}ann
  iteration.
\newblock {\em Optimization}, 70(9):1959--1986, 2021.

\bibitem{he2018totally}
S.~He, T.~Wu, A.~Gibali, and Q.-L. Dong.
\newblock Totally relaxed, self-adaptive algorithm for solving variational
  inequalities over the intersection of sub-level sets.
\newblock {\em Optimization}, 67(9):1487--1504, 2018.

\bibitem{he2024convergence}
S.~He, H.-K. Xu, Q-L. Dong, and N.~Mei.
\newblock Convergence analysis of the {H}alpern iteration with adaptive
  anchoring parameters.
\newblock {\em Math. Comput.}, 93(345):327--345, 2024.

\bibitem{ito2023parameter}
M.~Ito, Z.~Lu, and C.~He.
\newblock A parameter-free conditional gradient method for composite
  minimization under {H}{\"o}lder condition.
\newblock {\em Journal of Machine Learning Research}, 24(166):1--34, 2023.

\bibitem{jordan2024muon}
K.~Jordan, Y.~Jin, V.~Boza, J.~You, F.~Cesista, L.~Newhouse, and J.~Bernstein.
\newblock {Muon}: {A}n optimizer for hidden layers in neural networks.
\newblock {\em URL https://kellerjordan. github. io/posts/muon}, 6(3):4, 2024.

\bibitem{KingmaB14}
Diederik~P. Kingma and Jimmy Ba.
\newblock Adam: A method for stochastic optimization.
\newblock {\em CoRR}, abs/1412.6980, 2014.

\bibitem{Konnov2001}
I.V. Konnov.
\newblock {\em Combined relaxation methods for variational inequalities.}
\newblock Springer-Verlag, 2001.

\bibitem{Krasnoselskii1955}
M.~A. Krasnosel'ski\v{i}.
\newblock Two remarks on the method of successive approximations.
\newblock {\em Uspekhi Matematicheskikh Nauk}, 10(1):123--127, 1955.
\newblock In Russian.

\bibitem{lan2026optimal}
G.~Lan, Y.~Ouyang, and Z.~Zhang.
\newblock Optimal and parameter-free gradient minimization methods for convex
  and nonconvex optimization.
\newblock {\em Math. Program.}, pages 1--40, 2026.

\bibitem{li2025simple}
T.~Li and G.~Lan.
\newblock A simple uniformly optimal method without line search for convex
  optimization.
\newblock {\em Math. Program.}, pages 1--38, 2025.

\bibitem{lieder2021convergence}
F.~Lieder.
\newblock On the convergence rate of the halpern-iteration.
\newblock {\em Optim. Letters}, 15(2):405--418, 2021.

\bibitem{lv2026preconditioned}
F.~Lv and Q.-L. Dong.
\newblock Preconditioned {H}alpern iteration with adaptive anchoring parameters
  and an acceleration to {C}hambolle--{P}ock algorithm.
\newblock {\em Numerical Algorithms}, pages 1--30, 2026.

\bibitem{malitsky2019golden}
Y.~Malitsky.
\newblock Golden ratio algorithms for variational inequalities.
\newblock {\em Math. Program.}, 184(1--2):383--410, 2020.

\bibitem{malitsky2020adaptive}
Y.~Malitsky and K.~Mishchenko.
\newblock Adaptive gradient descent without descent.
\newblock In {\em International Conference on Machine Learning}, pages
  6702--6712. PMLR, 2020.

\bibitem{malitsky2016first}
Y.~Malitsky and T.~Pock.
\newblock A first-order primal-dual algorithm with linesearch.
\newblock {\em arXiv preprint arXiv:1608.08883}, 2016.

\bibitem{malitsky2020forward}
Y.~Malitsky and M.~K. Tam.
\newblock A forward-backward splitting method for monotone inclusions without
  cocoercivity.
\newblock {\em SIAM J. Optim.}, 30(2):1451--1472, 2020.

\bibitem{mann1953mean}
W.~R. Mann.
\newblock Mean value methods in iteration.
\newblock {\em Proceedings of the American Mathematical Society},
  4(3):506--510, 1953.

\bibitem{ogwo2025inertial}
G.~N. Ogwo, B.~Zinsou, H.~A. Abass, and O.~K. Oyewole.
\newblock Inertial {H}alpern-type {T}seng’s method for approximating a
  solution to monotone inclusion problems with fixed point constraint.
\newblock {\em Rendiconti del Circolo Matematico di Palermo Series 2},
  74(1):61, 2025.

\bibitem{orabona2016coin}
F.~Orabona and D.~P{\'a}l.
\newblock Coin betting and parameter-free online learning.
\newblock {\em Advances in Neural Information Processing Systems}, 29, 2016.

\bibitem{oyewole2022totally}
O.~K. Oyewole and S.~Reich.
\newblock A totally relaxed self-adaptive algorithm for solving a variational
  inequality and fixed point problems in banach spaces.
\newblock {\em Appl Set-Valued Anal Optim}, 4:349--366, 2022.

\bibitem{park2022exact}
J.~Park and E.~K. Ryu.
\newblock Exact optimal accelerated complexity for fixed-point iterations.
\newblock In {\em International Conference on Machine Learning}, pages
  17420--17457. PMLR, 2022.

\bibitem{sabach2017first}
S.~Sabach and S.~Shtern.
\newblock A first order method for solving convex bilevel optimization
  problems.
\newblock {\em SIAM J. Optim.}, 27(2):640--660, 2017.

\bibitem{shen2026parameter}
L.~Shen and F.~K{\i}l{\i}n{\c{c}}-Karzan.
\newblock Parameter-free non-ergodic extragradient algorithms for solving
  monotone variational inequalities.
\newblock {\em arXiv preprint arXiv:2604.07662}, 2026.

\bibitem{tan2022self}
B.~Tan and X.~Qin.
\newblock Self adaptive viscosity-type inertial extragradient algorithms for
  solving variational inequalities with applications.
\newblock {\em Mathematical Modelling and Analysis}, 27(1):41--58, 2022.

\bibitem{tran2022connection}
Q.~Tran-Dinh.
\newblock From {H}alpern's fixed-point iterations to {N}esterov's accelerated
  interpretations for root-finding problems.
\newblock {\em Comput. Optim. Appl.}, 87(1):181--218, 2024.

\end{thebibliography}
